\documentclass[11pt]{article}

\usepackage[colorlinks,linkcolor=darkblue,
  citecolor=blue,urlcolor=blue]{hyperref}

\usepackage{jcd}
\usepackage{fullpage}
\usepackage[T1]{fontenc}
\usepackage{macros-zo}

\renewcommand{\theassumption}{\Alph{assumption}} % Set assumptions to have
                                                 % letters
%% \providecommand{\theHassumption}{assumption}
%% \providecommand{\theHlemma}{lemma}
%% \providecommand{\theHproposition}{proposition}

\usepackage[numbers]{natbib}

\definecolor{darkblue}{rgb}{0,0,.5}

\begin{document}

\begin{center}
  {\LARGE{\bf{
        Optimal rates for zero-order convex optimization: \\
        the power of two function evaluations}}} \\

  \vspace{.5cm}
  {\large John C.\ Duchi$^\dagger$ ~~~ Michael I.\ Jordan$^{\ast}$ ~~~
    Martin J.\ Wainwright$^\ast$ ~~~ Andre Wibisono$^{\ast}$}
 \vspace{.2cm}
 \texttt{jduchi@stanford.edu} ~~
  \texttt{\{jordan,wainwrig,wibisono\}@berkeley.edu} \\
  \vspace{.2cm}
  {\large $^\dagger$Stanford University 
    and $^{\ast}$University of California, Berkeley} \\
%%   \vspace{.1cm}
%%   {\large $^2$Department of Electrical Engineering and
%%   Computer Science
%%   ~~~~ $^3$Department of Statistics} \\
  \vspace{.1cm}
  August 2014\footnote{An extended abstract of this work was presented
    at Neural Information Processing Systems (NIPS 2012)~\cite{DuchiJoWaWi12}.
    This newer work contains results on non-smooth optimization,
    uses a different argument for lower bounds that corrects errors in the
    conference version, and provides several new lower and upper bounds.}
\end{center}

\vspace*{.2cm}

\begin{abstract}
  We consider derivative-free algorithms for stochastic and
  non-stochastic convex optimization problems that use only function values
  rather than gradients.  Focusing on non-asymptotic bounds on
  convergence rates, we show that if pairs of function values are
  available, algorithms for $d$-dimensional optimization that
  use gradient estimates based on random perturbations suffer a factor
  of at most $\sqrt{d}$ in convergence rate over traditional
  stochastic gradient methods.  We establish such results for both
  smooth and non-smooth cases, sharpening previous analyses that
  suggested a worse dimension dependence, and extend our
  results to the case of multiple ($\numobs \ge 2$) evaluations.
  We complement our
  algorithmic development with information-theoretic lower bounds on
  the minimax convergence rate of such problems, establishing the
  sharpness of our achievable results up to constant (sometimes
  logarithmic) factors.
\end{abstract}

\section{Introduction}

Derivative-free optimization schemes have a long history in optimization; for
instance, see the book by~\citet{Spall03} for an overview.  Such procedures
are desirable in settings in which explicit gradient calculations may be
computationally infeasible, expensive, or impossible.  Classical techniques in
stochastic and non-stochastic optimization, including Kiefer-Wolfowitz-type
procedures~\cite[e.g.][]{KushnerYi03}, use function difference information to
approximate gradients of the function to be minimized rather than calculating
gradients. There has recently been renewed interest in optimization problems
with only functional (zero-order) information available---rather than
first-order gradient information---in optimization, machine learning, and
statistics~\cite{FlaxmanKaMc05,AgarwalDeXi10,Nesterov11,GhadimiLa13,
  AgarwalFoHsKaRa13}.

In machine learning and statistics, this interest has centered around bandit
optimization~\cite{FlaxmanKaMc05, BartlettDaHaKaRaTe08, AgarwalDeXi10}, where
a player and adversary compete, with the player choosing points $\optvar$ in
some domain $\optdomain$ and an adversary choosing a point $\statval$, forcing
the player to suffer a loss $F(\optvar; \statval)$.  The goal is to choose an
optimal point $\optvar \in \optdomain$ based only on observations of function
values $F(\optvar; \statval)$.  Applications of such bandit problems include
online auctions and advertisement selection for search engines.  Similarly,
the field of simulation-based optimization provides many examples of problems
in which optimization is performed based only on function
values~\cite{Spall03, ConnScVi09, Nesterov11}.  Additionally,
in many problems
in statistics---including graphical model inference~\cite{WainwrightJo08} and
structured-prediction~\cite{Taskar05}---the objective is defined
variationally (as the maximum of a family of functions), so explicit
differentiation may be difficult.

Despite the long history and recent renewed interest in such
procedures, a precise understanding of their convergence behavior
remains elusive.  In this paper, we study algorithms for solving
stochastic convex optimization problems of the form
\begin{equation}
  \label{eqn:objective}
  \minimize_{\optvar \in \optdomain} f(\optvar) \defeq
  \E_\statprob[F(\optvar; \statrv)] = \int_\statsamplespace F(\optvar;
  \statval) d\statprob(\statval),
\end{equation}
where $\optdomain \subseteq \R^\dim$ is a compact convex set,
$\statprob$ is a distribution over the space $\statsamplespace$, and
for $\statprob$-almost every $\statval \in \statsamplespace$, the
function $F(\cdot; \statval)$ is closed and convex.  We focus on the
convergence rates of algorithms observing only stochastic
realizations of the function values $f(\optvar)$, though our
algorithms naturally apply in the non-stochastic case as well.

One related body of work focuses on problems where, for a given value
$\statval \in \statsamplespace$ (or sample $\statrv \sim \statprob$),
it is only possible to observe $F(\optvar; \statval)$ at a single location
$\optvar$.  \citet[Chapter 9.3]{NemirovskiYu83} develop a randomized
sampling strategy that estimates the gradient $\nabla F(\optvar;
\statval)$ via randomized evaluations of function values at points $\optvar$
on the surface of an $\ell_2$-sphere.  \mbox{\citet{FlaxmanKaMc05}}
build on this approach and establish some implications for
bandit convex optimization problems.  The convergence rates given in
these early papers are sub-optimal, as more recent
work shows~\cite{AgarwalFoHsKaRa13}.  For instance,
\mbox{\citet{AgarwalFoHsKaRa13}} provide algorithms that achieve
convergence rates after $\totaliter$ iterations of $\order(\dim^{16} /
\sqrt{\totaliter})$; however, as the authors themselves note, the
algorithms are quite complicated. \mbox{\citet{JamiesonNoRe12}}
present simpler comparison-based algorithms for solving a
subclass of such
problems, and \citet{Shamir13} gives optimal algorithms for
quadratic objectives, as well as providing some lower bounds on
optimization error when only single function values are available.

Some of the difficulties inherent in optimization using only a single
function evaluation are alleviated when the function $F(\cdot;
\statval)$ can be evaluated at \emph{two} points, as noted
independently by \citet{AgarwalDeXi10} and \citet{Nesterov11}. Such
multi-point settings prove useful for optimization problems in
which observations $\statrv$ are available, yet we only have black-box
access to objective values $F(\optvar; \statrv)$; examples of such
problems include simulation-based
optimization~\cite{Nesterov11,ConnScVi09} and variational approaches
to graphical models and classification~\cite{Taskar05,WainwrightJo08}.
The essential insight underlying multi-point schemes is as follows:
for small non-zero scalar $\smoothparam$ and a vector $\perturbrv \in
\R^\dim$, the quantity $(F(\optvar + \smoothparam \perturbrv;
\statval) - F(\optvar; \statval)) / \smoothparam$ approximates a
directional derivative of $F(\optvar; \statval)$ in the direction
$\perturbrv$ that first-order schemes may exploit.
Relative to schemes based on only a single
function evaluation at each iteration, such two-sample-based gradient
estimators exhibit faster convergence rates~\cite{AgarwalDeXi10,
  Nesterov11, GhadimiLa13}.  In the current paper, we take this line
of work further, in particular by characterizing \emph{optimal}
rates of convergence over all procedures based on multiple noisy
function evaluations.  Moreover, adopting the two-point perspective,
we present simple randomization-based
algorithms that achieve these optimal rates.

More formally, we study algorithms that receive a vector of paired
observations, $\obs(\optvar, \altoptvar) \in \R^2$, where $\optvar$
and $\altoptvar$ are points selected by the algorithm. The
$\iter^{\rm th}$ observation takes the form
\begin{equation}
  \label{eqn:observation-model}
  \obs[\iter](\optvar^\iter, \altoptvar^\iter) \defeq \left[\begin{matrix}
      F(\optvar^\iter; \statrv[\iter]) \\ F(\altoptvar^\iter; \statrv[\iter])
    \end{matrix} \right],
\end{equation}
where $\statrv[\iter]$ is an independent sample drawn from the distribution
$\statprob$. After $\totaliter$ iterations, the algorithm returns a vector
$\what{\optvar}(\totaliter) \in \optdomain$.  In this setting, we analyze
stochastic gradient and mirror-descent \mbox{procedures~\cite{Zinkevich03,
    NemirovskiYu83, BeckTe03, NemirovskiJuLaSh09}} that construct gradient
estimators using the two-point observations $\obs[\iter]$ (as well as the
natural extension to $\numobs \ge 2$ observations).  By a careful analysis of
the dimension dependence of certain random perturbation schemes, we show that
the convergence rate attained by our stochastic gradient methods is roughly a
factor of $\sqrt{\dim}$ worse than that attained by stochastic methods that
observe the full gradient $\nabla F(\optvar; \statrv)$. Under appropriate
conditions, our convergence rates are a factor of $\sqrt{\dim}$ better than
those attained in past work~\cite{AgarwalDeXi10,Nesterov11}.  For smooth
problems, \mbox{\citet{GhadimiLa13}} provide results sharper than those in the
papers~\cite{AgarwalDeXi10,Nesterov11}, but do not show optimality of their
methods nor consider non-Euclidean problems.  In addition, although we present
our results in the framework of stochastic optimization, our analysis also
applies to (multi-point) bandit online convex optimization
problems~\cite{FlaxmanKaMc05, BartlettDaHaKaRaTe08, AgarwalDeXi10}, where our
results are the sharpest provided to date.  Our algorithms apply in both
smooth and non-smooth cases as well as to non-stochastic
problems~\cite{NemirovskiYu83,Nesterov11}, where our procedures give the
fastest known convergence guarantees for the non-smooth case.  Finally, by
using information-theoretic techniques for proving lower bounds in statistical
estimation, we establish that our explicit achievable rates are sharp up to
constant factors or (in some cases) factors at most logarithmic in the
dimension.

The remainder of this paper is organized as follows: in the next
section, we present our multi-point gradient estimators and
their convergence rates, providing results in
Section~\ref{sec:smooth-case} and~\ref{sec:nonsmooth-case} for smooth
and non-smooth objectives $F$, respectively. In
Section~\ref{sec:lower-bounds}, we provide information-theoretic
minimax lower bounds on
the best possible convergence rates, uniformly over all schemes based on
function evaluations.  We devote
Sections~\ref{sec:convergence-proofs} and
Section~\ref{sec:lower-bound-proofs} to proofs of the
achievable convergence rates and the lower bounds, respectively, deferring
more technical arguments to appendices.

\paragraph{Notation}
For sequences indexed by $d$, the inequality $a_d \lesssim b_d$
indicates that there is a universal numerical constant $c$ such that
$a_d \le c \cdot b_d$. For a convex function $f : \R^d \to \R$, we let
\begin{equation*}
  \partial f(\optvar) \defeq \{g \in \R^d \mid f(\altoptvar) \ge f(\optvar)
  + \<g, \altoptvar - \optvar\>, ~ \mbox{for~all~}
  \altoptvar \in \R^d\}
\end{equation*}
denote the subgradient set of $f$ at $\optvar$. We say a function $f$
is $\lambda$-strongly convex with respect to the norm $\norm{\cdot}$
if for all $\optvar, \altoptvar \in \R^d$, we have $f(\altoptvar) \ge
f(\optvar) + \<g, \altoptvar - \optvar\> + (\lambda / 2) \norm{\optvar
  - \altoptvar}^2$ for all $g \in \partial f(\optvar)$. Given a norm
$\norm{\cdot}$, we denote its dual norm by $\dnorm{\cdot}$.  We let
$\normal(0,I_{\dim \times \dim})$ denote the standard normal
distribution on $\R^\dim$.  We denote the $\ell_2$-ball in $\R^\dim$
with radius $r$ centered at $v$ by $\B^\dim(v,r)$, and
$\S^{\dim-1}(v,r)$ denotes the $(\dim-1)$-dimensional $\ell_2$-sphere
in $\R^\dim$ with radius $r$ centered at $v$.  We also use the
shorthands $\B^\dim = \B^\dim(0,1)$ and $\S^{\dim-1} =
\S^{\dim-1}(0,1)$, and $\onevec$ for the all-ones vector.

% Local Variables:
% TeX-master: "zero-order"
% End:

\section{Algorithms}
\label{sec:algorithms}

We begin by providing some background on the class of stochastic
mirror descent methods for solving the problem $\min_{\optvar \in
  \optdomain} f(\theta)$.  They are based on a \emph{proximal
  function} $\prox$, meaning a differentiable and strongly convex
function defined over $\optdomain$.  The proximal function defines a
Bregman divergence $\divergence: \optdomain \times \optdomain
\rightarrow \R_+$ via
\begin{equation*}
  \divergence(\optvar, \altoptvar) \defeq \prox(\optvar) -
  \prox(\altoptvar) - \<\nabla \prox(\altoptvar), \optvar -
  \altoptvar\>.
\end{equation*}
The mirror descent (MD) method generates a sequence of iterates
$\{\optvar[\iter]\}_{\iter=1}^\infty$ contained in $\optdomain$, using
stochastic gradient information to perform the update from iterate to
iterate. The algorithm is initialized at some point $\optvar[1]
\in \optdomain$.  At iterations $\iter = 1, 2, 3, \ldots$, the MD
method receives a (subgradient) vector $\subgrad[\iter] \in \R^d$,
which it uses to compute the next iterate via
\begin{equation}
  \label{eqn:mirror-descent-update}
  \optvar[\iter + 1] = \argmin_{\optvar \in \optdomain} \left\{
  \<\subgrad[\iter], \optvar\> + \frac{1}{\stepsize[\iter]}
  \divergence(\optvar, \optvar[\iter])\right\},
\end{equation}
where $\{\stepsize[\iter]\}_{\iter=1}^\infty$ is a non-increasing
sequence of positive stepsizes.

Throughout the paper, we impose two assumptions that are standard
in analysis of mirror descent methods~\cite{NemirovskiYu83, BeckTe03,
  NemirovskiJuLaSh09}.  Letting $\optvar^*$ denote a minimizer of the
problem~\eqref{eqn:objective}, the first assumption concerns
properties of the proximal function $\prox$ and the optimizaton domain
$\optdomain$.
\begin{assumption}
  \label{assumption:prox}
  The proximal function $\prox$ is $1$-strongly convex with respect to the
  norm $\norm{\cdot}$. The domain $\optdomain$ is compact, and there
  exists $\radius < \infty$ such that $\divergence(\optvar^*, \optvar)
  \le \half \radius^2$ for $\optvar \in \optdomain$.
\end{assumption}
\noindent
Our second assumption is standard for almost all first-order stochastic
gradient methods~\cite{NemirovskiJuLaSh09, Xiao10, Nesterov11}, and it holds
whenever the functions $F(\cdot; \statval)$ are $\lipobj$-Lipschitz with
respect to the norm $\norm{\cdot}$. We use $\dnorm{\cdot}$ to denote the dual
norm to $\norm{\cdot}$, and let $\gradfunc : \optdomain
\times \statsamplespace \rightarrow \R^d$ denote a measurable subgradient
selection for the functions $F$; that is, $\gradfunc(\optvar; \statval) \in
\partial F(\optvar; \statval)$ with $\E[\gradfunc(\optvar; \statrv)] \in
\partial f(\optvar)$.
\newcounter{saveassumption}
\setcounter{saveassumption}{\value{assumption}}
\begin{assumption}
  \label{assumption:lipschitz-objective}
  There is a constant $\lipobj < \infty$ such that the (sub)gradient selection
  $\gradfunc$ satisfies $\E[\dnorm{\gradfunc(\optvar; \statrv)}^2] \le
  \lipobj^2$ for $\optvar \in \optdomain$.
\end{assumption}
\noindent
When Assumptions~\ref{assumption:prox}
and~\ref{assumption:lipschitz-objective} hold, the convergence rates of
stochastic mirror descent methods are well understood.  In detail,
suppose that the variables $\statrv[\iter] \in \statsamplespace$ are
sampled i.i.d.\ according to $\statprob$.  With the assignment
$\subgrad[\iter] = \gradfunc(\optvar[\iter];
\statrv[\iter])$, let the sequence
$\{\optvar[\iter]\}_{\iter=1}^\infty$ be generated by the mirror
descent iteration~\eqref{eqn:mirror-descent-update}. Then for a
stepsize $\stepsize[\iter] = \stepsize / \sqrt{\iter}$, the
average $\what{\optvar}(\totaliter) = \frac{1}{\totaliter} \sum_{\iter
  = 1}^\totaliter \optvar[\iter]$ satisfies
\begin{equation}
  \label{eqn:simple-mirror-descent-convergence}
  \E[f(\what{\optvar}(\totaliter))] - f(\optvar^*)
  \le \frac{1}{2 \stepsize \sqrt{\totaliter}} \radius^2
  + \frac{\stepsize}{\sqrt{\totaliter}} \lipobj^2.
\end{equation}
We refer to the
papers~\cite[Section 2.3]{BeckTe03,NemirovskiJuLaSh09} for results of
this type.

For the remainder of this section, we explore the use of function
difference information to obtain subgradient estimates that can be
used in mirror descent methods to achieve statements similar to the
convergence guarantee~\eqref{eqn:simple-mirror-descent-convergence}.
We begin by analyzing the smooth case---when the instantaneous
functions $F(\cdot; \statval)$ have Lipschitz gradients---and proceed
to the more general (non-smooth) case in the subsequent section.

%------------------------------------
\subsection{Two-point gradient estimates and convergence rates: smooth case}
\label{sec:smooth-case}

Our first step is to show how to use two function values to construct nearly
unbiased estimators of the gradient of the objective function $f$
under a smoothness condition.  Using analytic methods different from
those from past work~\cite{AgarwalDeXi10,Nesterov11}, we are able to
obtain optimal dependence with the problem dimension $d$.  In more
detail, our procedure is based on a non-increasing sequence of
positive smoothing parameters
$\{\smoothparam_\iter\}_{\iter=1}^\infty$ and a distribution
$\smoothingdist$ on $\R^d$, to be specified, satisfying
$\E_\smoothingdist[\perturbrv \perturbrv^\top] = I$.  Given a
smoothing constant $\smoothparam$, vector $\perturbval$, and
observation $\statval$, we define the directional gradient estimate at
the point $\optvar$ as
\begin{equation}
  \label{eqn:smooth-gradient-estimate}
  \gradestsmooth(\optvar; \smoothparam, \perturbval, \statval)
  \defeq \frac{F(\optvar + \smoothparam \perturbval; \statval)
    - F(\optvar; \statval)}{\smoothparam} \perturbval.
\end{equation}
Using the estimator~\eqref{eqn:smooth-gradient-estimate}, we then
perform the following two steps.  First, upon receiving the point
$\statrv[\iter] \in \statsamplespace$, we sample an independent vector
$\perturbrv[\iter]$ from $\smoothingdist$ and set
\begin{equation}
  \label{eqn:gradient-estimator}
  \subgrad[\iter] =
  \gradestsmooth(\optvar[\iter]; \smoothparam_\iter,
  \perturbrv[\iter], \statrv[\iter]) =
  \frac{ F(\optvar[\iter] + \smoothparam_\iter
    \perturbrv[\iter]; \statrv[\iter]) - F(\optvar[\iter];
    \statrv[\iter])}{\smoothparam_\iter} \perturbrv[\iter].
\end{equation}
In the second step, we apply the mirror descent
update~\eqref{eqn:mirror-descent-update} to the quantity
$\subgrad[\iter]$ to obtain the next parameter $\optvar[\iter+1]$.

Intuition for the estimator~\eqref{eqn:smooth-gradient-estimate}
follows by considering directional derivatives.  The directional
derivative $f'(\optvar, z)$ of the function $f$ at the point $\optvar$
in the direction $z$ is
\begin{align*}
  f'(\optvar, z) \defeq \lim_{\smoothparam \downarrow 0}
  \frac{1}{\smoothparam} (f(\optvar + \smoothparam z) - f(\optvar)).
\end{align*}
This limit always exists when $f$ is convex~\cite[Chapter
  VI]{HiriartUrrutyLe96ab}, and if $f$ is differentiable at $\optvar$,
then \mbox{$f'(\optvar, z) = \<\nabla f(\optvar), z\>$.}  With this
background, the estimate~\eqref{eqn:smooth-gradient-estimate} is
motivated by the following fact~\cite[equation~(32)]{Nesterov11}:
whenever $\nabla f(\optvar)$ exists, we have
\begin{equation*}
  \E[f'(\optvar, \perturbrv)\perturbrv]
  = \E[\<\nabla f(\optvar), \perturbrv\> \perturbrv]
  = \E[\perturbrv \perturbrv^\top \nabla f(\optvar)]
  = \nabla f(\optvar),
\end{equation*}
where the final equality uses our assumption that $\E[\perturbrv
  \perturbrv^\top] = I$.  Consequently, given sufficiently small
choices of $\smoothparam_\iter$, the
vector~\eqref{eqn:gradient-estimator} should be a nearly unbiased
estimator of the gradient $\nabla f(\optvar[\iter])$.

In addition to the unbiasedness condition $\E_\smoothingdist[\perturbrv
  \perturbrv^\top] = I$, we require a few additional assumptions on
$\smoothingdist$. The first ensures that the estimator $\subgrad[\iter]$ is
well-defined.
\begin{assumption}
  \label{assumption:support-condition}
  The domain of the functions $F$ and support of $\smoothingdist$ satisfy
  \begin{equation}
    \label{eqn:support-condition}
    \dom F(\cdot; \statval) \supset \optdomain + \smoothparam_1
    \supp \smoothingdist
    ~~~ \mbox{for}~ \statval \in \statsamplespace.
  \end{equation}
\end{assumption}
\noindent
If we apply smoothing with Gaussian perturbation, the
containment~\eqref{eqn:support-condition} implies $\dom F(\cdot;
\statval) = \R^d$, though we still optimize over the compact set
$\optdomain$ in the update~\eqref{eqn:mirror-descent-update}.
We remark in passing
that if the condition~\eqref{eqn:support-condition} fails,
it is possible to optimize instead over the smaller domain
$(1 - \epsilon) \optdomain$, assuming w.l.o.g.\ that
$\optdomain$ has non-empty interior, so long as $\smoothingdist$ has compact
support (cf.~\citet[Algorithm 2]{AgarwalDeXi10}).
We also
impose the following properties on the smoothing distribution:
\begin{assumption}
  \label{assumption:smoothing-dist}
  For $\perturbrv \sim \smoothingdist$, the quantity $\bigznorm \defeq
  \sqrt{\E[\norm{\perturbrv}^4 \dnorm{\perturbrv}^2]}$ is finite, and
  moreover, there is a function $\sampledim: \mathbb{N} \rightarrow
  \R_+$ such that
  \begin{align}
    \label{EqnJohn}
    \E[\dnorm{\<g, \perturbrv\> \perturbrv}^2] \le \sampledim(\dim)
    \dnorm{g}^2 \quad \mbox{for any vector $g \in \R^\dim$.}
  \end{align}
\end{assumption}
\noindent
Although the quantity $\bigznorm$ is required to be finite, its value
does not appear explicitly in our theorem statements.  On the other
hand, the dimension-dependent quantity $\sampledim(\dim)$ from
condition~\eqref{EqnJohn} appears explicitly in our convergence rates.
As an example of these two quantities, suppose that we take
$\smoothingdist$ to be the distribution of the standard normal
$\normal(0, I_{\dim \times \dim})$, and use the $\ell_2$-norm
$\norm{\cdot} = \ltwo{\cdot}$.  In this case, a straightfoward
calculation shows that $\bigznorm^2 \lesssim \dim^3$ and
$\sampledim(\dim) \lesssim \dim$.

Finally, as previously stated, the analysis of this section
requires a smoothness assumption:
\begin{assumption}
  \label{assumption:random-function-smoothness}
  There is a function $\lipgrad : \statsamplespace \rightarrow \R_+$
  such that for $\statprob$-almost every $\statval
  \in \statsamplespace$, the function $F(\cdot; \statval)$ has
  $\lipgrad(\statval)$-Lipschitz continuous gradient with respect to the
  norm $\norm{\cdot}$, and moreover the quantity $\lipgrad(\statprob)
  \defeq \sqrt{\E[ (\lipgrad(\statrv))^2]}$ is finite.
\end{assumption}
%% \mjwcomment{This feels weird to me: why state a very technical
%%   auxiliary lemma prior to the theorem statement?  However, I have
%%   left it here for the time-being, perhaps you had some particular
%%   motivation in mind?}
%

Essential to stochastic gradient procedures---recall
Assumption~\ref{assumption:lipschitz-objective} and the
result~\eqref{eqn:simple-mirror-descent-convergence}---is that the gradient
estimator $\subgrad[\iter]$ be nearly unbiased and have small norm.
Accordingly, the following lemma provides quantitative guarantees on the
error associated with the gradient
estimator~\eqref{eqn:smooth-gradient-estimate}.

\begin{lemma}
  \label{lemma:difference-to-gradient}
  Under Assumptions~\ref{assumption:smoothing-dist}
  and~\ref{assumption:random-function-smoothness}, the gradient
  estimate~\eqref{eqn:smooth-gradient-estimate}
  has expectation
  \begin{equation}
    \label{eqn:first-mean}
    \E[\gradestsmooth(\optvar; \smoothparam, \perturbrv, \statrv)]
    = \nabla
    f(\optvar) + \smoothparam \lipgrad(\statprob)
    \direrrvec(\optvar, \smoothparam)
  \end{equation}
  for a vector $\direrrvec = \direrrvec(\optvar, \smoothparam)$
  such that $\dnorm{\direrrvec} \le \half
  \E[\norm{\perturbrv}^2\dnorm{\perturbrv}]$.  Its expected
  squared norm has the bound
  \begin{equation}
    \label{eqn:first-variance}
    \E[\dnorm{\gradestsmooth(\optvar; \smoothparam, \perturbrv,
        \statrv)}^2] \le 2 \sampledim(\dim)
    \E\left[\dnorm{\gradfunc(\optvar; \statrv)}^2\right] + \half
    \smoothparam^2 \lipgrad(\statprob)^2 \bigznorm^2.
%%     \E[\dnorm{\subgrad[\iter]}^2 \mid \mc{F}_{\iter-1}] \le 2
%%     \sampledim(\dim) \E\left[\dnorm{\gradfunc(\optvar[\iter];
%%         \statrv)}^2 \mid \mc{F}_{\iter-1} \right] + \half
%%     \smoothparam_\iter^2 \lipgrad(\statprob)^2 \bigznorm^2.
  \end{equation}
\end{lemma}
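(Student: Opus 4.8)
The plan is to Taylor-expand $F(\optvar + \smoothparam \perturbval; \statval)$ about $\optvar$ using the gradient-Lipschitz Assumption~\ref{assumption:random-function-smoothness}, peeling off a linear term that reproduces $\nabla f(\optvar)$ after averaging and a quadratic remainder of size $O(\smoothparam)$. Fixing $\statval$ and $\perturbval$, the standard two-sided consequence of $\lipgrad(\statval)$-Lipschitz continuity of $\nabla F(\cdot;\statval)$ with respect to $\norm{\cdot}$ is
\[
  F(\optvar + \smoothparam \perturbval; \statval) = F(\optvar; \statval)
  + \smoothparam \<\nabla F(\optvar; \statval), \perturbval\>
  + \smoothparam^2 r(\statval, \perturbval),
  \qquad |r(\statval, \perturbval)| \le \half \lipgrad(\statval) \norm{\perturbval}^2 .
\]
Dividing by $\smoothparam$ and multiplying by $\perturbval$ yields the pointwise identity
\[
  \gradestsmooth(\optvar; \smoothparam, \perturbval, \statval)
  = \<\nabla F(\optvar; \statval), \perturbval\> \perturbval
  + \smoothparam\, r(\statval, \perturbval)\, \perturbval ,
\]
which drives both parts of the lemma.

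For the mean~\eqref{eqn:first-mean}, take expectations over the independent pair $(\perturbrv, \statrv)$. By independence and $\E_\smoothingdist[\perturbrv \perturbrv^\top] = I$, the first term has expectation $\E[\perturbrv \perturbrv^\top]\,\E[\nabla F(\optvar;\statrv)] = \nabla f(\optvar)$, where $\E[\nabla F(\optvar;\statrv)] = \nabla f(\optvar)$ follows from the defining property of the subgradient selection $\gradfunc$, which in the smooth case coincides with $\nabla F$. Define the remainder vector through $\smoothparam \lipgrad(\statprob)\, \direrrvec(\optvar,\smoothparam) \defeq \E[\smoothparam\, r(\statrv,\perturbrv)\, \perturbrv]$, i.e.\ $\direrrvec = \lipgrad(\statprob)^{-1}\, \E[r(\statrv,\perturbrv)\, \perturbrv]$. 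Then the triangle inequality for $\dnorm{\cdot}$, the bound on $|r|$, and independence give
\[
  \dnorm{\direrrvec}
  \le \frac{1}{\lipgrad(\statprob)} \, \E\left[ |r(\statrv,\perturbrv)|\, \dnorm{\perturbrv} \right]
  \le \frac{\E[\lipgrad(\statrv)]}{2\,\lipgrad(\statprob)} \, \E\left[ \norm{\perturbrv}^2 \dnorm{\perturbrv} \right]
  \le \half \, \E\left[ \norm{\perturbrv}^2 \dnorm{\perturbrv} \right] ,
\]
the last inequality using $\E[\lipgrad(\statrv)] \le \sqrt{\E[\lipgrad(\statrv)^2]} = \lipgrad(\statprob)$ by Cauchy--Schwarz.

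For the second moment~\eqref{eqn:first-variance}, apply $\dnorm{a+b}^2 \le 2\dnorm{a}^2 + 2\dnorm{b}^2$ to the pointwise identity:
\[
  \dnorm{\gradestsmooth(\optvar;\smoothparam,\perturbrv,\statrv)}^2
  \le 2\, \dnorm{\<\nabla F(\optvar;\statrv),\perturbrv\>\perturbrv}^2
  + 2\, \smoothparam^2\, r(\statrv,\perturbrv)^2\, \dnorm{\perturbrv}^2 .
\]
Conditioning on $\statrv$ and invoking~\eqref{EqnJohn} with $g = \nabla F(\optvar;\statrv)$ bounds the first term in expectation by $2\,\sampledim(\dim)\,\E[\dnorm{\nabla F(\optvar;\statrv)}^2] = 2\,\sampledim(\dim)\,\E[\dnorm{\gradfunc(\optvar;\statrv)}^2]$. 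For the second term, $r(\statrv,\perturbrv)^2 \le \frac14 \lipgrad(\statrv)^2 \norm{\perturbrv}^4$ and independence give $2\smoothparam^2 \cdot \frac14\, \E[\lipgrad(\statrv)^2]\, \E[\norm{\perturbrv}^4 \dnorm{\perturbrv}^2] = \half \smoothparam^2 \lipgrad(\statprob)^2 \bigznorm^2$, using $\lipgrad(\statprob)^2 = \E[\lipgrad(\statrv)^2]$ and $\bigznorm^2 = \E[\norm{\perturbrv}^4 \dnorm{\perturbrv}^2]$. Adding the two contributions proves~\eqref{eqn:first-variance}.

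Most of this is bookkeeping with the triangle and Cauchy--Schwarz inequalities together with the definitions of $\sampledim$ and $\bigznorm$. The one point requiring genuine care is the two-sided quadratic Taylor bound when gradient-Lipschitzness is measured in the possibly non-Euclidean norm $\norm{\cdot}$ (and the accompanying exchange of expectation and differentiation yielding $\nabla f = \E[\nabla F(\cdot;\statrv)]$, justified by finiteness of $\lipgrad(\statprob)$); this is where Assumption~\ref{assumption:random-function-smoothness} is actually used, while Assumption~\ref{assumption:smoothing-dist} enters only through~\eqref{EqnJohn} and the finiteness of $\bigznorm$.
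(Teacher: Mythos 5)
Your proof is correct and follows essentially the same route as the paper's: a pointwise quadratic expansion of $F(\cdot;\statval)$ justified by the $\lipgrad(\statval)$-Lipschitz gradient condition, a split into the $\<\nabla F,\perturbrv\>\perturbrv$ term plus an $O(\smoothparam)$ remainder, and then independence plus Cauchy--Schwarz/Jensen to integrate out $\statrv$ and $\perturbrv$. The only cosmetic difference is that you phrase the remainder as an explicit scalar $r(\statval,\perturbval)\in[0,\tfrac12\lipgrad(\statval)\norm{\perturbval}^2]$ and bound $\E[\lipgrad(\statrv)]\le\lipgrad(\statprob)$ by Jensen, whereas the paper keeps the remainder as a $[0,1]$-valued multiplier $\direrr$ and applies Cauchy--Schwarz to the product $\lipgrad(\statrv)\dnorm{\direrrvec}$; these are interchangeable.
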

\noindent 
See Section~\ref{appendix:proof-difference-to-gradient} for the proof.  The
bound~\eqref{eqn:first-mean} shows that the estimator $\subgrad[\iter]$ is
unbiased for the gradient up to a correction term of order
$\smoothparam_\iter$, while the second inequality~\eqref{eqn:first-variance}
shows that the second moment is---up to an order $\smoothparam_\iter^2$
correction---within a factor $\sampledim(\dim)$ of the standard second
moment $\E[\dnorm{\gradfunc(\optvar; \statrv)}^2]$.
We note in passing that the parameter $\smoothparam$ in the lemma
can be taken arbitrarily close to 0, which only makes $\gradestsmooth$ 
a better estimate of $\gradfunc$. The intuition is straightforward: with two
points, we can obtain arbitrarily accurate estimates of the directional
derivative.

Our main result in this section is the following theorem on the
convergence rate of the mirror descent method using the gradient
estimator~\eqref{eqn:gradient-estimator}.
\begin{theorem}
  \label{theorem:main-convergence}
  Under Assumptions~\ref{assumption:prox},
  \ref{assumption:lipschitz-objective}, \ref{assumption:support-condition},
  \ref{assumption:smoothing-dist},
  and~\ref{assumption:random-function-smoothness}, consider a sequence
  $\{\optvar[\iter]\}$ generated according to the mirror
  descent update~\eqref{eqn:mirror-descent-update} using the gradient
  estimator~\eqref{eqn:gradient-estimator}, with step and perturbation
  sizes
  \begin{equation*}
    \stepsize[\iter] = \stepsize \frac{\radius}{2 \lipobj
      \sqrt{\sampledim(d)} \sqrt{\iter}} ~~~ \mbox{and} ~~~
    \smoothparam_\iter = \smoothparam \frac{\lipobj
      \sqrt{\sampledim(d)}}{ \lipgrad(\statprob) \bigznorm} \cdot
    \frac{1}{\iter} \qquad \mbox{for $\iter = 1, 2, \ldots$.}
  \end{equation*}
  Then for all $\totaliter$,
  \begin{equation}
    \label{eqn:main-convergence}
    \E\left[f(\what{\optvar}(\totaliter)) - f(\optvar^*)\right]
    \le 2 \frac{\radius \lipobj \sqrt{\sampledim(d)}}{\sqrt{\totaliter}}
    \max\left\{\stepsize, \stepsize^{-1}\right\}
    + \stepsize \smoothparam^2
    \frac{\radius \lipobj \sqrt{\sampledim(d)}}{\totaliter}
    + \smoothparam \frac{\radius \lipobj \sqrt{\sampledim(d)}
      \log(2\totaliter)}{\totaliter},
  \end{equation}
  where $\what{\optvar}(\totaliter) = \frac{1}{\totaliter}
  \sum_{\iter=1}^\totaliter \optvar[\iter]$, and the expectation is
  taken with respect to the samples $\statrv$ and $\perturbrv$.
\end{theorem}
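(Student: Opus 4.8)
The plan is to run the classical stochastic mirror descent argument with the two-point estimator $\subgrad[\iter]$ in place of a true stochastic subgradient, using Lemma~\ref{lemma:difference-to-gradient} to control the (small) bias and the variance of $\subgrad[\iter]$, and then to choose the schedules $\stepsize[\iter],\smoothparam_\iter$ so that the resulting bound collapses to~\eqref{eqn:main-convergence}.

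First I would record the one-step inequality for the update~\eqref{eqn:mirror-descent-update}: by first-order optimality of $\optvar[\iter+1]$ together with $1$-strong convexity of $\prox$ (Assumption~\ref{assumption:prox}), for every fixed $\optvar^*\in\optdomain$,
\[
\<\subgrad[\iter],\optvar[\iter]-\optvar^*\>\le\frac{1}{\stepsize[\iter]}\bigl(\divergence(\optvar^*,\optvar[\iter])-\divergence(\optvar^*,\optvar[\iter+1])\bigr)+\frac{\stepsize[\iter]}{2}\dnorm{\subgrad[\iter]}^2 .
\]
Summing over $\iter=1,\dots,\totaliter$, using that $\{\stepsize[\iter]\}$ is non-increasing and $\divergence(\optvar^*,\cdot)\le\half\radius^2$, the Bregman terms telescope to at most $\radius^2/(2\stepsize[\totaliter])$. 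Next I condition: letting $\mathcal F_{\iter-1}=\sigma\bigl((\statrv[s],\perturbrv[s])_{s<\iter}\bigr)$, the iterate $\optvar[\iter]$ is $\mathcal F_{\iter-1}$-measurable (and $\subgrad[\iter]$ is well defined by Assumption~\ref{assumption:support-condition} since $\smoothparam_\iter\le\smoothparam_1$), so Lemma~\ref{lemma:difference-to-gradient} applied conditionally gives $\E[\subgrad[\iter]\mid\mathcal F_{\iter-1}]=\nabla f(\optvar[\iter])+\smoothparam_\iter\lipgrad(\statprob)\direrrvec(\optvar[\iter],\smoothparam_\iter)$ with $\dnorm{\direrrvec}\le\half\E[\norm{\perturbrv}^2\dnorm{\perturbrv}]\le\half\bigznorm$ (Cauchy--Schwarz against the constant $1$), and $\E[\dnorm{\subgrad[\iter]}^2\mid\mathcal F_{\iter-1}]\le2\sampledim(\dim)\lipobj^2+\half\smoothparam_\iter^2\lipgrad(\statprob)^2\bigznorm^2$, where the variance bound invokes Assumption~\ref{assumption:lipschitz-objective}.

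Taking expectations in the summed inequality, the martingale-difference part of $\subgrad[\iter]$ has zero inner product with $\optvar[\iter]-\optvar^*$, and $\<\nabla f(\optvar[\iter]),\optvar[\iter]-\optvar^*\>\ge f(\optvar[\iter])-f(\optvar^*)$ by convexity. The bias produces a term bounded by $\smoothparam_\iter\lipgrad(\statprob)\dnorm{\direrrvec}\,\norm{\optvar[\iter]-\optvar^*}$, and here I would use $\half\norm{\optvar[\iter]-\optvar^*}^2\le\divergence(\optvar^*,\optvar[\iter])\le\half\radius^2$ to get $\norm{\optvar[\iter]-\optvar^*}\le\radius$. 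Combining and applying Jensen's inequality $\E[f(\what{\optvar}(\totaliter))]-f(\optvar^*)\le\frac{1}{\totaliter}\sum_{\iter=1}^\totaliter\bigl(\E[f(\optvar[\iter])]-f(\optvar^*)\bigr)$ yields
\[
\E[f(\what{\optvar}(\totaliter))]-f(\optvar^*)\le\frac{1}{\totaliter}\Bigl[\frac{\radius^2}{2\stepsize[\totaliter]}+\sampledim(\dim)\lipobj^2\!\sum_{\iter=1}^\totaliter\stepsize[\iter]+\tfrac14\lipgrad(\statprob)^2\bigznorm^2\!\sum_{\iter=1}^\totaliter\stepsize[\iter]\smoothparam_\iter^2+\tfrac12\radius\lipgrad(\statprob)\bigznorm\!\sum_{\iter=1}^\totaliter\smoothparam_\iter\Bigr].
\]

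The final step is pure arithmetic: substitute $\stepsize[\iter]=\stepsize\radius/(2\lipobj\sqrt{\sampledim(\dim)\iter})$ and $\smoothparam_\iter=\smoothparam\lipobj\sqrt{\sampledim(\dim)}/(\lipgrad(\statprob)\bigznorm\,\iter)$, then bound $\sum_{\iter\le\totaliter}\iter^{-1/2}\le2\sqrt{\totaliter}$, $\sum_{\iter\le\totaliter}\iter^{-1}\le1+\log\totaliter$ (and $\tfrac12(1+\log\totaliter)\le\log(2\totaliter)$), and $\sum_{\iter\ge1}\iter^{-5/2}\le2$. The first two bracket terms combine into $\radius\lipobj\sqrt{\sampledim(\dim)}\,\totaliter^{-1/2}(\stepsize+\stepsize^{-1})\le2\radius\lipobj\sqrt{\sampledim(\dim)}\,\totaliter^{-1/2}\max\{\stepsize,\stepsize^{-1}\}$; the third collapses to $\stepsize\smoothparam^2\radius\lipobj\sqrt{\sampledim(\dim)}/\totaliter$ (up to a constant $<1$); and the fourth to $\smoothparam\radius\lipobj\sqrt{\sampledim(\dim)}\log(2\totaliter)/\totaliter$, which is exactly~\eqref{eqn:main-convergence}. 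I expect the only delicate points to be the two bookkeeping facts underlying the error decomposition—namely that $\dnorm{\direrrvec}\le\half\bigznorm$ and that the stochastic fluctuation of $\subgrad[\iter]$ is a martingale difference, so its contribution vanishes in expectation; once Lemma~\ref{lemma:difference-to-gradient} is granted, everything else is the standard mirror descent computation plus a choice of constants.
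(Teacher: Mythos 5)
Your proposal reproduces the paper's own argument essentially line for line: you re-derive the standard mirror descent regret bound that the paper packages as Lemma~\ref{lemma:linear-regret}, decompose the gradient error via Lemma~\ref{lemma:difference-to-gradient} with the same conditioning on $\mathcal F_{\iter-1}$, bound $\dnorm{\direrrvec}\le\half\bigznorm$ by the same Cauchy--Schwarz step, and arrive at the same four-term intermediate bound and the same summation estimates (your $\sum_{\iter\ge1}\iter^{-5/2}\le 2$ is slightly tighter than the paper's $\le 4$, but both close the argument). This is the same approach and the same decomposition as the paper, with only cosmetic differences in bookkeeping.
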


The proof of Theorem~\ref{theorem:main-convergence} builds on
convergence proofs developed in the analysis of online and stochastic
convex optimization~\cite{Zinkevich03, NemirovskiJuLaSh09,
  AgarwalDeXi10, Nesterov11}, but requires additional technical care,
since we never truly receive unbiased gradients.  We provide the proof
in Section~\ref{appendix:proof-main-convergence}. \\

Before continuing, we make a few remarks. First, the method is reasonably
robust to the selection of the step-size multiplier $\stepsize$;
\citet{NemirovskiJuLaSh09} previously noted this robustness for
gradient-based MD methods.  As long as $\stepsize[\iter] \propto 1 /
\sqrt{\iter}$, mis-specifying the multiplier $\stepsize$ results in a
scaling at worst linear in $\max\{\stepsize, \stepsize^{-1}\}$. We may also
use multiple independent random samples $\perturbrv[\iter, i]$, $i = 1, 2,
\ldots, \numobs$, in the construction of the gradient
estimator~\eqref{eqn:gradient-estimator} to obtain more accurate estimates
of the gradient via
$\subgrad[\iter] = \frac{1}{\numobs} \sum_{i=1}^\numobs
\gradestsmooth(\optvar[\iter]; \smoothparam_\iter, \perturbrv[\iter, i],
\statrv[\iter])$. See Corollary~\ref{corollary:ltwo-multiple} to follow
for an example of this construction.
In addition, the convergence rate of the method is independent of the
Lipschitz continuity constant $\lipgrad(\statprob)$ of the instantaneous
gradients $\nabla F(\cdot; \statrv)$, because, as noted following
Lemma~\ref{lemma:difference-to-gradient}, we may take $\smoothparam$
arbitrarily close to 0.
This suggests that similar results may hold for
non-differentiable functions; indeed, as we show in the next section, a
slightly more complicated construction of the estimator $\subgrad[\iter]$
leads to analogous guarantees for general non-smooth functions.

Although we have provided bounds on the expected convergence rate, it is
possible to give high-probability convergence
guarantees~\cite[cf.][]{CesaBianchiCoGe04, NemirovskiJuLaSh09} under
additional tail conditions on $\gradfunc$---for example, under the
boundedness condition $\dnorm{\gradfunc(\optvar; \statrv)} \le
\lipobj$---though obtaining sharp dimension-dependence requires care.
Additionally, while we have presented our results as convergence guarantees
for stochastic optimization problems, an inspection of our analysis in
Section~\ref{appendix:proof-main-convergence} shows that we also obtain
(expected) regret bounds for bandit online convex optimization
problems~\cite[cf.][]{FlaxmanKaMc05, BartlettDaHaKaRaTe08, AgarwalDeXi10}.

%------------------------------
\subsubsection{Examples and corollaries}
\label{sec:convergence-corollaries}

We now provide examples of random sampling strategies that lead to
concrete bounds for the mirror descent algorithm based on the
subgradient estimator~\eqref{eqn:gradient-estimator}.  For each
corollary, we specify the norm $\norm{\cdot}$, proximal function
$\prox$, and distribution $\smoothingdist$. We then compute the values
that the distribution $\smoothingdist$ implies in
Assumption~\ref{assumption:random-function-smoothness} and apply
Theorem~\ref{theorem:main-convergence} to obtain a convergence rate.

We begin with a corollary that characterizes the convergence rate of
our algorithm with the proximal function $\prox(\optvar) \defeq \half
\ltwo{\optvar}^2$ under a Lipschitz continuity condition:

\begin{corollary}
  \label{corollary:ltwo}
  Given an optimization domain $\optdomain \subseteq \{\optvar \in
  \R^d \mid \ltwo{\optvar} \le \radius\}$, suppose that
  $\smoothingdist$ is uniform on the surface of the $\ell_2$-ball of
  radius $\sqrt{d}$, and that $\E[\ltwo{\gradfunc(\optvar;
      \statrv)}^2] \le \lipobj^2$.  Then
  \begin{equation*}
    \E\left[f(\what{\optvar}(\totaliter)) - f(\optvar^*)\right]
    \le 2\frac{\radius \lipobj \sqrt{\dim}}{\sqrt{\totaliter}}
    \max\{\stepsize, \stepsize^{-1}\}
    + \stepsize \smoothparam^2
    \frac{\radius \lipobj \sqrt{\dim}}{\totaliter}
    + \smoothparam \frac{\radius \lipobj \sqrt{\dim} \log(2\totaliter)}{
      \totaliter}.
  \end{equation*}
\end{corollary}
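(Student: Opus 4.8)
The plan is to derive Corollary~\ref{corollary:ltwo} as an immediate specialization of Theorem~\ref{theorem:main-convergence}. Fixing the norm $\norm{\cdot} = \ltwo{\cdot}$, the proximal function $\prox(\optvar) = \half\ltwo{\optvar}^2$, and $\smoothingdist$ to be the uniform distribution on the sphere $\S^{\dim-1}(0,\sqrt\dim)$, it remains only to (i) verify Assumptions~\ref{assumption:prox}--\ref{assumption:random-function-smoothness} for this instantiation, (ii) compute the distribution-dependent quantities $\sampledim(\dim)$ and $\bigznorm$ of Assumption~\ref{assumption:smoothing-dist}, and (iii) substitute these values, together with the prescribed step sizes $\stepsize[\iter]$ and smoothing parameters $\smoothparam_\iter$, into the bound~\eqref{eqn:main-convergence}.

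For (i): the function $\prox(\optvar) = \half\ltwo{\optvar}^2$ is $1$-strongly convex with respect to $\ltwo{\cdot}$ and induces the Bregman divergence $\divergence(\optvar,\altoptvar) = \half\ltwo{\optvar-\altoptvar}^2$, which is bounded by a constant multiple of $\radius^2$ on $\optdomain \subseteq \B^\dim(0,\radius)$; this gives Assumption~\ref{assumption:prox}. Since $\ltwo{\cdot}$ is self-dual, the hypothesis $\E[\ltwo{\gradfunc(\optvar;\statrv)}^2] \le \lipobj^2$ is precisely Assumption~\ref{assumption:lipschitz-objective}. The support of $\smoothingdist$ is $\S^{\dim-1}(0,\sqrt\dim)$, so Assumption~\ref{assumption:support-condition} requires only that $F(\cdot;\statval)$ be defined on the $\smoothparam_1\sqrt\dim$-enlargement of $\optdomain$ --- a mild condition, since the $\smoothparam_\iter$ may be taken arbitrarily small (cf.\ the remark following Lemma~\ref{lemma:difference-to-gradient}), and if it fails one runs the algorithm over $(1-\epsilon)\optdomain$ as noted after Assumption~\ref{assumption:support-condition}. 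We invoke the theorem under its standing smoothness hypothesis (Assumption~\ref{assumption:random-function-smoothness}); the resulting rate turns out not to involve $\lipgrad(\statprob)$, so the corollary can be phrased in terms of $\lipobj$ alone.

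The substantive step is (ii). Write $\perturbrv = \sqrt\dim\, U$ with $U$ uniform on the unit sphere $\S^{\dim-1}$, so $\ltwo{\perturbrv}^2 \equiv \dim$ is deterministic. Using $\E[UU^\top] = \dim^{-1} I_{\dim\times\dim}$ gives $\E[\perturbrv\perturbrv^\top] = I$, the standing normalization of Section~\ref{sec:smooth-case}. For condition~\eqref{EqnJohn}, since $\ltwo{\perturbrv}^2$ is constant,
\begin{equation*}
  \E\big[\ltwo{\<g,\perturbrv\>\perturbrv}^2\big]
  = \E\big[\<g,\perturbrv\>^2\,\ltwo{\perturbrv}^2\big]
  = \dim\,\E\big[\<g,\perturbrv\>^2\big]
  = \dim\, g^\top\E[\perturbrv\perturbrv^\top]\,g
  = \dim\,\ltwo{g}^2 ,
\end{equation*}
so we may take $\sampledim(\dim) = \dim$; likewise $\bigznorm^2 = \E[\ltwo{\perturbrv}^4\ltwo{\perturbrv}^2] = \E[\ltwo{\perturbrv}^6] = \dim^3 < \infty$, which is all Assumption~\ref{assumption:smoothing-dist} demands of $\bigznorm$ (its numerical value does not enter~\eqref{eqn:main-convergence}).

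Finally, step (iii) is pure substitution: putting $\sqrt{\sampledim(\dim)} = \sqrt\dim$ into the definitions of $\stepsize[\iter]$ and $\smoothparam_\iter$ and into~\eqref{eqn:main-convergence} reproduces the displayed inequality. I expect the only point requiring any real care to be the computation in (ii): one must use precisely the rescaling to radius $\sqrt\dim$, so that $\E[\perturbrv\perturbrv^\top]$ equals $I$ rather than a nontrivial multiple of the identity --- this normalization is exactly what makes $\sampledim(\dim)$ scale as $\dim$ (and not, say, $\dim^2$), and hence what yields the advertised $\sqrt\dim$ overhead relative to a full-gradient method. The remaining assumption checks and the final algebra are routine.
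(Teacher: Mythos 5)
Your proposal is correct and follows essentially the same route as the paper: compute $\sampledim(\dim) = \dim$ via $\ltwo{\perturbrv}^2 \equiv \dim$ and $\E[\perturbrv\perturbrv^\top] = I$, note $\bigznorm^2 = \E[\ltwo{\perturbrv}^6] = \dim^3$, and substitute into Theorem~\ref{theorem:main-convergence}. The paper's proof is merely terser, omitting the routine verification of Assumptions~\ref{assumption:prox}--\ref{assumption:random-function-smoothness} that you spell out, so the two arguments are substantively identical.
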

\begin{proof}
  Since $\ltwo{\perturbrv} = \sqrt{\dim}$, we have $\bigznorm =
  \sqrt{\E[\ltwo{\perturbrv}^6]} = \dim^{3/2}$, and by the
  assumption that
  $\E[\perturbrv\perturbrv^\top] = I$, we see that
  \begin{equation*}
    \E[\ltwo{\<g, \perturbrv\> \perturbrv}^2] = \dim \E[\<g,
      \perturbrv\>^2] = \dim \E[g^\top \perturbrv\perturbrv^\top g],
    \qquad \mbox{valid for any $g \in \R^d$}.
  \end{equation*}
  Thus Assumption~\ref{assumption:smoothing-dist} holds with
  $\sampledim(\dim) = \dim$, and the claim follows from
  Theorem~\ref{theorem:main-convergence}.
\end{proof}

The rate Corollary~\ref{corollary:ltwo} provides is the fastest
derived to date for zero-order stochastic optimization using two
function evaluations; both \citet{AgarwalDeXi10} and
\citet{Nesterov11} achieve rates of convergence of order $\radius
\lipobj \dim / \sqrt{\totaliter}$.  In concurrent work,
\citet{GhadimiLa13} provide a result (their Corollary 3.3) that
achieves a similar rate to that above, but their primary focus is on
non-convex problems.  Moreover, we show in the sequel that this
convergence rate is actually optimal.

Using multiple function evaluations yields faster convergence rates, as we
obtain more accurate estimates of the instantaneous gradients
$\gradfunc(\optvar; \statrv)$.  The following extension of
Corollary~\ref{corollary:ltwo} illustrates this effect:
\begin{corollary}
  \label{corollary:ltwo-multiple}
  In addition to the conditions of Corollary~\ref{corollary:ltwo}, let
  $\perturbrv[\iter,i]$, $i = 1, \ldots, \numobs$ be
  sampled independently according to $\smoothingdist$, and at each iteration
  of mirror descent use the gradient estimate $\subgrad[\iter] =
  \frac{1}{\numobs} \sum_{i=1}^\numobs \gradestsmooth(\optvar[\iter];
  \smoothparam_\iter, \perturbrv[\iter,i], \statrv[\iter])$ with the step
  and perturbation sizes
  \begin{equation*}
    \stepsize[\iter] = \stepsize \frac{\radius}{2 \lipobj
      \max\{\sqrt{d / \numobs}, 1\}} \cdot \frac{1}{\sqrt{\iter}}
    ~~~ \mbox{and} ~~~
    \smoothparam_\iter = \smoothparam \frac{\lipobj}{\lipgrad(\statprob)
      d^{3/2}} \cdot \frac{1}{\iter}.
  \end{equation*}
  There exists a universal constant $C \le 5$ such that for all $\totaliter$,
  \begin{equation*}
    \E\left[f(\what{\optvar}(\totaliter)) - f(\optvar^*)\right]
    \le C \frac{\radius \lipobj \sqrt{1 + d / \numobs}}{\sqrt{\totaliter}}
    \left[
      \max\{\stepsize, \stepsize^{-1}\}
      + \stepsize \smoothparam^2 \frac{1}{\sqrt{\totaliter}}
      + \smoothparam\frac{\log(2\totaliter)}{\totaliter}\right].
  \end{equation*}
\end{corollary}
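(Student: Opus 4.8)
The plan is to reduce Corollary~\ref{corollary:ltwo-multiple} to Theorem~\ref{theorem:main-convergence} by re-establishing Lemma~\ref{lemma:difference-to-gradient} for the $\numobs$-sample averaged estimator $\subgrad[\iter] = \frac{1}{\numobs}\sum_{i=1}^\numobs \gradestsmooth(\optvar[\iter];\smoothparam_\iter,\perturbrv[\iter,i],\statrv[\iter])$, in a form in which the dimension factor $\sampledim(\dim)$ in~\eqref{eqn:first-variance} is improved from $\dim$ (its value for the sphere of radius $\sqrt\dim$, as computed in the proof of Corollary~\ref{corollary:ltwo}) to a quantity of order $1 + \dim/\numobs$. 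Since the proof of Theorem~\ref{theorem:main-convergence} invokes Lemma~\ref{lemma:difference-to-gradient} only through the mean identity~\eqref{eqn:first-mean} and the second-moment bound~\eqref{eqn:first-variance}, replacing $\sampledim(\dim)$ by $1+\dim/\numobs$ and re-running that proof verbatim --- with the step and perturbation sizes stated in the corollary, which agree with the theorem's prescription up to the constant-order factor $\sqrt{1+\dim/\numobs} \le \sqrt{2}\,\max\{\sqrt{\dim/\numobs},1\}$ (and a correspondingly smaller $\smoothparam_\iter$, which only helps) --- yields the claimed rate, the numerical factors collapsing into a single $C \le 5$ by the robustness of mirror descent to the step-size multiplier noted after Theorem~\ref{theorem:main-convergence}.

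For the bias, conditionally on $(\optvar[\iter],\statrv[\iter])$ every summand of $\subgrad[\iter]$ has the same expectation, so averaging leaves~\eqref{eqn:first-mean} untouched, with $\ltwo{\direrrvec}\le\half\E[\ltwo{\perturbrv}^3]=\half\dim^{3/2}$. The substantive step is the second moment. Conditioning on $\statrv[\iter]$, the $\numobs$ summands are i.i.d.\ with conditional mean $\bar g\defeq\E[\gradestsmooth(\optvar[\iter];\smoothparam_\iter,\perturbrv,\statrv[\iter])\mid\statrv[\iter]]$, and since $\ltwo{\cdot}$ derives from an inner product, the cross terms in $\ltwo{\subgrad[\iter]}^2$ collapse to $\ltwo{\bar g}^2$, giving
\begin{equation*}
  \E\big[\ltwo{\subgrad[\iter]}^2 \,\big|\, \statrv[\iter]\big]
  = \frac{1}{\numobs}\,\E\big[\ltwo{\gradestsmooth(\optvar[\iter];\smoothparam_\iter,\perturbrv,\statrv[\iter])}^2 \,\big|\, \statrv[\iter]\big]
  + \frac{\numobs-1}{\numobs}\,\ltwo{\bar g}^2 .
\end{equation*}
I bound the first term pointwise by~\eqref{eqn:first-variance} with $\sampledim(\dim)=\dim$ and $\bigznorm^2=\dim^3$. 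For the second term, the key move --- the one place the two-point structure is actually used --- is to apply the conditional version of~\eqref{eqn:first-mean}, equivalently Lemma~\ref{lemma:difference-to-gradient} with $\statprob$ taken to be a point mass at $\statval$: this gives $\bar g=\nabla F(\optvar[\iter];\statrv[\iter])+\smoothparam_\iter\lipgrad(\statrv[\iter])\direrrvec$ with $\ltwo{\direrrvec}\le\half\dim^{3/2}$, hence $\ltwo{\bar g}^2\le 2\ltwo{\nabla F(\optvar[\iter];\statrv[\iter])}^2+\half\smoothparam_\iter^2\lipgrad(\statrv[\iter])^2\dim^3$. Taking expectations and combining the two pieces gives $\E[\ltwo{\subgrad[\iter]}^2]\le 2(1+\dim/\numobs)\,\E[\ltwo{\gradfunc(\optvar[\iter];\statrv[\iter])}^2]+\smoothparam_\iter^2\lipgrad(\statprob)^2\dim^3$, which is exactly~\eqref{eqn:first-variance} with $\sampledim(\dim)\mapsto 1+\dim/\numobs$ (and $\bigznorm^2$ inflated by a constant); feeding this and the unchanged bias bound into the proof of Theorem~\ref{theorem:main-convergence} finishes the argument.

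The main obstacle is precisely this variance-reduction step. The crude Jensen estimate $\ltwo{\bar g}^2\le\E[\ltwo{\gradestsmooth}^2\mid\statrv[\iter]]$ would leave the second moment at order $(1+1/\numobs)\dim$, wiping out the gain from averaging, so one must instead use the bias expansion to see that $\ltwo{\bar g}^2$ is governed by the \emph{uninflated} quantity $\ltwo{\nabla F(\optvar[\iter];\statrv[\iter])}^2$ rather than by its $\sampledim(\dim)$-times-larger surrogate --- it is exactly this that splits the $\dim/\numobs$ contribution off from the order-$1$ contribution. The only remaining care is bookkeeping the accumulated factors of two (together with the $\max\{\stepsize,\stepsize^{-1}\}$ optimization) tightly enough to certify $C\le 5$.
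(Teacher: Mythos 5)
Your proof is correct and follows essentially the same route as the paper's: both decompose $\E[\ltwo{\subgrad[\iter]}^2\mid\statrv[\iter]]$ via the bias--variance identity for an i.i.d.\ average, bound the per-sample second moment with inequality~\eqref{eqn:first-variance}, and --- the crucial step --- bound $\ltwo{\bar g}^2$ via the conditional version of the mean identity~\eqref{eqn:first-mean} so that the order-$1$ and order-$d/\numobs$ contributions split cleanly, before plugging the resulting effective $\sampledim(d)\asymp 1+d/\numobs$ back into the proof of Theorem~\ref{theorem:main-convergence}. The only cosmetic difference is that the paper writes $\ltwo{\bar g}^2 + \frac{1}{\numobs}\E[\ltwo{\gradestsmooth - \bar g}^2]$ and then drops the centering to get $\ltwo{\bar g}^2 + \frac{1}{\numobs}\E[\ltwo{\gradestsmooth}^2]$, while you keep the exact weights $\frac{\numobs-1}{\numobs}$ and $\frac{1}{\numobs}$; the two are equivalent up to a dropped nonpositive term, and both isolate the one idea that matters, which you correctly flag: replacing the crude Jensen bound $\ltwo{\bar g}^2\le\E[\ltwo{\gradestsmooth}^2\mid\statrv]$ (which would erase the $1/\numobs$ gain) with the bias expansion.
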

\noindent
Corollary~\ref{corollary:ltwo-multiple} shows the intuitive result that,
with a number of evaluations linear in the dimension $d$, it is possible to
attain the standard (full-information) convergence rate $\radius \lipobj /
\sqrt{\totaliter}$ (cf.~\cite{AgarwalBaRaWa12}) using only function
evaluations; we are (essentially) able to estimate the gradient $\gradfunc(\optvar; \statrv)$. We provide a proof of
Corollary~\ref{corollary:ltwo-multiple} in
Section~\ref{appendix:proof-ltwo-multiple}.

In high-dimensional scenarios, appropriate choices for the proximal
function $\prox$ yield better scaling on the norm of the
gradients~\cite{NemirovskiYu83, Gentile02, NemirovskiJuLaSh09}.  In
the setting of online learning or stochastic optimization, suppose
that one observes gradients $\gradfunc(\optvar; \statrv)$.  If the
domain $\optdomain$ is the simplex, then exponentiated gradient
algorithms~\cite{KivinenWa97, BeckTe03} using the proximal function
$\prox(\optvar) = \sum_j \optvar_j \log \optvar_j$ obtain rates of
convergence dependent on the $\ell_\infty$-norm of the gradients
$\linf{\gradfunc(\optvar; \statrv)}$.  This scaling is more palatable
than bounds that depend on Euclidean norms applied to the gradient
vectors, which may be a factor of $\sqrt{d}$ larger.
Similar results apply using proximal functions based on
$\ell_p$-norms~\cite{Ben-TalMaNe01,BeckTe03}.  In our case, if we make
the choice $p = 1 + \frac{1}{\log(2\dim)}$ and $\prox(\optvar) =
\frac{1}{2(p-1)} \norm{\optvar}_p^2$, we obtain the following
corollary, which holds under the conditions of
Theorem~\ref{theorem:main-convergence}.
\begin{corollary}
  \label{corollary:p-norms}
  Suppose that \mbox{$\E[\linf{\gradfunc(\optvar; \statrv)}^2] \le
    \lipobj^2$,} the optimization domain $\optdomain$ is contained in
  the $\ell_1$-ball $\{\optvar \in \R^d \, \mid \, \lone{\optvar} \le
  \radius\}$, and $\smoothingdist$ is uniform on the hypercube
  $\{-1, 1\}^d$.  There is a universal
  constant $C \le 2\exp(1)$ such that
  \begin{equation*}
    \E\left[f(\what{\optvar}(\totaliter)) - f(\optvar^*)\right]
    \le C \frac{\radius \lipobj \sqrt{d \log (2d)}}{\sqrt{\totaliter}}
    \max\left\{\stepsize, \stepsize^{-1}\right\}
    + C \frac{\radius \lipobj \sqrt{\dim \log (2\dim)}}{\totaliter}
    \left(\stepsize \smoothparam^2 + \smoothparam \log \totaliter\right).
  \end{equation*}
\end{corollary}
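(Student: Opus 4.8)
The plan is to specialize Theorem~\ref{theorem:main-convergence} to the norm $\norm{\cdot} = \norm{\cdot}_p$, whose dual is $\dnorm{\cdot} = \norm{\cdot}_q$ for the conjugate exponent $q = 1 + \log(2\dim)$, and then to read off what each of the theorem's problem constants ($\radius$, $\lipobj$, $\sampledim(\dim)$, and $\bigznorm$) becomes for the stated proximal function and hypercube smoothing distribution. Throughout I take $\dim \ge 2$, so that $p = 1 + 1/\log(2\dim) \in (1,2]$ and $q \ge 2$; the scalar case is trivial.

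First I would check Assumptions~\ref{assumption:prox}--\ref{assumption:support-condition}. The function $\prox(\optvar) = \frac{1}{2(p-1)}\norm{\optvar}_p^2$ is $1$-strongly convex with respect to $\norm{\cdot}_p$ by the standard strong-convexity estimate for the squared $\ell_p$-norm, $1 < p \le 2$~\cite{Ben-TalMaNe01,BeckTe03}. For the divergence bound, initialize mirror descent at $\optvar[1] = \argmin_{\optvar \in \optdomain} \prox(\optvar)$; since $\prox \ge 0$ with $\prox(0) = 0$ and $\norm{\optvar}_p \le \lone{\optvar} \le \radius$ on $\optdomain$, first-order optimality of $\optvar[1]$ gives $\divergence(\optvar^*, \optvar[1]) \le \prox(\optvar^*) \le \frac{\radius^2}{2(p-1)} = \half \radius^2 \log(2\dim)$, and this divergence-to-the-initial-iterate is the quantity that enters the (telescoped) mirror-descent bound, so Assumption~\ref{assumption:prox} is in force with its radius parameter taken to be $\radius \sqrt{\log(2\dim)}$. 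For Assumption~\ref{assumption:lipschitz-objective}, the elementary inequality $\norm{v}_q \le \dim^{1/q} \linf{v}$ together with $\dim^{1/q} = \exp(\log \dim / (1 + \log(2\dim))) \le \exp(1)$ gives $\E[\norm{\gradfunc(\optvar; \statrv)}_q^2] \le \exp(2)\, \lipobj^2$, so the relevant Lipschitz constant is $\exp(1)\lipobj$. Assumption~\ref{assumption:support-condition} holds either by hypothesis on the domains of the $F(\cdot; \statval)$ or, since $\{-1,1\}^\dim$ is compact, by the shrinkage device noted after that assumption.

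Next I would verify Assumption~\ref{assumption:smoothing-dist} and extract $\sampledim(\dim)$. For $\perturbrv$ uniform on $\{-1,1\}^\dim$ the coordinates are i.i.d.\ Rademacher, so $\E[\perturbrv\perturbrv^\top] = I$, while $\norm{\perturbrv}_p \equiv \dim^{1/p}$ and $\norm{\perturbrv}_q \equiv \dim^{1/q}$ are deterministic; hence $\bigznorm^2 = \dim^{4/p + 2/q}$ is finite, and its value does not enter the rate. For the bound~\eqref{EqnJohn}, the deterministic norm of $\perturbrv$ gives $\norm{\<g,\perturbrv\>\perturbrv}_q = |\<g,\perturbrv\>|\, \dim^{1/q}$, so that
\begin{equation*}
  \E[\norm{\<g,\perturbrv\>\perturbrv}_q^2] = \dim^{2/q}\, \E[\<g,\perturbrv\>^2] = \dim^{2/q}\, \ltwo{g}^2 \le \dim\, \norm{g}_q^2,
\end{equation*}
using $\E[\perturbrv\perturbrv^\top] = I$ in the second equality and $\ltwo{g}^2 \le \dim^{1 - 2/q}\norm{g}_q^2$ (valid since $q \ge 2$) in the inequality. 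Thus Assumption~\ref{assumption:smoothing-dist} holds with $\sampledim(\dim) = \dim$.

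Finally, substituting the radius $\radius\sqrt{\log(2\dim)}$, the Lipschitz constant $\exp(1)\lipobj$, and $\sampledim(\dim) = \dim$ into~\eqref{eqn:main-convergence}, the product $\radius \cdot \lipobj \cdot \sqrt{\sampledim(\dim)}$ appearing there becomes $\exp(1)\, \radius\lipobj\sqrt{\dim\log(2\dim)}$; combining with the leading factor $2$ in the first term of~\eqref{eqn:main-convergence} (and adjusting $\log(2\totaliter)$ against $\log\totaliter$ within the same constant) yields the stated bound with $C = 2\exp(1)$. The step I expect to be the main obstacle is obtaining this constant alongside the sharp $\sqrt{\dim\log(2\dim)}$ scaling: it hinges on the nontrivial strong-convexity of the $\ell_p$-prox as $p \downarrow 1$, on using $\divergence(\optvar^*, \optvar[1])$ rather than $\sup_{\optvar} \divergence(\optvar^*, \optvar)$ (which would cost an extra constant), and on pairing $\dim^{1/q} \le \exp(1)$ against $\sampledim(\dim) = \dim$ so that the factors $\sqrt{\dim}$ and $\sqrt{\log(2\dim)}$ each appear exactly once with no spurious logarithm sneaking into $\sampledim(\dim)$.
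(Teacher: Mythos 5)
Your computation of $\sampledim(\dim)=\dim$ directly in the $\ell_q$-norm, via $\E[\norm{\<g,\perturbrv\>\perturbrv}_q^2] = \dim^{2/q}\ltwo{g}^2 \le \dim\,\norm{g}_q^2$, is clean and tight; it avoids the paper's detour through $\linf{\cdot}$ and the associated $\exp(2)$ transfer factor, and as such I find it preferable. The Lipschitz-constant estimate $\lipobj_{\mathrm{eff}}=\exp(1)\lipobj$ via $\dim^{1/q}\le\exp(1)$ agrees with the paper.

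However, there is a genuine gap in the radius argument. You claim that ``this divergence-to-the-initial-iterate is the quantity that enters the (telescoped) mirror-descent bound,'' so that it suffices to bound $\divergence(\optvar^*,\optvar[1]) \le \prox(\optvar^*) \le \tfrac12\radius^2\log(2\dim)$. This is correct only when the stepsizes are constant. Theorem~\ref{theorem:main-convergence} uses $\stepsize[\iter]\propto 1/\sqrt{\iter}$, and Lemma~\ref{lemma:linear-regret}, which the proof of the theorem relies on, explicitly requires Assumption~\ref{assumption:prox}'s \emph{uniform} bound $\divergence(\optvar^*,\optvar)\le\tfrac12\radius^2$ over $\optvar\in\optdomain$: after Abel summation, the telescoped term contributes $\sum_{\iter\ge 2}\bigl(\tfrac{1}{\stepsize[\iter]}-\tfrac{1}{\stepsize[\iter-1]}\bigr)\divergence(\optvar^*,\optvar[\iter])$, which depends on the divergence at \emph{every} iterate, not just the initial one. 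And the pointwise bound $\divergence(\optvar^*,\optvar)\le\prox(\optvar^*)$ fails badly for general $\optvar\in\optdomain$; for instance, $\optvar^*=0$ gives $\prox(\optvar^*)=0$ while $\divergence(0,\optvar)=\prox(\optvar)$ can be as large as $\tfrac12\radius^2\log(2\dim)$. The paper instead invokes Lemma~3 of Gentile to establish the uniform bound $\divergence(\optvar,\altoptvar)\le 2\radius^2\log(2\dim)$ on $\optdomain\times\optdomain$, which forces the effective radius to be $2\radius\sqrt{\log(2\dim)}$ rather than your $\radius\sqrt{\log(2\dim)}$. This is where your factor-of-two savings (which is what makes your numerics land exactly on $C=2\exp(1)$) comes from, and it is not justified; with the uniform Gentile bound, a literal application of~\eqref{eqn:main-convergence} produces a leading constant of $4\exp(1)$, not $2\exp(1)$. (The paper's stated bound $C\le 2\exp(1)$ does appear optimistic by a factor of two under a naive reading of their own argument, but the correct justification is not the initial-iterate trick you propose.)

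To repair the proof along the paper's lines, keep your $\sampledim(\dim)=\dim$ computation, but replace the initial-iterate radius argument with the Gentile uniform bound $\divergence(\optvar,\altoptvar)\le 2\radius^2\log(2\dim)$ for all $\optvar,\altoptvar\in\optdomain$, and then substitute $\radius_{\mathrm{eff}}=2\radius\sqrt{\log(2\dim)}$, $\lipobj_{\mathrm{eff}}=\exp(1)\lipobj$ into~\eqref{eqn:main-convergence}. Alternatively, if you wish to salvage the $\tfrac12\radius^2\log(2\dim)$ bound, you would need to use a constant stepsize $\stepsize[\iter]=\stepsize/\sqrt{\totaliter}$ (with $\totaliter$ known in advance) and reprove the mirror-descent telescoping so that only $\divergence(\optvar^*,\optvar[1])$ appears; that is a different analysis from the one encapsulated in Lemma~\ref{lemma:linear-regret}.
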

\begin{proof}
  The chosen of proximal function $\prox$ is strongly convex
  with respect to the norm $\norm{\cdot}_p$ (see~\cite[Appendix
    1]{NemirovskiYu83}).  In addition, the choice $q = 1 + \log
  (2d)$ implies $1/p + 1/q = 1$, and $\norm{v}_q \le \exp(1) \linf{v}$
  for any $v \in \R^d$.  Consequently, we have $\E[\norm{\<g,
      \perturbrv\> \perturbrv}_q^2] \le \exp(2) \E[\linf{\<g, \perturbrv\>
      \perturbrv}^2]$, which allows us to apply
  Theorem~\ref{theorem:main-convergence} with the norm $\norm{\cdot}
  = \lone{\cdot}$ and the dual norm $\dnorm{\cdot} = \linf{\cdot}$.

  We claim that Assumption~\ref{assumption:smoothing-dist} is satisfied
  with $\sampledim(d) \le d$.  Since $\perturbrv \sim \uniform(\{-1,
  1\}^d)$, we have
  \begin{equation*}
    \E\left[\linf{\<g, \perturbrv\> \perturbrv}^2\right]
    = \E\left[\<g, \perturbrv\>^2\right]
    = g^\top \E[\perturbrv \perturbrv^\top] g
    = \ltwo{g}^2 \le d \linf{g}^2
    ~~ \mbox{for~any~} g \in \R^d.
  \end{equation*}
  Finally,
  we have $\bigznorm =
  \sqrt{\E[\lone{\perturbrv}^4 \linf{\perturbrv}^2]} = d^2$, which
  is finite as needed.
  By the inclusion of $\optdomain$ in the
  $\ell_1$-ball of radius $R$ and our choice of proximal
  function, we have
  \begin{equation*}
    (p - 1) \divergence(\optvar, \altoptvar)
    \le \half \norm{\optvar}_p^2 + \half \norm{\altoptvar}_p^2
    + \norm{\optvar}_p \norm{\altoptvar}_p.
  \end{equation*}
  (For instance, see Lemma 3 in the paper~\cite{Gentile02}.)  We thus
  find that $\divergence(\optvar, \altoptvar) \le 2 \radius^2 \log
  (2\dim)$ for any $\optvar, \altoptvar \in \optdomain$, and using the
  step and perturbation size choices of
  Theorem~\ref{theorem:main-convergence} gives the result.
\end{proof}
Corollary~\ref{corollary:p-norms} attains a convergence rate that
scales with dimension as $\sqrt{\dim \log \dim}$, which is a much worse
dependence on
dimension than that of (stochastic) mirror descent using
full gradient information~\cite{NemirovskiYu83,
  NemirovskiJuLaSh09}. As in Corollaries~\ref{corollary:ltwo} and~\ref{corollary:ltwo-multiple}, which have similar additional $\sqrt{d}$ factors,
the additional dependence on $\dim$ suggests
that while $\order(1 / \epsilon^2)$ iterations are required to achieve
$\epsilon$-optimization accuracy for mirror descent methods,
the two-point method requires $\order(\dim /
\epsilon^2)$ iterations to obtain the same accuracy. In
Section~\ref{sec:lower-bounds} we show that this dependence is sharp:
apart from logarithmic factors, no algorithm can attain better
convergence rates, including the problem-dependent constants $\radius$
and $\lipobj$.

%------------------------------------
\subsection{Two-point gradient estimates and convergence rates: general case}
\label{sec:nonsmooth-case}

We now turn to the general setting in which the function
$F(\cdot;\statval)$, rather than having a Lipschitz continuous
gradient, satisfies only the milder condition of Lipschitz continuity.
The difficulty in this non-smooth case is that the simple gradient
estimator~\eqref{eqn:gradient-estimator} may have overly large
norm.  For instance, a naive calculation using only
the $\lipobj$-Lipschitz continuity of the function $f$ gives the bound
\begin{equation}
\label{EqnOppossum}
  \E\left[\ltwo{(f(\optvar + \smoothparam \perturbrv) - f(\optvar))
      \perturbrv / \smoothparam}^2\right] \le \lipobj^2
  \E\left[\ltwo{\smoothparam \ltwo{\perturbrv} \perturbrv /
      \smoothparam}^2\right] = \lipobj^2 \E[\ltwo{\perturbrv}^4].
\end{equation}
This upper bound always scales at least quadratically in the
dimension, since we have the lower bound
\mbox{$\E[\ltwo{\perturbrv}^4] \ge (\E[\ltwo{\perturbrv}^2])^2 =
  \dim^2$,} where the final equality uses the assumption $\E[\perturbrv
  \perturbrv^\top] = I_{\dim \times \dim}$.  This
quadratic dependence on dimension leads to a sub-optimal convergence
rate.  Moreover, this scaling appears to be unavoidable using a single
perturbing random vector: taking $f(\optvar) = \lipobj \ltwo{\optvar}$
and setting $\optvar = 0$ shows that the bound~\eqref{EqnOppossum} may
hold with equality.

Nevertheless, the convergence rate in
Theorem~\ref{theorem:main-convergence} shows that \emph{near}
non-smoothness is effectively the same as being smooth.  This
suggests that if we can smooth the objective $f$ slightly, we may
achieve a rate of convergence even in the non-smooth case that is
roughly the same as that in Theorem~\ref{theorem:main-convergence}.
The idea of smoothing the objective has been used to obtain faster
convergence rates in both deterministic and stochastic
optimization~\cite{Nesterov05b,DuchiBaWa12}. In the stochastic
setting, \citet{DuchiBaWa12} leverage the well-known fact that
convolution is a smoothing operation, and they consider minimization
of a sequence of smoothed functions
\begin{equation}
  \label{eqn:smoothed-function}
  f_{\smoothparam}(\optvar) \defeq \E[f(\optvar
    + \smoothparam \perturbrv)]
  = \int f(\optvar + \smoothparam z) d\smoothingdist(z),
\end{equation}
where $\perturbrv \in \R^d$ has density with respect to Lebesgue
measure.  In this case, $f_\smoothparam$ is always differentiable;
moreover, if $f$ is Lipschitz, then $\nabla f_\smoothparam$ is
Lipschitz under mild conditions.

The smoothed function~\eqref{eqn:smoothed-function} leads us to a
\emph{two-point} strategy: we use a random direction as in the smooth
case~\eqref{eqn:gradient-estimator} to estimate the gradient, but we
introduce an extra step of randomization for the point at which we
evaluate the function difference. Roughly speaking, this randomness
has the effect of making it unlikely that the perturbation vector
$\perturbrv$ is near a point of non-smoothness, which allows us to
apply results similar to those in the smooth case.

More precisely, our construction uses two non-increasing sequences of
positive parameters $\{\smoothparam_{1,\iter}\}_{\iter=1}^\infty$ and
$\{\smoothparam_{2,\iter}\}_{\iter=1}^\infty$ with
$\smoothparam_{2,\iter} \le \smoothparam_{1,\iter} / 2$, and two
smoothing distributions $\smoothingdist_1$, $\smoothingdist_2$ on
$\R^\dim$. Given smoothing constants $\smoothparam_1, \smoothparam_2$,
vectors $\perturbval_1, \perturbval_2$, and
observation $\statval$, we define the (non-smooth) directional
gradient estimate at the point $\optvar$ as
\begin{equation}
  \label{eqn:non-smooth-gradient-estimate}
  \gradestns(\optvar; \smoothparam_1, \smoothparam_2,
  \perturbval_1, \perturbval_2, \statval)
  \defeq \frac{F(\optvar + \smoothparam_1 \perturbval_1
    + \smoothparam_2 \perturbval_2; \statval)
    - F(\optvar + \smoothparam_1 \perturbval_1; \statval)}{
    \smoothparam_2} \perturbval_2.
\end{equation}
Using $\gradestns$ we may define our gradient estimator, which follows the
same intuition as our construction of the stochastic
gradient~\eqref{eqn:gradient-estimator} from the smooth
estimator~\eqref{eqn:smooth-gradient-estimate}. Now, upon
receiving the point $\statrv[\iter]$, we sample
independent vectors $\perturbrv[\iter]_1 \sim \smoothingdist_1$ and
$\perturbrv[\iter]_2 \sim \smoothingdist_2$, and set
\begin{equation}
  \label{eqn:gradient-estimator-general}
  \subgrad[\iter] =
  \gradestns(\optvar[\iter]; \smoothparam_{1,\iter},
  \smoothparam_{2,\iter}, \perturbrv[\iter]_1, \perturbrv[\iter]_2,
  \statrv[\iter]) =
  \frac{ F(\optvar[\iter] + \smoothparam_{1,\iter}
    \perturbrv[\iter]_1 + \smoothparam_{2,\iter} \perturbrv[\iter]_2;
    \statrv[\iter]) - F(\optvar[\iter] + \smoothparam_{1,\iter}
    \perturbrv[\iter]_1; \statrv[\iter])}{\smoothparam_{2,\iter}}
  \perturbrv[\iter]_2.
\end{equation}
We then proceed as in the preceding section, using this estimator in
the mirror descent method. \\

\newcounter{oldassumption}
\setcounter{oldassumption}{\value{assumption}}
\setcounter{assumption}{\value{saveassumption}}
\renewcommand{\theassumption}{\Alph{assumption}$'$}

To demonstrate the convergence of gradient-based schemes with gradient
estimator~\eqref{eqn:gradient-estimator-general}, we require a few
additional assumptions. For simplicity, in this section we focus on
results for the Euclidean norm $\ltwo{\cdot}$. We impose the following
condition on the Lipschitzian properties of $F(\cdot;\statval)$, which
is a slight strengthening of
Assumption~\ref{assumption:lipschitz-objective}.
\begin{assumption}
  \label{assumption:lipschitz-ae}
  There is a function $\lipobjfun \colon \statsamplespace \rightarrow
  \R_+$ such that for $\statprob$-a.e.\ $\statval
  \in \statsamplespace$, the function $F(\cdot; \statval)$ is
  $\lipobjfun(\statval)$-Lipschitz with respect to the $\ell_2$-norm
  $\ltwo{\cdot}$, and the quantity $\lipobj(\statprob) \defeq
  \sqrt{\E[\lipobjfun(\statrv)^2]}$ is finite.
\end{assumption}

% Get assumptions and their counters back to the way they were
\renewcommand{\theassumption}{\Alph{assumption}}
\setcounter{assumption}{\value{oldassumption}}

\noindent
We also impose the following assumption on the smoothing distributions
$\smoothingdist_1$ and $\smoothingdist_2$.
\begin{assumption}
  \label{assumption:smoothing-dist-general}
  The smoothing distributions are one of the following pairs: (1) both
  $\smoothingdist_1$ and $\smoothingdist_2$ are standard normal
  in $\R^\dim$ with identity covariance, (2) both
  $\smoothingdist_1$ and $\smoothingdist_2$ are uniform on the
  $\ell_2$-ball of radius $\sqrt{\dim+2}$, or (3) the distribution
  $\smoothingdist_1$ is uniform on the $\ell_2$-ball of radius
  $\sqrt{\dim+2}$ and  the distribution $\smoothingdist_2$ is
  uniform on the $\ell_2$-sphere of radius $\sqrt{\dim}$.
  Additionally, we assume the containment
  \begin{equation*}
    \dom F(\cdot; \statval) \supset \optdomain + \smoothparam_{1,1}
    \supp \smoothingdist_1 + \smoothparam_{2,1} \supp \smoothingdist_2
    ~~ \mbox{for~} \statval \in \statsamplespace.
  \end{equation*}
\end{assumption}

\noindent We then have the following analog of
Lemma~\ref{lemma:difference-to-gradient}, whose proof we
provide in Section~\ref{sec:proof-difference-to-gradient-general}:

%% \mjwcomment{Again, this feels weird to me: why state a very technical
%%   auxiliary lemma prior to the theorem statement?  Move it also, if you
%% decide to move the earlier one...}
\begin{lemma}
  \label{lemma:difference-to-gradient-general}
  Under Assumptions~\ref{assumption:lipschitz-ae}
  and~\ref{assumption:smoothing-dist-general}, the gradient
  estimator~\eqref{eqn:non-smooth-gradient-estimate} has expectation
  \begin{equation}
    \label{eqn:non-smooth-expectation}
    \E[\gradestns(\optvar; \smoothparam_1, \smoothparam_2,
      \perturbrv_1, \perturbrv_2, \statrv)]
    = \nabla f_{\smoothparam_1}(\optvar)
    + \frac{\smoothparam_2}{\smoothparam_1} \lipobj(\statprob)
    \direrrvec(\optvar, \smoothparam_1, \smoothparam_2),
  \end{equation}
  where $\direrrvec = \direrrvec(\optvar, \smoothparam_1, \smoothparam_2)$
  has bound $\ltwo{\direrrvec} \le \half
  \E[\ltwo{\perturbrv_2}^3]$.  There exists a universal constant
  $c$ such that
  \begin{equation}
    \label{eqn:non-smooth-second-moment}
    \E\left[\ltwo{
        \gradestns(\optvar; \smoothparam_1, \smoothparam_2,
        \perturbrv_1, \perturbrv_2, \statrv)}^2\right]
    \le c \,
    \lipobj(\statprob)^2 \dim \left(
    \sqrt{\frac{\smoothparam_{2}}{\smoothparam_{1}}}\,\dim + 1
    + \log \dim\right).
  \end{equation}
\end{lemma}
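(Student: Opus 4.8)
The plan is to prove both claims by reducing the analysis of the two-point estimator $\gradestns$ to properties of the \emph{once-smoothed} instantaneous function $F_{\smoothparam_1}(\optvar;\statval)\defeq\E_{\perturbrv_1\sim\smoothingdist_1}[F(\optvar+\smoothparam_1\perturbrv_1;\statval)]$ and its average $f_{\smoothparam_1}(\optvar)=\E_\statrv[F_{\smoothparam_1}(\optvar;\statrv)]$ (the function in \eqref{eqn:smoothed-function} with $\smoothingdist=\smoothingdist_1$), which are convex and continuously differentiable even though $F(\cdot;\statval)$ is not. The one auxiliary fact I would establish first is a dimension-free smoothness estimate: for each of the three distribution pairs in Assumption~\ref{assumption:smoothing-dist-general}, the gradient $\nabla F_{\smoothparam_1}(\cdot;\statval)$ is $(c\lipobjfun(\statval)/\smoothparam_1)$-Lipschitz (hence $\nabla f_{\smoothparam_1}$ is $(c\lipobj(\statprob)/\smoothparam_1)$-Lipschitz). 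The absence of a dimension factor here is exactly why the admissible distributions have ``radius'' of order $\sqrt\dim$: the $\sqrt\dim$ that usually appears in the Lipschitz constant of a convolution-smoothed function is cancelled by the $\sqrt\dim$ scaling of the support. I would prove this by the standard second-derivative estimate for convolutions --- for the Gaussian pair, differentiating twice via Stein's identity, centering $F(\cdot;\statval)$ at the point of evaluation, and bounding by the one-dimensional $\lipobjfun(\statval)$-Lipschitz property along the relevant line; for the ball pairs, a surface-integral (divergence-theorem) representation of $\nabla F_{\smoothparam_1}(\cdot;\statval)$ as an average of $F(\cdot;\statval)$ over a sphere gives the estimate at once.

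For the expectation \eqref{eqn:non-smooth-expectation}, I would take the expectation over $\perturbrv_1$ and $\statrv$ first (Fubini is justified by the Lipschitz bounds), which collapses the estimator to a difference quotient of the smooth convex function $f_{\smoothparam_1}$:
\[
\E[\gradestns(\optvar;\smoothparam_1,\smoothparam_2,\perturbrv_1,\perturbrv_2,\statrv)]=\E_{\perturbrv_2}\left[\frac{f_{\smoothparam_1}(\optvar+\smoothparam_2\perturbrv_2)-f_{\smoothparam_1}(\optvar)}{\smoothparam_2}\,\perturbrv_2\right].
\]
Writing the quotient as $\langle\nabla f_{\smoothparam_1}(\optvar),\perturbrv_2\rangle+\delta(\perturbrv_2)$ with $0\le\delta(\perturbrv_2)\le\tfrac12(c\lipobj(\statprob)/\smoothparam_1)\smoothparam_2\ltwo{\perturbrv_2}^2$ (the two-sided convexity/descent-lemma bounds for a convex function with Lipschitz gradient) and using $\E[\perturbrv_2\perturbrv_2^\top]=I$, we get $\E[\gradestns]=\nabla f_{\smoothparam_1}(\optvar)+\E[\delta(\perturbrv_2)\perturbrv_2]$ with $\ltwo{\E[\delta(\perturbrv_2)\perturbrv_2]}\le\E[\delta(\perturbrv_2)\ltwo{\perturbrv_2}]\le\tfrac{\smoothparam_2}{\smoothparam_1}\lipobj(\statprob)\cdot\tfrac{c}{2}\E[\ltwo{\perturbrv_2}^3]$, which is the claimed form (one absorbs the constant $c$, or verifies $c=1$ with sharp constants in the smoothness estimate).

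For the second moment \eqref{eqn:non-smooth-second-moment}, the naive Lipschitz bound $\E[\ltwo{\gradestns}^2]\le\lipobj(\statprob)^2\E[\ltwo{\perturbrv_2}^4]\asymp\lipobj(\statprob)^2\dim^2$ is too weak once $\smoothparam_2/\smoothparam_1$ is small, so I would use convexity to split it. With $y=\optvar+\smoothparam_1\perturbrv_1$, the difference quotient $G\defeq(F(y+\smoothparam_2\perturbrv_2;\statrv)-F(y;\statrv))/\smoothparam_2$ is sandwiched between the two directional derivatives, $\langle\gradfunc(y;\statrv),\perturbrv_2\rangle\le G\le\langle\gradfunc(y+\smoothparam_2\perturbrv_2;\statrv),\perturbrv_2\rangle$, whence $G^2\ltwo{\perturbrv_2}^2\le\langle\gradfunc(y;\statrv),\perturbrv_2\rangle^2\ltwo{\perturbrv_2}^2+\langle\gradfunc(y+\smoothparam_2\perturbrv_2;\statrv),\perturbrv_2\rangle^2\ltwo{\perturbrv_2}^2$. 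In the first term $\perturbrv_2$ is independent of $(\perturbrv_1,\statrv)$, so it contributes at most $c\dim\lipobj(\statprob)^2$ via the identity $\E_{\perturbrv_2}[\langle g,\perturbrv_2\rangle^2\ltwo{\perturbrv_2}^2]\le c\dim\ltwo{g}^2$ (valid for all three choices of $\smoothingdist_2$) together with $\ltwo{\gradfunc(\cdot;\statrv)}\le\lipobjfun(\statrv)$ --- this is the ``$+1$'' term. The second term is the crux because the argument $y+\smoothparam_2\perturbrv_2$ of the subgradient is correlated with $\perturbrv_2$. To decouple, I would write $y+\smoothparam_2\perturbrv_2=\optvar+\smoothparam_1(\perturbrv_1+\tfrac{\smoothparam_2}{\smoothparam_1}\perturbrv_2)$ and change measure in the $\perturbrv_1$-integral to shift $\perturbrv_1$ by $\tfrac{\smoothparam_2}{\smoothparam_1}\perturbrv_2$: for $\smoothingdist_1$ Gaussian this introduces the log-linear likelihood ratio $\exp(\tfrac{\smoothparam_2}{\smoothparam_1}\langle\perturbrv_1,\perturbrv_2\rangle-\tfrac{\smoothparam_2^2}{2\smoothparam_1^2}\ltwo{\perturbrv_2}^2)$, and for $\smoothingdist_1$ uniform on the ball an analogous comparison between the ball and its small translate. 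After the shift the subgradient depends only on $\perturbrv_1$, which is independent of $\perturbrv_2$, and one bounds the resulting expectation by H\"older in $\perturbrv_1$, using sub-exponential (Gaussian-chaos) moment bounds on $\langle\perturbrv_1,\perturbrv_2\rangle$, the bound $\ltwo{\gradfunc(\cdot;\statrv)}\le\lipobjfun(\statrv)$, and the concentration of $\ltwo{\perturbrv_2}$. The shift magnitude $\tfrac{\smoothparam_2}{\smoothparam_1}\ltwo{\perturbrv_2}\asymp\tfrac{\smoothparam_2}{\smoothparam_1}\sqrt\dim$ produces the $\sqrt{\smoothparam_2/\smoothparam_1}\,\dim$ factor, and truncating the likelihood ratio at a level of order $\log\dim$ (so that H\"older can be applied with a usable exponent) produces the $\log\dim$ term.

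I expect the second term of this decomposition --- the correlated term $\E[\langle\gradfunc(\optvar+\smoothparam_1\perturbrv_1+\smoothparam_2\perturbrv_2;\statrv),\perturbrv_2\rangle^2\ltwo{\perturbrv_2}^2]$ --- to be the main obstacle: the change of measure handles the formal dependence of the subgradient's argument on $\perturbrv_2$, but one must then bound the moments of the likelihood ratio against $\langle\gradfunc,\perturbrv_2\rangle^2\ltwo{\perturbrv_2}^2$ \emph{without} reverting to the trivial estimate $|\langle\gradfunc,\perturbrv_2\rangle|\le\lipobjfun(\statrv)\ltwo{\perturbrv_2}$, which would merely recover the $\dim^2$ bound uniformly in $\smoothparam_2/\smoothparam_1$; extracting the $\sqrt{\smoothparam_2/\smoothparam_1}$ gain requires carefully tracking how the exponential weight interacts with the direction $\perturbrv_2$, and carrying this out uniformly over the three admissible pairs (the ball/sphere cases lacking the clean log-linear likelihood ratio of the Gaussian case) accounts for much of the technical bookkeeping.
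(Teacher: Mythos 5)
Your plan for the expectation identity~\eqref{eqn:non-smooth-expectation} coincides with the paper's proof: smooth out $\perturbrv_1$ first to obtain $F_{\smoothparam_1}(\cdot;\statval)$ (equivalently $f_{\smoothparam_1}$), invoke the $(\lipobjfun(\statval)/\smoothparam_1)$-Lipschitz gradient of the smoothed function, and rerun the argument from Lemma~\ref{lemma:difference-to-gradient}.

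For the second-moment bound~\eqref{eqn:non-smooth-second-moment} you take a genuinely different route, and there is a real gap in it. The paper does \emph{not} try to decouple $\perturbrv_2$ from the subgradient field. Instead it reparametrizes to a $1$-Lipschitz convex $h$, separates the scalar difference $h(\perturbrv_1+\smoothparam\perturbrv_2)-h(\perturbrv_1)$ from the geometric factor $\ltwo{\perturbrv_2}^2$ by Cauchy--Schwarz (inequality~\eqref{eqn:intermediate-g-bound-general}), and then bounds the scalar fourth moment via Lemma~\ref{lemma:moment-bound}. The crucial mechanism there is two-fold: a dimension-free concentration bound (Lemma~\ref{lemma:rotation-concentration}) gives the $\log\dim$ term, while the $\sqrt{\smoothparam_2/\smoothparam_1}\,\dim$ term comes from a \emph{second-order} estimate $\E[h(\perturbrv_1+\smoothparam\perturbrv_2)-h(\perturbrv_1)]\le\tfrac{1}{\sqrt2}\smoothparam^2\sqrt\dim$ (an extra power of $\smoothparam$ over the naive Lipschitz $O(\smoothparam\sqrt\dim)$), proved by a rotational rearrangement argument (Lemma~\ref{lemma:linear-to-absolute-value}: the extremal $1$-Lipschitz convex functions reduce to $t\mapsto|t-a|$) combined with a stochastic dominance argument showing $a=0$ is extremal (Lemma~\ref{lemma:supremum-achieved-zero}). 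That second-order gain is precisely what your sketch does not reproduce. In your change-of-measure formulation, the cancellation you need is $\E_{\perturbrv_1}[L-1]=0$, but after the shift the quantity $\langle\gradfunc(\optvar+\smoothparam_1\perturbrv_1;\statrv),\perturbrv_2\rangle^2\ltwo{\perturbrv_2}^2$ \emph{also} depends on $\perturbrv_1$ through the subgradient, and since $F(\cdot;\statval)$ is merely Lipschitz (no modulus of continuity for $\gradfunc$), there is no Taylor expansion by which to isolate the $\perturbrv_1$-dependence and bring the $\E_{\perturbrv_1}[L-1]=0$ cancellation to bear. Truncating $L$ at a level $M$ and applying H\"older does give a term $M\cdot c\dim\lipobjfun(\statval)^2$ (accounting for your $\dim\log\dim$ claim), but the tail $\E[AL\,\indic{L>M}]$ bounded by Cauchy--Schwarz reverts to the crude $\E[A^2]^{1/2}\asymp\dim^2\lipobjfun(\statval)^2$ multiplied by a tail probability of $L$, and nothing in your sketch shows this produces the $\sqrt{\smoothparam_2/\smoothparam_1}\,\dim^2$ factor rather than, say, $(\smoothparam_2/\smoothparam_1)^{0}\dim^2$. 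You flag this as the ``main obstacle'' yourself, correctly; as written, it is a gap, not a detail. Two further issues: (i) for cases (2) and (3) of Assumption~\ref{assumption:smoothing-dist-general} the shifted law of $\perturbrv_1$ is not absolutely continuous w.r.t.\ the unshifted one (translates of the ball have different supports), so the likelihood ratio is not even defined and an explicit truncation of the domain is mandatory, adding more non-trivial work you have not described; and (ii) the change of measure never uses the convexity of $h$ in the way the paper's extremal-function argument does, and convexity is essential to the sharp $\smoothparam^2\sqrt\dim$ bias bound. By contrast, the first half of your decomposition---bounding the uncorrelated term $\E[\langle\gradfunc(\optvar+\smoothparam_1\perturbrv_1;\statrv),\perturbrv_2\rangle^2\ltwo{\perturbrv_2}^2]\le c\dim\lipobjfun(\statval)^2$---is clean and correct.
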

%
%% \noindent See Section~\ref{sec:proof-difference-to-gradient-general}
%% for the proof of this lemma.

Comparing Lemma~\ref{lemma:difference-to-gradient-general} to
Lemma~\ref{lemma:difference-to-gradient}, both show that one can
obtain nearly unbiased gradient of the function $f$ using two function
evaluations, but additionally, they show that the squared norm of the gradient
estimator is \emph{at most} $\dim$ times larger than the expected norm
of the subgradients $\partial F(\optvar; \statval)$, as captured by
the quantity $\lipobj^2$ from
Assumption~\ref{assumption:lipschitz-objective}
or~\ref{assumption:lipschitz-ae}. In our approach, non-smoothness
introduces an additional logarithmic penalty in the dimension; it may
be possible to remove this factor, but we do not know how at this
time. The key is that taking the second smoothing parameter
$\smoothparam_{2}$ to be small enough means that, aside from the
dimension penalty, the gradient estimator $\subgrad[\iter]$ is
essentially unbiased for $\nabla
f_{\smoothparam_{1,\iter}}(\optvar[\iter])$ and has squared norm at most
$\lipobj^2 \dim \log \dim$. This bound on size is essential for our
main result, which we now state.
\begin{theorem}
  \label{theorem:main-convergence-general}
  Under Assumptions~\ref{assumption:prox},
  \ref{assumption:lipschitz-ae},
  and~\ref{assumption:smoothing-dist-general}, consider a sequence
  $\{\optvar[\iter]\}_{\iter=1}^\infty$ generated according to the
  mirror descent update~\eqref{eqn:mirror-descent-update} using the
  gradient estimator~\eqref{eqn:gradient-estimator-general} with
  step and perturbation sizes
  \begin{equation*}
    \stepsize[\iter] = \stepsize \frac{\radius}{\lipobj(\statprob)
      \sqrt{\dim \log(2\dim)} \sqrt{\iter}}, \qquad
    \smoothparam_{1,\iter} = \smoothparam \frac{\radius}{\iter},
      \quad \text{ and } \quad
    \smoothparam_{2,\iter} = \smoothparam
      \frac{\radius}{\dim^2 \iter^2}.
  \end{equation*}
  Then there exists a universal (numerical) constant $c$ such that
  for all $\totaliter$,
  \begin{equation}
    \label{eqn:main-convergence-general}
    \E\left[f(\what{\optvar}(\totaliter)) - f(\optvar^*)\right] \le c
    \max\{\stepsize, \stepsize^{-1}\} \frac{\radius \lipobj(\statprob)
      \sqrt{\dim
        \log (2\dim)}}{\sqrt{\totaliter}} + c \smoothparam \radius
    \lipobj(\statprob) \sqrt{\dim}\, \frac{\log(2\totaliter)}{\totaliter},
  \end{equation}
%%   \begin{align*}
%%     \lefteqn{\E\left[f(\what{\optvar}(\totaliter)) - f(\optvar^*)\right]} \\
%%     & \le c \max\{\stepsize, \stepsize^{-1}\}
%%     \frac{\radius \lipobj \sqrt{\dim \log (2\dim)}}{\sqrt{\totaliter}}
%%     +  c \stepsize \frac{\radius \lipobj \sqrt{\dim}}
%%     {\sqrt{\log(2\dim)}} \, \frac{\log \totaliter}{\totaliter}
%%     +  c \frac{\radius \lipobj}{\sqrt{\dim}}
%%     \, \frac{\log \totaliter}{\totaliter}
%%     +  c \smoothparam \radius \lipobj \sqrt{\dim}\,
%%     \frac{\log \totaliter}{\totaliter},
%%   \end{align*}
  where $\what{\optvar}(\totaliter) = \frac{1}{\totaliter}
  \sum_{\iter=1}^\totaliter \optvar[\iter]$, and the expectation is
  taken with respect to the samples $\statrv$ and $\perturbrv$.
\end{theorem}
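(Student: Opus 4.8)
\medskip
\noindent\textbf{Proof idea.}
The plan is to treat the mirror-descent recursion~\eqref{eqn:mirror-descent-update} driven by the estimator~\eqref{eqn:gradient-estimator-general} as a \emph{biased, noisy} stochastic mirror descent against the \emph{moving} sequence of smoothed surrogates $f_{\smoothparam_{1,\iter}}$ (cf.~\eqref{eqn:smoothed-function}), and then to transfer the resulting regret bound back to $f$ via the Lipschitz smoothing error. Let $\mathcal{F}_{\iter-1}$ be the $\sigma$-field generated by $\statrv[1],\dots,\statrv[\iter-1]$ and the perturbations used through step $\iter-1$, so that $\optvar[\iter]$ is $\mathcal{F}_{\iter-1}$-measurable; applying Lemma~\ref{lemma:difference-to-gradient-general} with $\optvar=\optvar[\iter]$ gives the decomposition $\subgrad[\iter] = \nabla f_{\smoothparam_{1,\iter}}(\optvar[\iter]) + b^\iter + \xi^\iter$, where $b^\iter \defeq \frac{\smoothparam_{2,\iter}}{\smoothparam_{1,\iter}}\lipobj(\statprob)\direrrvec(\optvar[\iter],\smoothparam_{1,\iter},\smoothparam_{2,\iter})$ is $\mathcal{F}_{\iter-1}$-measurable with $\ltwo{b^\iter}\le \frac{\smoothparam_{2,\iter}}{\smoothparam_{1,\iter}}\lipobj(\statprob)\cdot\half \E[\ltwo{\perturbrv_2}^3]$, $\E[\xi^\iter\mid\mathcal{F}_{\iter-1}]=0$, and $\E[\ltwo{\subgrad[\iter]}^2\mid\mathcal{F}_{\iter-1}]\le c\,\lipobj(\statprob)^2\dim\big(\sqrt{\smoothparam_{2,\iter}/\smoothparam_{1,\iter}}\,\dim+1+\log\dim\big)$.

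Next I would invoke the standard one-step mirror-descent inequality: by $1$-strong convexity of $\prox$ (Assumption~\ref{assumption:prox}) and the update rule, for every $\iter$ and every $u\in\optdomain$,
\[
  \<\subgrad[\iter],\,\optvar[\iter]-u\> \;\le\; \frac{1}{\stepsize[\iter]}\bigl(\divergence(u,\optvar[\iter]) - \divergence(u,\optvar[\iter+1])\bigr) + \frac{\stepsize[\iter]}{2}\ltwo{\subgrad[\iter]}^2 .
\]
Setting $u=\optvar^*$, summing over $\iter=1,\dots,\totaliter$, and applying Abel summation (using that $\iter\mapsto1/\stepsize[\iter]$ is nondecreasing and $\divergence(\optvar^*,\optvar)\le\half\radius^2$ on $\optdomain$), the telescoped divergence terms collapse to $\radius^2/(2\stepsize[\totaliter])$. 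To lower-bound the left side I would use: convexity and differentiability of $f_{\smoothparam_{1,\iter}}$ to get $\<\nabla f_{\smoothparam_{1,\iter}}(\optvar[\iter]),\optvar[\iter]-\optvar^*\>\ge f_{\smoothparam_{1,\iter}}(\optvar[\iter])-f_{\smoothparam_{1,\iter}}(\optvar^*)$; Jensen to get $f_{\smoothparam_{1,\iter}}(\optvar[\iter])\ge f(\optvar[\iter])$; the $\lipobj(\statprob)$-Lipschitz continuity of $f=\E_\statprob[F(\cdot;\statrv)]$ (Assumption~\ref{assumption:lipschitz-ae} plus Jensen) together with $\E[\ltwo{\perturbrv_1}]\le\sqrt{\E[\ltwo{\perturbrv_1}^2]}=\sqrt{\dim}$ for each admissible $\smoothingdist_1$ to get $f_{\smoothparam_{1,\iter}}(\optvar^*)\le f(\optvar^*)+\smoothparam_{1,\iter}\lipobj(\statprob)\sqrt{\dim}$; and, for the bias, $\ltwo{\optvar[\iter]-\optvar^*}\le\radius$ (from $1$-strong convexity of $\prox$ and $\divergence(\optvar^*,\optvar[\iter])\le\half\radius^2$) to get $\<b^\iter,\optvar[\iter]-\optvar^*\>\ge-\radius\,\ltwo{b^\iter}$. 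Combining, dividing by $\totaliter$, using convexity of $f$ for $\what{\optvar}(\totaliter)$, and taking expectations (so $\sum_\iter\E[\<\xi^\iter,\optvar[\iter]-\optvar^*\>]=0$ by the conditioning) yields
\[
  \E[f(\what{\optvar}(\totaliter))-f(\optvar^*)] \le \frac{1}{\totaliter}\left[\frac{\radius^2}{2\stepsize[\totaliter]} + \half\sum_{\iter=1}^{\totaliter}\stepsize[\iter]\,\E[\ltwo{\subgrad[\iter]}^2] + \lipobj(\statprob)\sqrt{\dim}\sum_{\iter=1}^{\totaliter}\smoothparam_{1,\iter} + \radius\sum_{\iter=1}^{\totaliter}\E[\ltwo{b^\iter}]\right].
\]

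It then remains to plug in the prescribed schedules. The crucial observation is that $\smoothparam_{2,\iter}/\smoothparam_{1,\iter}=1/(\dim^2\iter)\le1/\dim^2$, hence $\sqrt{\smoothparam_{2,\iter}/\smoothparam_{1,\iter}}\,\dim\le1$, so $\E[\ltwo{\subgrad[\iter]}^2]\le c'\lipobj(\statprob)^2\dim\log(2\dim)$ uniformly in $\iter$---that is, $\subgrad[\iter]$ behaves like a stochastic gradient with effective Lipschitz constant $\lipobj(\statprob)\sqrt{\dim\log(2\dim)}$---while $\ltwo{b^\iter}\le c''\lipobj(\statprob)/(\sqrt{\dim}\,\iter)$ since $\E[\ltwo{\perturbrv_2}^3]\lesssim\dim^{3/2}$ for each admissible $\smoothingdist_2$. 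With $\stepsize[\iter]=\stepsize\,\radius/(\lipobj(\statprob)\sqrt{\dim\log(2\dim)}\sqrt{\iter})$ and $\sum_{\iter\le\totaliter}\iter^{-1/2}\le2\sqrt{\totaliter}$, the first two bracketed terms are each of order $\max\{\stepsize,\stepsize^{-1}\}\radius\lipobj(\statprob)\sqrt{\dim\log(2\dim)}\sqrt{\totaliter}$; the third is $\smoothparam\radius\lipobj(\statprob)\sqrt{\dim}\sum_{\iter\le\totaliter}\iter^{-1}\le\smoothparam\radius\lipobj(\statprob)\sqrt{\dim}\,(1+\log\totaliter)$; and the fourth is $\le c''\radius\lipobj(\statprob)(1+\log\totaliter)/\sqrt{\dim}$, which for all $\totaliter\ge1$, $\dim\ge1$ is dominated up to a universal constant by the leading $\radius\lipobj(\statprob)\sqrt{\dim\log(2\dim)}/\sqrt{\totaliter}$ term and so is absorbed into $c$. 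Dividing by $\totaliter$ and consolidating $1+\log\totaliter\lesssim\log(2\totaliter)$ gives~\eqref{eqn:main-convergence-general}.

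The main obstacle is the interplay of the moving surrogate and the bias: one must check that both the surrogate-drift error $\sum_\iter\smoothparam_{1,\iter}\lipobj(\statprob)\sqrt{\dim}$ and the bias error $\radius\sum_\iter\ltwo{b^\iter}$ stay, after division by $\totaliter$, at the claimed $\order(\smoothparam\radius\lipobj(\statprob)\sqrt{\dim}\,\log(2\totaliter)/\totaliter)$ level---which is exactly why the schedule uses $\smoothparam_{1,\iter}\propto\iter^{-1}$ and the much faster $\smoothparam_{2,\iter}\propto\dim^{-2}\iter^{-2}$; the latter also tames the extra $\sqrt{\smoothparam_{2,\iter}/\smoothparam_{1,\iter}}\,\dim$ factor in Lemma~\ref{lemma:difference-to-gradient-general} so the $\sqrt{\dim\log(2\dim)}$ rate survives. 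A secondary point is justifying the uniform iterate bound $\ltwo{\optvar[\iter]-\optvar^*}\le\radius$ used to control $\<b^\iter,\optvar[\iter]-\optvar^*\>$, which comes from $1$-strong convexity of $\prox$ and the curvature bound in Assumption~\ref{assumption:prox}. Modulo these points, the argument is a routine adaptation of the biased-gradient mirror-descent analysis already underlying Theorem~\ref{theorem:main-convergence}.
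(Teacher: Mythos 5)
Your argument is correct and follows essentially the same route as the paper's own proof: decompose $\subgrad[\iter]$ into $\nabla f_{\smoothparam_{1,\iter}}(\optvar[\iter])$ plus bias and zero-mean noise via Lemma~\ref{lemma:difference-to-gradient-general}, run the mirror-descent regret analysis against the moving smoothed surrogates (your Abel-summation step re-derives Lemma~\ref{lemma:linear-regret}), transfer back to $f$ via the smoothing error $f \le f_{\smoothparam} \le f + \smoothparam\lipobj\sqrt{\dim+2}$, bound the second moment with Lemma~\ref{lemma:difference-to-gradient-general} and the third moment with Lemma~\ref{lemma:vector-moments}, and plug in the schedules (the paper tracks the $d\sqrt{\smoothparam_{2,\iter}/\smoothparam_{1,\iter}} = 1/\sqrt{\iter}$ decay as a separate $\log(2\totaliter)/\totaliter$ contribution, while you bound it by $1$ and fold it into the leading $\sqrt{\totaliter}$ term, which is a harmless coarsening).
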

\noindent
The proof of Theorem~\ref{theorem:main-convergence-general} roughly
follows that of Theorem~\ref{theorem:main-convergence}, except that we
prove that the sequence $\optvar[\iter]$ approximately minimizes the
sequence of smoothed functions $f_{\smoothparam_{1,\iter}}$ rather
than $f$. However, for small $\smoothparam_{1,\iter}$, these two
functions are quite close, which combined with the estimates from
Lemma~\ref{lemma:difference-to-gradient-general} gives the result.
We give the full argument in
Section~\ref{appendix:proof-main-convergence-general}.

Theorem~\ref{theorem:main-convergence-general} shows that the convergence rate
of our two-point stochastic gradient algorithm for general non-smooth
functions is (at worst) a factor of $\sqrt{\log \dim}$ worse than the rate for
smooth functions in Corollary~\ref{corollary:ltwo}.  Notably, the rate of
convergence here has substantially better dimension dependence than previously
known results~\cite{AgarwalDeXi10, Nesterov11, GhadimiLa13}.

%% It is interesting
%% to note, additionally, that the difference between smooth and non-smooth
%% optimization with only functional evaluations as feedback appears to be
%% (nearly) negligible. Using carefully constructed random perturbations, we can
%% achieve rates of convergence of $\radius \lipobj \sqrt{\dim} /
%% \sqrt{\totaliter}$ in both cases, up to logarithmic factors in $\dim$.

% Local Variables:
% TeX-master: "zero-order"
% End:

\section{Lower bounds on zero-order optimization}
\label{sec:lower-bounds}

Thus far, we have presented two main results
(Theorems~\ref{theorem:main-convergence}
and~\ref{theorem:main-convergence-general}) that provide achievable
rates for perturbation-based gradient procedures.  It is natural to
wonder whether or not these rates are sharp.  In this section, we show
that our results are---in general---unimprovable by more than a
constant factor (a logarithmic factor in dimension in the setting of
Corollary~\ref{corollary:p-norms}).  These results show that \emph{no}
algorithm exists that can achieve a faster convergence rate than those
we have presented under the oracle
model~\eqref{eqn:observation-model}.

We begin by describing the notion of minimax error.  Let $\fnclass$ be
a collection of pairs $(F, \statprob)$, each of which defines an
objective function of the form~\eqref{eqn:objective}.  Let
$\algclass_\totaliter$ denote the collection of all algorithms that
observe a sequence of data points $(\obs[1], \dots, \obs[\totaliter])
\subset \R^2$ with $\obs[\iter] = [F(\optvar[\iter], \statrv[\iter]) ~
  F(\altoptvar[\iter], \statrv[\iter])]$ and return an estimate
$\what{\optvar}(\totaliter) \in \optdomain$.  Given an algorithm $\alg
\in \algclass_\totaliter$ and a pair $(F,\statprob) \in \fnclass$, we
define the optimality gap
\begin{equation*}
  \err_\totaliter(\alg, F, \statprob, \optdomain) \defeq
  f(\what{\optvar}(\totaliter)) - \inf_{\optvar \in \optdomain}
  f(\optvar) = \E_\statprob\big[F(\what{\optvar}(\totaliter);
    \statrv)\big] - \inf_{\optvar \in \optdomain}
  \E_\statprob\left[F(\optvar; \statrv)\right],
\end{equation*}
where $\what{\optvar}(\totaliter)$ is the output of algorithm $\alg$
on the sequence of observed function values.  The
expectation of this random variable defines the \emph{minimax error}
\begin{equation}
  \label{eqn:minimax-opt-err-def}
  \mxerr_\totaliter(\fnclass, \optdomain) \defeq \inf_{\alg
    \in \algclass_\totaliter} \sup_{(F, \statprob) \in
    \fnclass} \E[\err_\totaliter(\alg, F, \statprob,
    \optdomain)],
\end{equation}
where the expectation is taken over the observations $(\obs[1], \ldots,
\obs[\totaliter])$ and any additional randomness in $\alg$.  This quantity
measures the performance of the best algorithm in
$\algclass_\totaliter$, where performance is required to be uniformly
good over the class $\fnclass$.

We now turn to the statement of our lower bounds, which are based on
simple choices of the classes $\fnclass$.  For a given
$\ell_p$-norm $\norm{\cdot}_p$, we consider the class of linear
functionals
\begin{align*}
  \fnclass_{\lipobj,p} & \defeq \{ (F, \statprob) \, \mid \,
  F(\optvar; \statval) = \< \optvar, \statval \>
  ~~ \mbox{with} ~~ \E_\statprob[\norm{\statrv}_p^2] \leq
  \lipobj^2 \big \}.
  % \label{EqnTypeB}
\end{align*}
Each of these function classes satisfy
Assumption~\ref{assumption:lipschitz-ae} by construction, and
moreover, $\nabla F(\cdot; \statval)$ has Lipschitz constant 0 for all
$\statval$.  We state each of our lower bounds assuming that the
domain $\optdomain$ is equal to some $\ell_q$-ball of radius
$\radius$, that is, $\optdomain = \{\optvar \in \R^d \mid
\norm{\optvar}_q \le \radius\}$.  Our first result considers the case
$p = 2$ with domain $\optdomain$ an arbitrary $\ell_q$-ball with $q \ge
1$, so we measure gradients in the $\ell_2$-norm.
\begin{proposition}
  \label{proposition:ltwo-lower-bound}
%%   For the class $\fnclass_{\lipobj,2}$ and $\optdomain = \{\optvar \in \R^d
%%   \mid \norm{\optvar}_q \le \radius\}$, we have
%%   \begin{equation}
%%     \label{eqn:ltwo-lower-bound}
%%     \mxerr_\totaliter(\fnclass_{\lipobj,2}, \optdomain)
%%     \ge \frac{1}{12}
%%     \left(1 - \frac{1}{q}\right) \frac{\lipobj \radius}{\sqrt{\totaliter}}
%%     \max_{d_0 \in \{1, \ldots, d\}}
%%     \left\{d_0^{1 - 1/q} \min\left\{1, \sqrt{\totaliter / d_0}\right\}\right\}.
%%   \end{equation}
  For the class $\fnclass_{\lipobj,2}$ and $\optdomain = \{\optvar \in \R^d
  \mid \norm{\optvar}_q \le \radius\}$, we have
  \begin{equation}
    \label{eqn:ltwo-lower-bound}
    \mxerr_\totaliter(\fnclass_{\lipobj,2}, \optdomain)
    \ge \frac{1}{12}
    \left(1 - \frac{1}{q}\right) \frac{\lipobj \radius}{
      \sqrt{\totaliter}}
    \min\left\{d^{1 - 1/q}, \totaliter^{1-1/q}\right\}.
  \end{equation}
%%   and any $d_0 \le d$, we have
%%   \begin{equation}
%%     \label{eqn:ltwo-lower-bound}
%%     \mxerr_\totaliter(\fnclass_{\lipobj,2}, \optdomain)
%%     \ge \frac{1}{12}
%%     \left(1 - \frac{1}{q}\right) \frac{d_0^{1 - 1/q}\lipobj \radius}{
%%       \sqrt{\totaliter}}
%%     \min\left\{1, \sqrt{\totaliter / d_0}\right\}.
%%   \end{equation}
\end{proposition}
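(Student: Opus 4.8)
The plan is to reduce the optimization problem over linear functionals to a statistical testing (hypothesis discrimination) problem using the standard ``needle in a haystack'' construction for minimax lower bounds, and then to apply an information-theoretic bound (Le Cam's or Fano's method). First I would set up a family of hard instances parameterized by a sign vector $v \in \{-1,+1\}^d$: let $F_v(\optvar; \statval) = \langle \optvar, \statval\rangle$ where $\statrv$ is a random vector whose mean is $\delta v$ for a small separation parameter $\delta > 0$ (so that $f_v(\optvar) = \delta \langle v, \optvar\rangle$), and whose distribution has enough noise that the two function-value observations $\obs[\iter]$ carry limited information about $v$. Concretely I would take each coordinate of $\statrv$ to be an independent random sign scaled appropriately so that $\E[\norm{\statrv}_2^2] \le \lipobj^2$ forces the per-coordinate magnitude to be $O(\lipobj/\sqrt{d})$, and then choose $\delta$ comparable to that scale. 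The minimizer of $f_v$ over the $\ell_q$-ball of radius $\radius$ is at the vertex $-\radius d^{-1/q} v$ (up to normalization of $\ell_q$-balls), and the optimality gap incurred by outputting a point that disagrees with $v$ in a constant fraction of coordinates is of order $\delta \radius d^{1-1/q}$ (using $\norm{v}_q = d^{1/q}$ and that the linear functional evaluated at the $\ell_q$-ball extremizer scales like $\radius \norm{\text{mean}}_{q'}$). This is where the factor $(1 - 1/q)$ and the $d^{1-1/q}$ term will come from.

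Next I would carry out the information-theoretic step. The key observation is that from the pair of observations $\obs[\iter] = [\langle \optvar^\iter, \statrv[\iter]\rangle, \ \langle \altoptvar^\iter, \statrv[\iter]\rangle]$ the algorithm learns only two scalar projections of $\statrv[\iter]$, so after $\totaliter$ rounds it has at most $O(\totaliter)$ real numbers, each a noisy linear measurement of a vector whose signal is $\delta v$. I would bound the mutual information (or the KL divergence between the distribution under $v$ and under $v'$) per round: each observation contributes information of order $\delta^2 / \sigma^2$ per ``active'' coordinate, and with $\delta \asymp \lipobj/\sqrt{d}$ this is $O(\lipobj^2/(d\sigma^2))$-scale, times $\totaliter$ rounds. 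Using Fano's inequality over the packing $\{-1,+1\}^d$ (a constant-rate packing in Hamming distance exists with $2^{\Omega(d)}$ codewords), the algorithm cannot identify $v$ — hence cannot drive the optimality gap to zero — unless $\totaliter$ is large enough; quantitatively, recovery fails unless $\totaliter \gtrsim d$, which produces the $\sqrt{\totaliter}$ in the denominator together with the $\min\{d^{1-1/q}, \totaliter^{1-1/q}\}$ truncation (the $\totaliter^{1-1/q}$ branch arising when $\totaliter < d$, where one effectively plays the same game in a $\totaliter$-dimensional subspace). I would tune $\delta$ as a function of $\totaliter$ and $d$ to optimize the resulting bound, landing on $\delta \asymp \lipobj \min\{d,\totaliter\}^{-1/2} / \sqrt{\totaliter}$-type scaling, and then read off the claimed inequality with the explicit constant $1/12$.

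The main obstacle, and the part requiring the most care, is handling the \emph{adaptivity} of the algorithm: the query points $\optvar^\iter, \altoptvar^\iter$ depend on past observations, so the $\totaliter$ observed scalars are not i.i.d.\ linear measurements of $\statrv$ — one must argue that sequential, adaptively-chosen projections still cannot extract more than $O(\totaliter)$ scalars' worth of information about $v$. The clean way to do this is to condition and use the chain rule for mutual information / a per-step data-processing argument: $I(v; \obs[1:\totaliter]) \le \sum_\iter I(v; \obs[\iter] \mid \obs[1:\iter-1])$, and each conditional term is bounded by the information in two adaptively chosen linear projections of a single noise vector, which is controlled uniformly over the choice of projection directions by the sub-Gaussian (or bounded) noise. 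A secondary technical point is getting the geometry of the $\ell_q$-ball right — computing $\inf_{\norm{\optvar}_q \le \radius} \langle v, \optvar\rangle$ exactly (it equals $-\radius \norm{v}_{q'} = -\radius d^{1-1/q}$ by Hölder, with $1/q + 1/q' = 1$) and verifying that the gap from a coordinate-wise-wrong estimate degrades proportionally — but this is a routine Hölder/duality computation rather than a genuine difficulty.
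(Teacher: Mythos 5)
Your proposal reduces to a hypothesis test over $\{-1,+1\}^d$ in essentially the same way the paper does (same hard instance $F(\optvar;\statval)=\langle\optvar,\statval\rangle$ with $\statrv\sim\normal(\delta\packval,\sigma^2 I)$, same $\ell_q$-geometry lemma showing a per-coordinate sign error costs $(1-1/q)\,\delta R/d^{1/q}$, same $d_0=\min\{d,\totaliter\}$ subspace trick for the $\totaliter<d$ branch, and the same chain-rule handling of adaptivity). Where you diverge is the information-theoretic step: you propose Fano over a constant-rate packing of the hypercube, whereas the paper uses the Assouad / per-coordinate Le Cam route: bound $\inf_\psi\sum_j\P(\psi_j\neq\packrv_j)$ by $\tfrac12\sum_j(1-\|\statprob_{+j}-\statprob_{-j}\|_{\rm TV})$, then Cauchy--Schwarz, then Pinsker and the chain rule for KL (Lemma~\ref{lemma:coordinate-tv-bound}). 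The paper's route is worth noting because the dimension-dependence comes out of a clean algebraic identity: when you sum the per-coordinate KL contributions over $j$, the rank-two structure of each query pair collapses to $\tr\big((\Sigma^\iter)^{-1}\Sigma^\iter\big)=2$, giving exactly $\sum_j\|\statprob_{+j}-\statprob_{-j}\|_{\rm TV}^2\le 2\totaliter\delta^2/\sigma^2$ with no wasted factors.

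There is a genuine gap in your sketch at exactly this point. You bound the information by ``$\delta^2/\sigma^2$ per active coordinate,'' which is ambiguous between two readings that differ by a factor of $d$. If one reads it as the \emph{pairwise} KL between two codewords at Hamming distance $\Theta(d)$ (the natural thing to plug into Fano), each round contributes $\asymp d\,\delta^2/\sigma^2$, forcing $\delta^2\lesssim\sigma^2/\totaliter$, which, after enforcing $\E\|\statrv\|_2^2\le G^2$ (so $\sigma^2\asymp G^2/d$), yields a bound that is a factor $\sqrt d$ weaker than the claim. To recover the claimed rate via Fano, you must bound the \emph{mutual information} $I(\packrv;\obs[\iter]\mid\obs[1:\iter-1])$ by the two-dimensional Gaussian channel capacity $\log(1+\delta^2/\sigma^2)$ per round, i.e.\ you must exploit that $\obs[\iter]$ is a rank-two projection of $\statrv[\iter]$; the crude pairwise-KL bound discards this. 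Your final $\delta$ scaling reflects this slip: you write $\delta\asymp G\,\min\{d,\totaliter\}^{-1/2}/\sqrt\totaliter$, but enforcing both the information constraint and the moment constraint gives $\delta\asymp G/\max\{\sqrt d,\sqrt\totaliter\}$ (this is exactly the paper's choice $\delta^2=(G^2/9)\min\{1/\totaliter,1/d\}$); your formula is smaller by $\sqrt{d}$ when $\totaliter\ge d$ and would not yield the proposition's rate. So the high-level plan is viable, but the step that actually produces the correct dimension dependence --- either the channel-capacity bound in the Fano version, or the trace identity in the Assouad version --- is not pinned down, and as written the numerics come out wrong.
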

\noindent Combining the lower bound~\eqref{eqn:ltwo-lower-bound} with
our algorithmic schemes in Section~\ref{sec:algorithms} shows that they are
optimal up to constant factors.  More specifically, for $q \ge 2$, the
$\ell_2$-ball of radius $d^{1/2 - 1/q} \radius$ contains the
$\ell_q$-ball of radius $\radius$, so
Corollary~\ref{corollary:ltwo} provides an upper bound on the minimax
rate of convergence of order $\radius \lipobj \sqrt{\dim} \dim^{1/2 -
  1/q} / \sqrt{\totaliter} = \radius \lipobj d^{1 - 1/q} /
\sqrt{\totaliter}$ in the smooth case, while for $\totaliter \ge d$,
Proposition~\ref{proposition:ltwo-lower-bound} provides the lower
bound $\radius \lipobj d^{1 - 1/q} / \sqrt{\totaliter}$.
Theorem~\ref{theorem:main-convergence-general}, providing a rate of
$\radius \lipobj \sqrt{\dim \log \dim} / \sqrt{\totaliter}$ in the
general (non-smooth) case, is also tight to within logarithmic
factors.  Consequently, the stochastic gradient descent
algorithm~\eqref{eqn:mirror-descent-update} coupled with the sampling
strategies~\eqref{eqn:gradient-estimator}
and~\eqref{eqn:gradient-estimator-general} is optimal for stochastic
problems with two-point feedback.

We can prove a parallel lower bound that
applies when using multiple ($\numobs \ge 2$) function
evaluations in each iteration, that is, in the context of
Corollary~\ref{corollary:ltwo-multiple}. In this case, an inspection
of the proof of Proposition~\ref{proposition:ltwo-lower-bound}
shows that we have the bound
\begin{equation}
  \label{eqn:multi-point-lower-bound}
  \mxerr_\totaliter(\fnclass_{\lipobj,2}, \optdomain)
  \ge \frac{1}{10} \left(1 - \frac{1}{q}\right)
  \frac{\lipobj \radius}{\sqrt{\numobs \totaliter}}
  \min\left\{d^{1 - \frac{1}{q}}, \totaliter^{1 - \frac{1}{q}}\right\}.
\end{equation}
We show this in the remarks following the proof of
Proposition~\ref{proposition:ltwo-lower-bound} in
Section~\ref{sec:proof-ltwo-lower-bound}. In particular, we see that the
minimax rate of convergence over the $\ell_2$-ball is $\radius \lipobj
\sqrt{d / \numobs} / \sqrt{\totaliter}$, which approaches the 
full information minimax rate
of convergence, $\radius \lipobj / \sqrt{\totaliter}$,
as $\numobs \to d$.

For our second lower bound, we investigate the minimax rates at which
it is possible to solve stochastic convex optimization problems in
which the objective is Lipschitz continuous in the $\ell_1$-norm, or
equivalently, in which the gradients are bounded in
$\ell_\infty$-norm.  As noted earlier, such scenarios are suitable for
high-dimensional problems~\cite[e.g.][]{NemirovskiJuLaSh09}.
\begin{proposition}
  \label{proposition:lone-lower-bound}
  For the class $\fnclass_{\lipobj,\infty}$ with $\optdomain =
  \{\optvar \in \R^d \mid \lone{\optvar} \le \radius\}$, we have
  \begin{equation*}
    \mxerr_\totaliter(\fnclass_{\lipobj,\infty}, \optdomain)
    \ge \frac{1}{24} \frac{\lipobj \radius}{\sqrt{\totaliter}}
    \, \min\left\{\frac{\sqrt{\totaliter}}{\sqrt{\log(3\totaliter)}},
    \frac{\sqrt{\dim}}{\sqrt{\log(3\dim)}}\right\}.
  \end{equation*}
\end{proposition}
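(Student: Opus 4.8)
The plan is the standard information‑theoretic recipe for minimax lower bounds in stochastic optimization: restrict to a finite, carefully chosen subfamily of $\fnclass_{\lipobj,\infty}$, reduce $\totaliter$ rounds of two‑point optimization to a multiway hypothesis test, invoke Fano's inequality, and control the mutual information that the observations carry about the hidden hypothesis. Concretely, set $d' \defeq \min\{\dim,\totaliter\}$ and index a subfamily by $v$ ranging over the $2d'$ signed coordinate vectors $\{\pm e_k : 1 \le k \le d'\}$ (the other coordinates inert). Each instance uses the linear loss $F(\optvar;\statval) = \<\optvar,\statval\>$ — so it lies in $\fnclass_{\lipobj,\infty}$ and has gradient Lipschitz constant $0$ — with $\statprob_v$ the law of $\statrv = \lipobj\delta\,v + W$, where $\delta \in (0,1]$ is a parameter to be chosen and $W$ is a continuous, mean‑zero noise vector with i.i.d.\ coordinates, rescaled so that $\E_{\statprob_v}[\linf{\statrv}^2] \le \lipobj^2$. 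The continuity of $W$ is essential: it prevents an algorithm from reading $\statrv$ off exactly from a single generic linear measurement. Because $\linf{W}$ typically exceeds the coordinatewise scale of $W$ by a factor growing in $d'$, this rescaling forces the noise to be small coordinatewise, and this is the source of the logarithmic factors in the final bound. The objective is $f_v(\optvar) = \lipobj\delta\<\optvar,v\>$, whose unique minimizer over $\optdomain = \{\lone{\optvar} \le \radius\}$ is the vertex $-\radius v$, with optimal value $-\lipobj\delta\radius$.

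For the reduction, note that for $v = \sigma e_k$ and any $\what\optvar \in \optdomain$ the optimality gap is $\lipobj\delta(\radius + \sigma\what\optvar_k) \ge 0$; if it is below $\tfrac12\lipobj\delta\radius$ then $|\what\optvar_k| > \radius/2$, so (since $\lone{\what\optvar}\le\radius$) the coordinate $k$ is the unique one with $|\what\optvar_k| > \radius/2$ and its sign recovers $\sigma$. Hence any algorithm whose worst‑case expected gap over the subfamily is below $\tfrac14\lipobj\delta\radius$ yields, by thresholding its output and applying Markov's inequality over the uniform prior on $v$, an estimator $\what v$ with $\Pr[\what v \ne v] \le \tfrac12$; Fano's inequality then forces $I\big(v;\obs[1],\dots,\obs[\totaliter]\big) \gtrsim \log d'$.

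The crux is to show the two‑point oracle reveals only $O\big(\totaliter\,\delta^2(\log d')^2/d'\big)$ bits about $v$. Expanding the total information by the chain rule into per‑round conditional informations, and using that each $\obs[\iter]$ is a deterministic past‑dependent function of $\statrv[\iter]$ built from only \emph{two} linear functionals, the data‑processing inequality reduces the per‑round term to the mutual information between $v$ and a two‑dimensional projection of $\lipobj\delta\,v + W$; a $\chi^2$/second‑moment computation bounds this by a constant times $\lipobj^2\delta^2\sigma^{-2}$ times a trace that, when the current posterior over the spike location is close to uniform, is $O(1/d')$. This $1/d'$ — reflecting that two measurements expose only an $O(1)$‑dimensional slice of the $d'$ candidate directions — is exactly what produces the $\sqrt{d'}$ in the rate. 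The main obstacle is adaptivity: an algorithm could in principle inflate that trace to $\Theta(1)$ by concentrating both its queries and its posterior on a few coordinates, so one must argue it cannot do so before it has already accumulated the $\log d'$ bits needed to locate the spike. I would handle this amortized rather than term‑by‑term: for any fixed pair of query directions the relevant per‑coordinate quantities sum to a constant over the $d'$ coordinates, so a posterior spread over many coordinates automatically keeps the per‑round trace near $1/d'$, and posterior concentration can only follow information already transferred — a self‑limiting feedback that, combined with the a priori ceiling $I(v;\obs) \le \log(2d')$, yields the claimed total bound.

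Finally, choosing $\delta$ to be the largest value at most $1$ consistent with $\totaliter\,\delta^2(\log d')^2/d' \lesssim \log d'$, i.e.\ $\delta \asymp \min\{1,\ \sqrt{d'/(\totaliter\log d')}\}$, Fano guarantees test error bounded away from $0$, so $\mxerr_\totaliter \gtrsim \lipobj\delta\radius \asymp \lipobj\radius\min\{1,\ \sqrt{d'/(\totaliter\log d')}\}$. Substituting $d' = \min\{\dim,\totaliter\}$ and tracking constants through the Fano step and the noise calibration gives exactly $\mxerr_\totaliter(\fnclass_{\lipobj,\infty},\optdomain) \ge \tfrac{1}{24}\,\tfrac{\lipobj\radius}{\sqrt\totaliter}\min\big\{\sqrt\totaliter/\sqrt{\log(3\totaliter)},\ \sqrt\dim/\sqrt{\log(3\dim)}\big\}$; the parallel $\numobs$‑point version of Proposition~\ref{proposition:lone-lower-bound} would follow the same way, with the per‑round information bound improved by a factor $\numobs$.
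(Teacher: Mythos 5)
Your hard family and the optimization-to-testing reduction match the paper's: signed coordinate vectors $\{\pm e_j\}$, linear losses with a spiked mean plus noise calibrated so that $\E[\linf{\statrv}^2]\le\lipobj^2$ (which is indeed the source of the $\log(3\dim)$ factors), and the observation that a small optimality gap identifies the spiked coordinate and its sign. The gap is in the information-theoretic core. You reduce to a $2d'$-way test and invoke Fano, which requires showing that $\totaliter$ \emph{adaptively chosen} two-point queries extract only $O(\totaliter\delta^2(\log d')^2/d')$ nats about $v$. You correctly identify that the per-round conditional mutual information can be inflated to $\Theta(1)$ by an algorithm that concentrates its queries on a posterior-favored coordinate, but your resolution---an ``amortized,'' ``self-limiting feedback'' argument---is a heuristic, not a proof: nothing in the sketch supplies the lemma that converts ``posterior concentration can only follow information already transferred'' into a quantitative bound on the total mutual information. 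That is precisely the hard part of any Fano-based argument for adaptive sampling, and it is also far from clear that the constant $1/24$ and the exact $\log(3\cdot)$ forms would survive a Fano step.

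The paper sidesteps adaptivity entirely by not using Fano. It runs a per-coordinate Le Cam/Assouad argument: the minimax error is lower bounded by $\frac{\delta\radius}{2d}\sum_{j}\left(1-\tvnorm{\statprob_j-\statprob_{-j}}\right)$, Cauchy--Schwarz converts this to $\frac{\delta\radius}{2}\bigl(1-\frac{1}{\sqrt d}(\sum_j\tvnorm{\statprob_j-\statprob_{-j}}^2)^{1/2}\bigr)$, and Lemma~\ref{lemma:coordinate-tv-bound} bounds each squared total variation by a sum over rounds of quadratic forms in the $j$th coordinates of the query pair. The decisive point is the algebraic identity
\begin{equation*}
\sum_{j=1}^d
\left[\begin{matrix}\optvar[\iter]_j\\ \altoptvar[\iter]_j\end{matrix}\right]^\top
(\Sigma^\iter)^{-1}
\left[\begin{matrix}\optvar[\iter]_j\\ \altoptvar[\iter]_j\end{matrix}\right]
=\tr\left((\Sigma^\iter)^{-1}\Sigma^\iter\right)=2,
\end{equation*}
valid for \emph{every} adaptively chosen query pair, so that $\sum_j\tvnorm{\statprob_j-\statprob_{-j}}^2\le 2\totaliter\delta^2/\stddev^2$ with no argument whatsoever about where the algorithm concentrates its queries. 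To complete your proof you should either adopt this summed-over-coordinates decomposition or supply a genuine adaptive-Fano lemma; as written, the key step is missing.
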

\noindent
This result also demonstrates the optimality of our mirror descent
algorithms up to logarithmic factors.  Recalling
Corollary~\ref{corollary:p-norms}, the MD
algorithm~\eqref{eqn:mirror-descent-update} with $\prox(\optvar)
= \frac{1}{2(p - 1)}\norm{\optvar}_p^2$, where $p = 1 + 1 /
\log(2\dim)$, implies that $\mxerr_\totaliter(\fnclass_\lipobj,
\optdomain) \lesssim \lipobj \radius \sqrt{\dim \log(2\dim)} /
\sqrt{\totaliter}$.  On the other hand,
Proposition~\ref{proposition:lone-lower-bound} provides the lower
bound $\mxerr_\totaliter(\fnclass_\lipobj, \optdomain) \gtrsim
\lipobj \radius \sqrt{d} / \sqrt{\totaliter
  \log d}$.  These upper and lower bounds match up
to logarithmic factors in dimension.

It is worth comparing these lower bounds to the achievable rates of
convergence when full gradient information is available---that is, when one
has access to the subgradient selection $\gradfunc(\optvar; \statrv)$---and
when one has access to only a single function evaluation $F(\optvar;
\statrv)$ at each iteration.  We begin with the latter, presenting a minimax
lower bound essentially due to \citet{Shamir13} for comparison.
We denote the minimax optimization error
%% analogous to
%% the error~\eqref{eqn:minimax-opt-err-def},
%% except
using a single function evaluation
in each of $\totaliter$ iterations by $\mxerrone_\totaliter$. 
For the lower bound, we impose both Lipschitz conditions on the
functions $F$ and a variance condition on the observations
$F(\optvar; \statrv)$:
%% In the lower
%% bound, we make the following assumption on the variance of the function
%% evaluations: there exists a $\stddev < \infty$ such that for all $\optvar
%% \in \optdomain$,
%% \begin{equation*}
%%   \E[(F(\optvar; \statrv) - f(\optvar))^2] \le \stddev^2.
%% \end{equation*}
for a given variance $\stddev^2$, Lipschitz constant $\lipobj$, and
$\ell_p$-norm, we consider the family of optimization problems defined
by the class of convex losses
\begin{equation*}
  % \label{eqn:single-obs-class}
  \fnclass_{\stddev, \lipobj, p}
  \defeq \left\{(F, \statprob) \mid
  \E_\statprob[\norm{\partial F(\optvar; \statrv)}_p^2]
  \le \lipobj^2 
  ~~ \mbox{and} ~~
  \E[(F(\optvar; \statrv) - f(\optvar))^2] \le \stddev^2 \right\}.
\end{equation*}
In our proofs, we restrict this class to functions
of the form $F(\optvar; \statval) = c_1 \lone{\optvar - c_2 \statval}$,
where $c_1, c_2$ are constants chosen to guarantee the
above inclusions. %~\eqref{eqn:single-obs-class}.
By an extension of techniques of \citet[Theorem 7]{Shamir13}, we have
the following proposition.
\begin{proposition}
  \label{proposition:single-observation}
  For any $p, q \ge 1$, the class $\fnclass_{\stddev, \lipobj, p}$, and
  any $\radius > 0$ with
  $\optdomain \supset \{\optvar \in \R^d \mid \norm{\optvar}_q \le \radius\}$,
  we have
  \begin{equation*}
    \mxerrone_\totaliter(\fnclass_{\stddev,\lipobj,q}, \optdomain)
    \ge \frac{1}{4} \min\left\{\frac{d \stddev}{\sqrt{\totaliter}},
    \lipobj \radius d^{1 - \frac{1}{p} - \frac{1}{q}}\right\}.
  \end{equation*}
\end{proposition}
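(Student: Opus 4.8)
The plan is an information-theoretic (Assouad-type) lower bound built from a family of \emph{linear} objectives that is realized inside $\fnclass_{\stddev,\lipobj,p}$ by the permitted $\ell_1$-form losses, and engineered so that a single scalar evaluation conveys very little about the $d$-dimensional hard parameter.

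\emph{The hard family.} Put $\rho \defeq \radius\,d^{-1/q}$, so that the hypercube $\rho\{-1,1\}^d$ lies inside the $\ell_q$-ball of radius $\radius$, hence inside $\optdomain$. Index instances by a sign vector $v\in\{-1,1\}^d$: let the data $\statrv\in\R^d$ have independent coordinates $\statrv_j = Mv_j + \xi_j$ with $\xi_j$ i.i.d.\ $\normal(0,\nu^2)$, and take $F_v(\optvar;\statval)\defeq c_1\lone{\optvar-c_2\statval}$, where $c_1,c_2,\nu$ are free and $M\to\infty$. For $M$ large, $\mathrm{sign}(\optvar_j-c_2\statrv_j)=-\mathrm{sign}(v_j)$ for all $j$ and all $\optvar\in\optdomain$ outside an event of probability super-exponentially small in $M$, and on that event
\[
  F_v(\optvar;\statrv) = c_1 c_2 M d + c_1\<v,\optvar\> - c_1 c_2\<v,\xi\>.
\]
Thus, up to an $M$-vanishing error, $f_v(\optvar)=c_1 c_2 M d + c_1\<v,\optvar\>$ is linear with minimizer the corner $-\rho v$, the gap $f_v(\optvar)-\min_\optdomain f_v$ is (up to constants) $c_1\rho$ times the number of coordinates on which $\mathrm{sign}(\optvar_j)$ disagrees with $-v_j$, the residual $F_v(\optvar;\statrv)-f_v(\optvar)=-c_1 c_2\<v,\xi\>$ is $\normal(0,c_1^2 c_2^2 d\,\nu^2)$, and $\E[\norm{\nabla F_v}_p^2]=c_1^2 d^{2/p}$. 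Hence $F_v\in\fnclass_{\stddev,\lipobj,p}$ provided $c_1^2 d^{2/p}\le\lipobj^2$ and $c_1^2 c_2^2 d\,\nu^2\le\stddev^2$.

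\emph{Reduction and the information bound.} Apply the averaged form of Assouad's lemma with this per-coordinate loss: writing $P_v$ for the law of the adaptive transcript $(\optvar[1],\obs[1],\dots,\optvar[\totaliter],\obs[\totaliter])$ (with scalar $\obs[\iter]=F_v(\optvar[\iter];\statrv[\iter])$) under instance $v$, and $v^{(j)}$ for $v$ with its $j$-th sign flipped,
\[
  \mxerrone_\totaliter(\fnclass_{\stddev,\lipobj,p},\optdomain)
  \;\gtrsim\; c_1\rho\, d\cdot\Big(1-\sqrt{\tfrac1d\textstyle\sum_{j}\mathrm{KL}(P_v\,\|\,P_{v^{(j)}})}\Big).
\]
It remains to choose $\nu$ so that the average KL is at most a small constant. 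By the chain rule over rounds together with the data-processing inequality, flipping $v_j$ shifts the mean of $\obs[\iter]$ by exactly $2c_1(\optvar[\iter])_j$ while leaving its noise variance $c_1^2 c_2^2 d\nu^2$ unchanged, so
\[
  \mathrm{KL}(P_v\,\|\,P_{v^{(j)}}) \;\le\; \frac{2}{c_2^2 d\,\nu^2}\sum_{\iter=1}^\totaliter (\optvar[\iter])_j^2 ;
\]
summing over $j$ and using $\sum_j(\optvar[\iter])_j^2=\ltwo{\optvar[\iter]}^2\le D_2^2:=\sup_{\optvar\in\optdomain}\ltwo{\optvar}^2$ gives $\tfrac1d\sum_j\mathrm{KL}(P_v\|P_{v^{(j)}})\le 2\totaliter D_2^2/(c_2^2 d^2\nu^2)$. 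This is precisely where the single-evaluation restriction is used: a scalar observation is only a one-dimensional linear readout of the $d$-dimensional hidden signal, so the algorithm's total ``probing energy'' $\sum_\iter\ltwo{\optvar[\iter]}^2\le\totaliter D_2^2$ must be shared across all $d$ coordinates, which is the source of the extra dimension factor relative to the gradient-feedback rate.

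\emph{Optimization and the main obstacle.} Choose $\nu^2\asymp \totaliter D_2^2/(c_2^2 d^2)$ so the information term is $\le\tfrac12$; the variance constraint then becomes $c_1\lesssim\stddev\sqrt d/(\sqrt\totaliter\,D_2)$, and with $c_1\le\lipobj d^{-1/p}$ the optimal choice is $c_1\asymp\min\{\lipobj d^{-1/p},\ \stddev\sqrt d/(\sqrt\totaliter\,D_2)\}$. Substituting into $\mxerrone_\totaliter\gtrsim c_1\rho d = c_1\radius d^{1-1/q}$ and using $D_2\asymp\radius d^{(1/2-1/q)_+}$ for the $\ell_q$-ball produces the two regimes: when the Lipschitz constraint binds one gets $\lipobj\radius d^{1-1/p-1/q}$ — which is also, up to a constant, the worst-case sub-optimality of any fixed point against a linear member of $\fnclass_{\stddev,\lipobj,p}$, a ``zero-information'' floor that takes over for small $\totaliter$ — and when the variance constraint binds one gets $d\stddev/\sqrt\totaliter$; tracking the constants through Assouad's inequality yields the stated $\tfrac14$. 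The hard part is the information step: obtaining the $d^2$ (not $d$) in the denominator of the averaged KL — i.e.\ recognizing that the correct complexity measure is the \emph{shared} probing energy $\sum_\iter\ltwo{\optvar[\iter]}^2$, not a per-coordinate quantity — with constants sharp enough to survive the two-constraint optimization, and making the $M\to\infty$ linearization of $\lone{\optvar-c_2\statval}$ rigorous so that the linear approximation simultaneously controls $f_v$, the observation variance, and the per-round KL while keeping the instance inside $\fnclass_{\stddev,\lipobj,p}$. A further technical wrinkle, handled as in Shamir's Theorem~7, is the $\ell_q$-ball geometry for small $q$, where the algorithm can concentrate its output and one must argue more carefully (e.g.\ by reducing to an inscribed $\ell_2$-ball) that the per-coordinate loss is not evaded.
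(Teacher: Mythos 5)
Your overall strategy — an Assouad-type reduction over the hypercube $\{-1,1\}^d$ with an $\ell_1$-type hard instance whose per-coordinate loss penalizes sign errors — is the right one and matches the paper's, but the information step contains a genuine gap that the paper's construction is specifically designed to avoid.

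The gap is in the bound $\sum_j (\optvar^\iter)_j^2 = \ltwo{\optvar^\iter}^2 \le D_2^2 \defeq \sup_{\optvar \in \optdomain}\ltwo{\optvar}^2$. The algorithm class $\algclass_\totaliter$ constrains only the output $\what{\optvar}(\totaliter) \in \optdomain$, not the query points $\optvar^\iter$; moreover $\optdomain$ is only required to \emph{contain} the $\ell_q$-ball and may be arbitrarily large. So $D_2$ does not bound the algorithm's ``probing energy.'' And this is not a removable technicality: after the $M \to \infty$ linearization, the observation at query $\optvar$ is $c_1 c_2 M d - c_1\langle v,\optvar\rangle + c_1 c_2\langle v,\xi\rangle$, whose $v_j$-sensitivity $2 c_1 |\optvar_j|$ grows without bound as $|\optvar_j|\to\infty$. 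An algorithm that queries at $\optvar = c_2 M e_j$ obtains an essentially noiseless readout of $v_j$ (indeed the original $\ell_1$ function itself reveals $v_j$ immediately at such a point, since $|\optvar_j - c_2 M v_j - c_2\xi_j|$ is either $\approx c_2|\xi_j|$ or $\approx 2 c_2 M$ depending on $v_j$), so the hard family is defeated after $d$ rounds. The $\ell_1$ structure you started from does give the uniform bound $\sup_\optvar |f_v(\optvar) - f_{v^{(j)}}(\optvar)| \le 2 c_1 c_2 M$, but the $M \to \infty$ linearization discards exactly this property — the linearization is simultaneously too generous to the lower bound and too blind to the algorithm's best move.

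The paper's construction sidesteps all of this with a cleaner decoupling: it sets $\statval = (\packval, \normalrv)$ with $\normalrv \sim \normal(0,\stddev^2)$ independent and defines $F(\optvar; \statval) = \lipobj_d \lone{\optvar - \delta\packval} + \normalrv$ with a \emph{fixed} scale $\delta \le \radius d^{-1/q}$, so the additive noise never couples with the hidden sign vector or with the query. The crucial step is then the query-free estimate $\sup_{\optvar}|f_\packval(\optvar) - f_{\packval'}(\optvar)| \le \lipobj_d\lone{\delta\packval - \delta\packval'} = 2\delta\lipobj_d$ whenever $\packval,\packval'$ differ in one coordinate, so each round's observation is $\normal(f_\packval(\optvar^\iter),\stddev^2)$ and the per-round per-coordinate KL is at most $2\delta^2\lipobj_d^2/\stddev^2$ uniformly over queries in $\R^d$. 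No bound on $\ltwo{\optvar^\iter}$ is needed, the sum over $d$ coordinates and $\totaliter$ rounds is immediate, and the $d^2$ in the denominator of the averaged KL (hence the extra $\sqrt{d}$ in the lower bound) comes from the Assouad normalization rather than from any ``energy sharing'' assumption on the algorithm. This also resolves the secondary issue you noted for small $q$: the paper's per-round KL bound does not involve $D_2$ at all, so there is no $q$-dependent loss, whereas your bound $D_2 \asymp \radius d^{(1/2 - 1/q)_+}$ already yields a weaker rate $\stddev d^{3/2 - 1/q}/\sqrt{\totaliter}$ for $q<2$ even under the optimistic assumption that queries stay in the $\ell_q$-ball. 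To repair your argument you would need to replace the $M\to\infty$ linearization with the exact $\ell_1$ Lipschitz bound — but at that point you have reproduced the paper's proof.
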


Proposition~\ref{proposition:single-observation} shows that the asymptotic
difficulty of optimization grows at least quadratically with the dimension
$d$. Indeed, consider the Euclidean case ($p = q = 2$), and consider
minimizing $1$-Lipschitz convex functions over the $\ell_2$-ball. Assuming
that observations have variance 1, the minimax lower bound becomes
$\frac{1}{4}\min\{d / \sqrt{\totaliter}, 1\}$, so achieving $\epsilon$
accuracy requires $\Omega(d^2 / \epsilon^2)$ iterations.  This is
substantially worse than the complexity possible when using two function
evaluations: Corollary~\ref{corollary:ltwo} implies that the minimax rate of
convergence scales as $\sqrt{d / \totaliter}$, so $d / \epsilon^2$ iterations
are necessary and sufficient to achieve $\epsilon$ accuracy. By comparing with
Corollary~\ref{corollary:ltwo-multiple} and
Proposition~\ref{proposition:ltwo-lower-bound}, we see a phase
transition: with only a single function evaluation per sample $\statrv$, the
minimax rate is $d / \sqrt{\totaliter}$, yet with $\numobs \ge 2$ function
evaluations, it is possible to obtain rates of convergence scaling as
$\sqrt{d / \numobs} / \sqrt{\totaliter}$.

For the case of \emph{linear losses}---that is, when $F(\optvar; \statval) =
\<\optvar, \statval\>$---there is a smaller gap between convergence rates
possible using single function evaluation and those attainable with multiple
evaluations.  In the Euclidean case of the preceding paragraph, the minimax
convergence rate for linear losses scales (up to logarithmic factors) as
$\sqrt{d / \totaliter}$ when only a single evaluation is available (see,
e.g.,~\citet[Theorem 5.11]{BubeckCe12}). Similarly, optimization of linear
losses over the simplex (or the $\ell_1$-ball)---the classical bandit
problem of Lai and Robbins~\cite{Robbins52,LaiRo85}---scales to within
logarithmic factors as $\sqrt{d/\totaliter}$ in the paired evaluation case
(Corollary~\ref{corollary:p-norms} and
Proposition~\ref{proposition:lone-lower-bound}) and as $\sqrt{d/\totaliter}$
in the single evaluation case as well~\cite[Theorem 5]{AudibertBu09}. In the
linear case, then, there is not (generally) a phase transition between
single and multiple evaluations.

Returning now to a comparison with the full information case, each of
Propositions~\ref{proposition:ltwo-lower-bound}
and~\ref{proposition:lone-lower-bound} includes an additional
$\sqrt{d}$ factor as compared to analogous minimax
rates~\cite{AgarwalBaRaWa12,BeckTe03,NemirovskiJuLaSh09} applicable to
the case of full gradient information.  These $\sqrt{\dim}$ factors
disappear from the achievable convergence rates in
Corollaries~\ref{corollary:ltwo} and~\ref{corollary:p-norms} when one
uses $\subgrad[\iter] = \gradfunc(\optvar; \statrv)$ in the mirror
descent updates~\eqref{eqn:mirror-descent-update}.  Consequently, our
analysis shows that in the zero-order setting---in addition to
dependence on the radius $\radius$ and second moment $\lipobj^2$---any
algorithm must suffer at least an additional $\order(\sqrt{\dim})$
penalty in convergence rate, and optimal algorithms suffer precisely
this penalty.  In models for optimization in which there is a unit
cost for each function evaluation and a unit cost for obtaining a
single dimension of the gradient, the cost of using full gradient
information and that for using only function evaluations is identical;
in cases where performing $d$ function evaluations is substantially
more expensive than computing a single gradient, however, it is
preferable to use full gradient information if possible, even when the
cost of obtaining the gradients is somewhat nontrivial.

% -*- Mode: latex; -*-

% Local Variables:
% TeX-master: "zero-order"
% End:

\section{Convergence proofs}
\label{sec:convergence-proofs}

We provide the proofs of the convergence results from
Section~\ref{sec:algorithms} in this section, deferring more technical
arguments to the appendices.

%-------------------------
\subsection{Proof of Theorem~\ref{theorem:main-convergence}}
\label{appendix:proof-main-convergence}

Before giving the proof of Theorem~\ref{theorem:main-convergence}, we
state a standard lemma on the mirror descent iterates (see, for
example, \citet[Section 2.3]{NemirovskiJuLaSh09} or
\citet[Eq.~(4.21)]{BeckTe03}).
\begin{lemma}
  \label{lemma:linear-regret}
  Let $\{\subgrad[\iter]\}_{\iter=1}^\totaliter \subset \R^d$ be a
  sequence of vectors, and let $\optvar[\iter]$ be generated by the
  mirror descent iteration~\eqref{eqn:mirror-descent-update}. If
  Assumption~\ref{assumption:prox} holds, then for any $\optvar^* \in
  \optdomain$ we have
  \begin{equation*}
 \sum_{\iter=1}^\totaliter \<\subgrad[\iter], \optvar[\iter] -
 \optvar^*\> \le \frac{1}{2 \stepsize[\totaliter]} \radius^2 +
 \sum_{\iter=1}^\totaliter \frac{\stepsize[\iter]}{2}
 \dnorm{\subgrad[\iter]}^2.
  \end{equation*}
\end{lemma}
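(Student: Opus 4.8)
The plan is to prove a per-iteration inequality bounding $\<\subgrad[\iter], \optvar[\iter] - \optvar^*\>$ and then sum it, exactly along the lines of the classical mirror-descent analysis. First I would write down the first-order optimality condition for the proximal subproblem~\eqref{eqn:mirror-descent-update}: since $\optvar[\iter+1]$ minimizes the convex map $\optvar \mapsto \<\subgrad[\iter], \optvar\> + \stepsize[\iter]^{-1}\divergence(\optvar, \optvar[\iter])$ over the convex set $\optdomain$, and this objective has gradient $\subgrad[\iter] + \stepsize[\iter]^{-1}(\nabla\prox(\optvar[\iter+1]) - \nabla\prox(\optvar[\iter]))$ at $\optvar[\iter+1]$, we get
\[
  \big\<\stepsize[\iter]\subgrad[\iter] + \nabla\prox(\optvar[\iter+1]) - \nabla\prox(\optvar[\iter]),\ \optvar^* - \optvar[\iter+1]\big\> \ge 0
\]
for every $\optvar^* \in \optdomain$. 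Rearranging and applying the three-point identity for Bregman divergences, $\<\nabla\prox(\optvar[\iter+1]) - \nabla\prox(\optvar[\iter]),\, \optvar^* - \optvar[\iter+1]\> = \divergence(\optvar^*, \optvar[\iter]) - \divergence(\optvar^*, \optvar[\iter+1]) - \divergence(\optvar[\iter+1], \optvar[\iter])$, yields
\[
  \stepsize[\iter]\<\subgrad[\iter], \optvar[\iter+1] - \optvar^*\> \le \divergence(\optvar^*, \optvar[\iter]) - \divergence(\optvar^*, \optvar[\iter+1]) - \divergence(\optvar[\iter+1], \optvar[\iter]).
\]

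Next I would decompose $\<\subgrad[\iter], \optvar[\iter] - \optvar^*\> = \<\subgrad[\iter], \optvar[\iter] - \optvar[\iter+1]\> + \<\subgrad[\iter], \optvar[\iter+1] - \optvar^*\>$ and bound the first term by Fenchel--Young against the norm pair $(\norm{\cdot},\dnorm{\cdot})$, namely $\<\subgrad[\iter], \optvar[\iter] - \optvar[\iter+1]\> \le \frac{\stepsize[\iter]}{2}\dnorm{\subgrad[\iter]}^2 + \frac{1}{2\stepsize[\iter]}\norm{\optvar[\iter] - \optvar[\iter+1]}^2$. The crux of the per-step argument is that the leftover $\frac{1}{2}\norm{\optvar[\iter] - \optvar[\iter+1]}^2$ is absorbed by $-\divergence(\optvar[\iter+1], \optvar[\iter])$, which is $\le -\frac{1}{2}\norm{\optvar[\iter+1] - \optvar[\iter]}^2$ by the $1$-strong convexity of $\prox$ (Assumption~\ref{assumption:prox}); combining with the displayed bound above and dividing by $\stepsize[\iter]$ gives the clean inequality
\[
  \<\subgrad[\iter], \optvar[\iter] - \optvar^*\> \le \frac{1}{\stepsize[\iter]}\big(\divergence(\optvar^*, \optvar[\iter]) - \divergence(\optvar^*, \optvar[\iter+1])\big) + \frac{\stepsize[\iter]}{2}\dnorm{\subgrad[\iter]}^2.
\]

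Finally I would sum over $\iter = 1, \dots, \totaliter$ and control the divergence sum, which is telescoping-type but with non-constant coefficients $\stepsize[\iter]^{-1}$. Writing $D_\iter \defeq \divergence(\optvar^*, \optvar[\iter])$ and using Abel summation, $\sum_{\iter=1}^\totaliter \stepsize[\iter]^{-1}(D_\iter - D_{\iter+1}) = \stepsize[1]^{-1}D_1 + \sum_{\iter=2}^\totaliter (\stepsize[\iter]^{-1} - \stepsize[\iter-1]^{-1})D_\iter - \stepsize[\totaliter]^{-1}D_{\totaliter+1}$. Since $\{\stepsize[\iter]\}$ is non-increasing the coefficients $\stepsize[\iter]^{-1} - \stepsize[\iter-1]^{-1}$ are nonnegative, Assumption~\ref{assumption:prox} gives $0 \le D_\iter \le \half\radius^2$ for all $\iter$, and $D_{\totaliter+1} \ge 0$; bounding each $D_\iter$ by $\half\radius^2$ collapses the sum to $\frac{\radius^2}{2\stepsize[\totaliter]}$, which is precisely the claimed bound. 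I do not expect a genuine obstacle, as this is the standard mirror-descent estimate; the only points needing care are the exact cancellation of $-\divergence(\optvar[\iter+1],\optvar[\iter])$ against the Fenchel--Young remainder (so that no term depending on the unknown iterate displacement survives) and getting the direction of the step-size monotonicity right in the Abel summation.
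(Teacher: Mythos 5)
Your proof is correct, and it follows the standard mirror-descent argument (optimality condition for the proximal step, three-point Bregman identity, Fenchel--Young plus $1$-strong convexity to absorb the $\divergence(\optvar[\iter+1],\optvar[\iter])$ term, then Abel summation using monotonicity of the step sizes and $\divergence(\optvar^*,\optvar[\iter]) \le \half\radius^2$). The paper does not reproduce a proof of this lemma---it cites it as standard from \citet[Section~2.3]{NemirovskiJuLaSh09} and \citet[Eq.~(4.21)]{BeckTe03}---and your argument is exactly the one those references use, so there is nothing to reconcile.
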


Defining the error vector $\error[\iter] \defeq \nabla
f(\optvar[\iter]) - \subgrad[\iter]$, Lemma~\ref{lemma:linear-regret}
implies that
\begin{align}
  \sum_{\iter = 1}^\totaliter \big(f(\optvar[\iter]) -
  f(\optvar^*)\big) \le \sum_{\iter=1}^\totaliter \<\nabla
  f(\optvar[\iter]), \optvar[\iter] - \optvar^*\> & =
  \sum_{\iter=1}^\totaliter \<\subgrad[\iter], \optvar[\iter] -
  \optvar^*\> + \sum_{\iter=1}^\totaliter \<\error[\iter],
  \optvar[\iter] - \optvar^*\>.  \nonumber \\ & \le \frac{1}{2
    \stepsize[\totaliter]} \radius^2 + \sum_{\iter=1}^\totaliter
  \frac{\stepsize[\iter]}{2} \dnorm{\subgrad[\iter]}^2 +
  \sum_{\iter=1}^\totaliter \<\error[\iter], \optvar[\iter] -
  \optvar^*\>.
  \label{eqn:starting-regret-bound}
\end{align}
For each iteration $\iter = 2, 3, \ldots$, let $\mc{F}_{\iter-1}$
denote the $\sigma$-field of $\statrv[1], \ldots, \statrv[\iter-1]$
and $\perturbrv[1], \ldots, \perturbrv[\iter - 1]$.  Then
Lemma~\ref{lemma:difference-to-gradient} implies $\E[\error[\iter]
  \mid \mc{F}_{\iter-1}] = \smoothparam_\iter \lipgrad(\statprob)
\direrrvec_\iter$, where $\direrrvec_\iter \equiv
\direrrvec(\optvar[\iter], \smoothparam_\iter)$ satisfies
$\dnorm{\direrrvec_\iter} \le \half \bigznorm$. Since $\optvar[\iter]
\in \mc{F}_{\iter-1}$, we can first take an expectation conditioned on
$\mc{F}_{\iter-1}$ to obtain
\begin{align*}
  \sum_{\iter=1}^\totaliter \E[\<\error[\iter], \optvar[\iter] -
    \optvar^*\>] \le \lipgrad(\statprob) \sum_{\iter=1}^\totaliter
  \smoothparam_\iter \E[\dnorm{\direrrvec_\iter} \norm{\optvar[\iter]
      - \optvar^*}] \le \half \bigznorm \radius \lipgrad(\statprob)
  \sum_{\iter=1}^\totaliter \smoothparam_\iter,
\end{align*}
where in the last step above we have used the relation
$\norms{\optvar[\iter] - \optvar^*} \le \sqrt{2 \divergence(\optvar^*,
  \optvar)} \le \radius$.  Statement~\eqref{eqn:first-variance} of
Lemma~\ref{lemma:difference-to-gradient} coupled with the assumption
that $\E[\dnorms{\gradfunc(\optvar[\iter]; \statrv)}^2 \mid
  \mc{F}_{\iter-1}] \le \lipobj^2$ yields
\begin{align*}
  \E\left[\dnorm{\subgrad[\iter]}^2\right] =
  \E\left[\E\left[\dnorm{\subgrad[\iter]}^2 \mid \mc{F}_{\iter-1}
      \right]\right] \le 2 \sampledim(d) \lipobj^2 +
  \half\smoothparam_\iter^2 \lipgrad(\statprob)^2 \bigznorm^2.
\end{align*}
Applying the two estimates above to our initial
bound~\eqref{eqn:starting-regret-bound} yields that
$\sum_{\iter=1}^\totaliter \E\big[f(\optvar[\iter]) -
  f(\optvar^*)\big]$ is upper bounded by
\begin{align}
\frac{1}{2\stepsize[\totaliter]}\radius^2 + \sampledim(d) \lipobj^2
\sum_{\iter=1}^\totaliter \stepsize[\iter] + \frac{1}{4}
\lipgrad(\statprob)^2 \bigznorm^2 \sum_{\iter=1}^\totaliter
\smoothparam_\iter^2 \stepsize[\iter] + \half \bigznorm \radius
\lipgrad(\statprob) \sum_{\iter=1}^\totaliter \smoothparam_\iter.
  \label{eqn:intermediate-regret-bound}
\end{align}

Now we use our choices of the sample size $\stepsize[\iter]$ and
$\smoothparam_\iter$ to complete the proof. For the former, we have
$\stepsize[\iter] = \stepsize \radius / (2 \lipobj
\sqrt{\sampledim(d)} \sqrt{\iter})$.  Since $\sum_{\iter=1}^\totaliter
\iter^{-\half} < \int_0^\totaliter t^{-\half} dt = 2
\sqrt{\totaliter}$, we have
\begin{equation*}
  \frac{1}{2 \stepsize[\totaliter]} \radius^2 + \sampledim(d)
  \lipobj^2 \sum_{\iter=1}^\totaliter \stepsize[\iter] \le
  \frac{\radius \lipobj \sqrt{\sampledim(d)}}{\stepsize}
  \sqrt{\totaliter} + \stepsize \radius \lipobj \sqrt{\sampledim(d)}
  \sqrt{\totaliter} \le 2 \radius \lipobj \sqrt{\sampledim(d)}
  \sqrt{\totaliter} \max\{\stepsize, \stepsize^{-1}\}.
\end{equation*}
For the second summation in the
quantity~\eqref{eqn:intermediate-regret-bound}, we have the bound
\begin{equation*}
  \stepsize \smoothparam^2 \left(\frac{\lipobj^2
    \sampledim(d)}{\lipgrad(\statprob)^2 \bigznorm^2} \right)
  \frac{\radius \lipgrad(\statprob)^2 \bigznorm^2}{ 4\lipobj
    \sqrt{\sampledim(d)}} \sum_{\iter=1}^\totaliter
  \frac{1}{\iter^{5/2}} \le \stepsize \smoothparam^2 \radius \lipobj
  \sqrt{\sampledim(d)}
\end{equation*}
since $\sum_{\iter=1}^\totaliter \iter^{-5/2} \le 4$.  The final term
in the inequality~\eqref{eqn:intermediate-regret-bound} is similarly
bounded by
\begin{equation*}
  \smoothparam \left(\frac{\lipobj \sqrt{\sampledim(d)}}{
    \lipgrad(\statprob) \bigznorm}\right) \frac{\radius
    \lipgrad(\statprob) \bigznorm}{2} (\log \totaliter + 1) =
  \smoothparam \frac{\radius \lipobj \sqrt{\sampledim(d)}}{2} (\log
  \totaliter + 1) \le \smoothparam \radius \lipobj
  \sqrt{\sampledim(d)} \log(2\totaliter).
\end{equation*}
Combining the preceding inequalities with Jensen's inequality yields
the claim~\eqref{eqn:main-convergence}.

%------------------------------

\subsection{Proof of Lemma~\ref{lemma:difference-to-gradient}}
\label{appendix:proof-difference-to-gradient}

Let $h$ be an arbitrary convex function with $\lipgrad_h$-Lipschitz
continuous gradient with respect to the norm $\norm{\cdot}$.  Using
the tangent plane lower bound for a convex function and the
$\lipgrad_h$-Lipschitz continuity of the gradient, for any
$\smoothparam > 0$ we have
\begin{align*}
h'(\optvar, z) = \frac{\<\nabla h(\optvar), \smoothparam
  z\>}{\smoothparam} \le \frac{h(\optvar + \smoothparam z) -
  h(\optvar)}{\smoothparam} \le \frac{\<\nabla h(\optvar),
  \smoothparam z\> + (\lipgrad_h / 2) \norm{\smoothparam
    z}^2}{\smoothparam} = h'(\optvar, z) + \frac{\lipgrad_h
  \smoothparam}{2} \norm{z}^2.
\end{align*}
Consequently, for any point $\optvar \in \relint \dom h$ and for any
$z \in \R^d$, we have
\begin{equation}
\label{eqn:difference-to-directional}
\frac{h(\optvar + \smoothparam z) - h(\optvar)}{\smoothparam} z =
h'(\optvar, z) z + \frac{\lipgrad_h \smoothparam}{2} \norm{z}^2
\direrr(\smoothparam, \optvar, z) z,
\end{equation}
where $\direrr$ is some function with range contained in $[0, 1]$.
Since $\E[\perturbrv\perturbrv^\top] = I_{d \times d}$ by assumption,
equality~\eqref{eqn:difference-to-directional} implies
\begin{align}
\E\left[\frac{h(\optvar + \smoothparam \perturbrv) -
    h(\optvar)}{\smoothparam} \perturbrv \right] & =
\E\left[h'(\optvar, \perturbrv) \perturbrv + \frac{\lipgrad_h
    \smoothparam}{2} \norm{\perturbrv}^2 \direrr(\smoothparam,
  \optvar, \perturbrv) \perturbrv\right] = \nabla h(\optvar) +
\smoothparam \lipgrad_h \direrrvec(\optvar, \smoothparam),
  \label{eqn:expected-difference-to-gradient}
\end{align}
where $\direrrvec(\optvar, \smoothparam) \in \R^d$ is an error vector
with $\dnorm{\direrrvec(\optvar, \smoothparam)} \le \half
\E[\norm{\perturbrv}^2 \dnorm{\perturbrv}]$.

We now turn to proving the statements of the lemma.  Recalling the
definition~\eqref{eqn:smooth-gradient-estimate} of the gradient
estimator, we see that for $\statprob$-almost every $\statval
\in \statsamplespace$,
expression~\eqref{eqn:expected-difference-to-gradient} implies that
\begin{equation*}
  \E[\gradestsmooth(\optvar; \smoothparam, \perturbrv, \statval)] =
  \nabla F(\optvar; \statval) + \smoothparam \lipgrad(\statval)
  \direrrvec(\optvar, \smoothparam)
%%   \E[\subgrad[\iter] \mid \statrv[\iter], \mc{F}_{\iter-1}] =
%%   \nabla F(\optvar[\iter]; \statrv[\iter]) + \smoothparam_\iter
%%   \lipgrad(\statrv[\iter]) \direrrvec_\iter,
\end{equation*}
for some vector $\direrrvec = \direrrvec(\optvar, \smoothparam)$ with
$2 \dnorm{\direrrvec} \le \E[\norm{\perturbrv}^2 \dnorm{\perturbrv}]$.
We have $\E[\nabla F(\optvar; \statrv)] = \nabla f(\optvar[\iter])$,
and independence implies that
\begin{equation*}
  \E[\lipgrad(\statrv) \dnorm{\direrrvec(\optvar, \smoothparam)}] \le
  \sqrt{\E[\lipgrad(\statrv)^2]}\sqrt{ \E[\dnorm{\direrrvec}^2]} \le
  \half \lipgrad(\statprob) \E[\norm{\perturbrv}^2
    \dnorm{\perturbrv}],
\end{equation*}
from which the bound~\eqref{eqn:first-mean} follows.

For the second statement~\eqref{eqn:first-variance} of the lemma,
apply equality~\eqref{eqn:difference-to-directional} to $F(\cdot;
\statrv)$, obtaining
\begin{equation*}
  \gradestsmooth(\optvar; \smoothparam, \perturbrv, \statrv)
  = \<\gradfunc(\optvar, \statrv), \perturbrv\> \perturbrv
  + \frac{\lipgrad(\statrv) \smoothparam}{2}
  \norm{\perturbrv}^2 \direrr \perturbrv
%%   \subgrad[\iter] = \<\gradfunc(\optvar[\iter], \statrv[\iter]),
%%     \perturbrv[\iter]\> \perturbrv[\iter] +
%%     \frac{\lipgrad(\statrv[\iter]) \smoothparam_\iter}{2}
%%     \norm{\perturbrv[\iter]}^2 \direrr \perturbrv[\iter]
\end{equation*}
for some function $\direrr \equiv \direrr(\smoothparam, \optvar,
\perturbrv, \statrv) \in [0, 1]$. The relation $(a + b)^2 \le 2a^2 +
2b^2$ then gives
\begin{align*}
\E[ \dnorm{\gradestsmooth(\optvar; \smoothparam, \perturbrv,
    \statrv)}^2] & \le \E\left[\left(\dnorm{\<\gradfunc(\optvar,
    \statrv), \perturbrv\> \perturbrv} + \half
  \dnorm{\lipgrad(\statrv) \smoothparam \norm{\perturbrv}^2 \direrr
    \perturbrv}\right)^2\right] \\ &\le 2
\E\left[\dnorm{\<\gradfunc(\optvar, \statrv), \perturbrv\>
    \perturbrv}^2\right] + \frac{\smoothparam^2}{2} \E\left[
  \lipgrad(\statrv)^2 \norm{\perturbrv}^4 \dnorm{\perturbrv}^2
  \right].
%%   \E[\dnorm{\subgrad[\iter]}^2 \mid \mc{F}_{\iter-1}]
%%   &\le
%%   \E\left[\left(\dnorm{\<\gradfunc(\optvar[\iter], \statrv[\iter]),
%%     \perturbrv[\iter]\> \perturbrv[\iter]}
%%     + \dnorm{\half \lipgrad(\statrv[\iter]) \smoothparam_\iter
%%       \norm{\perturbrv[\iter]}^2 \direrr \perturbrv[\iter]}\right)^2
%%     \:\Big|\: \mc{F}_{\iter-1} \right] \\
%%   &\le
%%   2 \E\left[\dnorm{\<\gradfunc(\optvar[\iter], \statrv[\iter]),
%%     \perturbrv[\iter]\> \perturbrv[\iter]}^2 \mid \mc{F}_{\iter-1} \right]
%%   + \frac{\smoothparam_\iter^2}{2} \E\left[
%%       \lipgrad(\statrv[\iter])^2
%%       \norm{\perturbrv[\iter]}^4 \dnorm{\perturbrv[\iter]}^2
%%       \right].
\end{align*}
Finally, Assumption~\ref{assumption:smoothing-dist} coupled with the
independence of $\statrv$ and $\perturbrv$ gives the
bound~\eqref{eqn:first-variance}.

%% $\statrv[\iter]$ and $\perturbrv[\iter]$ are independent, we obtain the desired
%% bound, 
%% \begin{align*}
%%   \E[\dnorm{\subgrad[\iter]}^2 \mid \mc{F}_{\iter-1}]
%%   &\le
%%     2 \sampledim(\dim) \E\left[\dnorm{\gradfunc(\optvar[\iter]; \statrv)}^2
%%     \mid \mc{F}_{\iter-1} \right]
%%     + \half \smoothparam_\iter^2 \lipgrad(\statprob)^2 \bigznorm^2.
%% \end{align*}

\subsection{Proof of Corollary~\ref{corollary:ltwo-multiple}}
\label{appendix:proof-ltwo-multiple}

We show that averaging multiple directional estimates gives a gradient
estimator whose expected squared norm is smaller by a factor of $\numobs$
than that attained using a single vector $\perturbrv$. Fixing $\statval$,
let $g = \nabla F(\optvar; \statval) + \smoothparam \lipgrad(\statval)
\direrrvec(\optvar, \smoothparam, \statval)$ denote the expectation of
$\gradestsmooth(\optvar; \smoothparam, \perturbrv, \statval)$ taken over
$\perturbrv$ uniform on $\sqrt{d} \B^d$, where $2 \ltwo{\direrrvec} \le
d^{3/2}$, by equation~\eqref{eqn:expected-difference-to-gradient}.  In this
case, for $\perturbrv[i]$ drawn
i.i.d.\ $\smoothingdist$, we obtain
\begin{align*}
\E \bigg[\ltwobigg{\frac{1}{\numobs} \sum_{i=1}^\numobs
    \gradestsmooth(\optvar; \smoothparam, \perturbrv[i], \statval)}^2
  \bigg] & = \ltwo{g}^2 + \E\bigg[\ltwobigg{\frac{1}{\numobs}
    \sum_{i=1}^\numobs \gradestsmooth(\optvar; \smoothparam,
    \perturbrv[i], \statval) - g}^2 \bigg] \\ & = \ltwo{g}^2 +
\frac{1}{\numobs} \E[\ltwo{\gradestsmooth(\optvar; \smoothparam,
    \perturbrv[1], \statval) - g}^2].
\end{align*}
Now, taking an expectation over $\statrv$, we have
\begin{align*}
  \lefteqn{\E\bigg[\ltwobigg{\frac{1}{\numobs} \sum_{i=1}^\numobs
        \gradestsmooth(\optvar; \smoothparam, \perturbrv[i],
        \statrv)}^2 \bigg] \le \E[\ltwo{\nabla F(\optvar; \statrv) +
        \smoothparam \lipgrad(\statrv) \direrrvec(\optvar,
        \smoothparam, \statrv)}^2] + \frac{1}{\numobs}
    \E[\ltwo{\gradestsmooth(\optvar; \smoothparam, \perturbrv[1],
        \statrv)}^2]} \\ & \qquad\qquad\qquad ~ \stackrel{(i)}{\le} 2
  \E[\ltwo{\nabla F(\optvar; \statrv)}^2] + \frac{1}{2} \smoothparam^2
  d^3 \E[\lipgrad(\statrv)^2] + \frac{1}{\numobs} \left(2 d
  \E[\ltwo{\nabla F(\optvar; \statrv)}^2] + \half \smoothparam^2
  \lipgrad(\statprob)^2 d^3\right) \\ & \qquad\qquad\qquad ~ = 2
  \left(1 + \frac{d}{\numobs}\right) \E[\ltwo{\nabla F(\optvar;
      \statrv)}^2] + \half \left(1 + \frac{1}{\numobs}\right)
  \smoothparam^2 \lipgrad(\statprob)^2 d^3,
\end{align*}
where inequality~(i) follows from Lemma~\ref{lemma:difference-to-gradient}
and Jensen's inequality.  By comparison of this inequality with
Lemma~\ref{lemma:difference-to-gradient}'s application in
Theorem~\ref{theorem:main-convergence} and
Corollary~\ref{corollary:ltwo}---the non-$\smoothparam$-dependent
terms scale as $(1 + d/\numobs) \E[\ltwo{\nabla F(\optvar;
    \statrv)}^2]$---the stepsizes specified in the
corollary give the desired guarantee.

%-------------------------

\subsection{Proof of Theorem~\ref{theorem:main-convergence-general}}
\label{appendix:proof-main-convergence-general}

The proof of Theorem~\ref{theorem:main-convergence-general} is similar
to that of Theorem~\ref{theorem:main-convergence}. To simplify our
proof, we first state a lemma bounding the moments of vectors that
satisfy Assumption~\ref{assumption:smoothing-dist-general}.
\begin{lemma}
\label{lemma:vector-moments}
Let the random vector $\perturbrv$ be distributed as $\normal(0, I_{d
  \times d})$, uniformly on the $\ell_2$-ball of radius $\sqrt{d +
  2}$, or uniformly on the $\ell_2$-sphere of radius $\sqrt{d}$. For
any $k \in \N$, there is a constant $c_k$ (dependent only on $k$) such
that
\begin{equation*}
\E\left[\ltwo{\perturbrv}^k\right] \le c_k d^{\frac{k}{2}}.
\end{equation*}
In all cases we have $\E[\perturbrv \perturbrv^\top] = I_{d \times
  d}$, and $c_k \le 3$ for $k = 4$ and $c_k \le \sqrt{3}$ for $k = 3$.
\end{lemma}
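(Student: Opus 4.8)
The plan is to handle the three distributions simultaneously, using that each is rotationally symmetric and has an explicitly computable radial law, and to reduce everything to the two low-order moments $\E[\ltwo{\perturbrv}^2]$ and $\E[\ltwo{\perturbrv}^4]$.

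For the covariance identity: since each of the three laws is invariant under orthogonal transformations, the matrix $\E[\perturbrv\perturbrv^\top]$ commutes with every orthogonal matrix and hence equals $\lambda I_{d\times d}$ for a scalar $\lambda$; taking traces gives $\lambda=\frac1d\E[\ltwo{\perturbrv}^2]$. So it is enough to verify $\E[\ltwo{\perturbrv}^2]=d$ in each case. This is immediate for the sphere of radius $\sqrt d$; for $\perturbrv\sim\normal(0,I_{d\times d})$ the quantity $\ltwo{\perturbrv}^2$ is a chi-squared variate with $d$ degrees of freedom, of mean $d$; and for $\perturbrv$ uniform on $\B^d(0,R)$ the law of $\ltwo{\perturbrv}$ has density $d\,r^{d-1}/R^d$ on $[0,R]$, so $\E[\ltwo{\perturbrv}^k]=dR^k/(d+k)$, which at $k=2$ and $R=\sqrt{d+2}$ equals $d$. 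This simultaneously establishes $\E[\perturbrv\perturbrv^\top]=I_{d\times d}$.

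For the moment bounds: on the sphere $\ltwo{\perturbrv}=\sqrt d$ is deterministic, so $\E[\ltwo{\perturbrv}^k]=d^{k/2}$ with $c_k=1$. On the ball, $\E[\ltwo{\perturbrv}^k]=d(d+2)^{k/2}/(d+k)\le(d+2)^{k/2}\le 3^{k/2}d^{k/2}$ for $d\ge1$ (using $d+2\le 3d$), and the sharper estimate $(d+2)^2\le 3d(d+4)$ for $d\ge1$ gives $\E[\ltwo{\perturbrv}^4]\le 3d^2$. For the Gaussian, $\E[\ltwo{\perturbrv}^{2m}]=\E[(\chi^2_d)^m]=\prod_{j=0}^{m-1}(d+2j)$, so $\E[\ltwo{\perturbrv}^4]=d(d+2)\le 3d^2$ for $d\ge1$, and in general $\prod_{j=0}^{m-1}(d+2j)\le(d+2m)^m\le(2m+1)^m d^m$, a constant depending only on $m$ times $d^m$. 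Thus every even moment admits the stated bound, with $c_4\le 3$.

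For odd $k$, I would interpolate: writing $\ltwo{\perturbrv}^k=\ltwo{\perturbrv}^{(k-1)/2}\cdot\ltwo{\perturbrv}^{(k+1)/2}$ and applying the Cauchy--Schwarz inequality gives $\E[\ltwo{\perturbrv}^k]\le(\E[\ltwo{\perturbrv}^{k-1}]\E[\ltwo{\perturbrv}^{k+1}])^{1/2}$, bounding each odd moment by the geometric mean of its two even neighbours. In particular $\E[\ltwo{\perturbrv}^3]\le(\E[\ltwo{\perturbrv}^2]\E[\ltwo{\perturbrv}^4])^{1/2}\le(d\cdot 3d^2)^{1/2}=\sqrt3\,d^{3/2}$ in all three cases, so $c_3\le\sqrt3$. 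The lemma is elementary and I anticipate no real obstacle; the only care needed is ensuring the sharp constants $3$ and $\sqrt3$ hold uniformly in $d$ (down to $d=1$), which reduces to the inequalities $d+2\le 3d$ and $(d+2)^2\le 3d(d+4)$ together with the Cauchy--Schwarz step that makes $c_3$ follow at once from $\E[\ltwo{\perturbrv}^2]=d$ and $\E[\ltwo{\perturbrv}^4]\le 3d^2$.
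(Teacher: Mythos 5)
Your proof is correct and follows essentially the same route as the paper: explicitly computing the radial distributions and their moments for each of the three laws. The only small variation is that you derive $c_3 \le \sqrt{3}$ uniformly from the second and fourth moments via Cauchy--Schwarz, whereas the paper relies on the exact Gamma-function formula for the Gaussian and asserts the small-$k$ constants by direct computation.
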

See Appendix~\ref{appendix:proof-vector-moments} for the proof.  We
now turn to the proof proper. From Lemmas~E.2 and~E.3 of the
paper~\cite{DuchiBaWa12}, the function~$f_{\smoothparam}$ defined
in~\eqref{eqn:smoothed-function} satisfies $f(\optvar) \le
f_{\smoothparam}(\optvar) \le f(\optvar) + \smoothparam \lipobj
\sqrt{\dim+2}$ for $\optvar \in \optdomain$. Defining the error vector
$\error[\iter] \defeq \nabla f_{\smoothparam_{1,\iter}}
(\optvar[\iter]) - \subgrad[\iter]$ and noting that $\sqrt{\dim+2} \le
\sqrt{3\dim}$, we thus have
\begin{align}
  \sum_{\iter = 1}^\totaliter \big(f(\optvar[\iter]) -
  f(\optvar^*)\big) &\le \sum_{\iter = 1}^\totaliter
  \big(f_{\smoothparam_{1,\iter}}(\optvar[\iter]) -
  f_{\smoothparam_{1,\iter}}(\optvar^*)\big) + \sqrt{3} \lipobj
  \sqrt{\dim} \sum_{\iter=1}^\totaliter \smoothparam_{1,\iter}\notag
  \\ & \le \sum_{\iter=1}^\totaliter \<\nabla
  f_{\smoothparam_{1,\iter}}(\optvar[\iter]), \optvar[\iter] -
  \optvar^*\> + \sqrt{3} \lipobj \sqrt{\dim} \sum_{\iter=1}^\totaliter
  \smoothparam_{1,\iter}\notag \\ & = \sum_{\iter=1}^\totaliter
  \<\subgrad[\iter], \optvar[\iter] - \optvar^*\> +
  \sum_{\iter=1}^\totaliter \<\error[\iter], \optvar[\iter] -
  \optvar^*\> + \sqrt{3} \lipobj \sqrt{\dim} \sum_{\iter=1}^\totaliter
  \smoothparam_{1,\iter}, \notag
\end{align}
where we have used the convexity of $f_\smoothparam$ and the
definition of $\error[\iter]$. Applying
Lemma~\ref{lemma:linear-regret} to the summed $\<\subgrad[\iter],
\optvar[\iter] - \optvar^*\>$ terms as in the proof of
Theorem~\ref{theorem:main-convergence}, we obtain
\begin{align}
  \sum_{\iter = 1}^\totaliter \big(f(\optvar[\iter]) -
  f(\optvar^*)\big) &\le \frac{\radius^2}{2 \stepsize[\totaliter]} +
  \half \sum_{\iter=1}^\totaliter \stepsize[\iter]
  \ltwo{\subgrad[\iter]}^2 + \sum_{\iter=1}^\totaliter
  \<\error[\iter], \optvar[\iter] - \optvar^*\> + \sqrt{3} \lipobj
  \sqrt{\dim} \sum_{\iter=1}^\totaliter \smoothparam_{1,\iter}.
  \label{eqn:starting-regret-bound-general}
\end{align}
The proof from this point is similar to the proof of
Theorem~\ref{theorem:main-convergence}
(cf.\ inequality~\eqref{eqn:starting-regret-bound}).  Specifically, we
bound the squared gradient $\ltwos{\subgrad[\iter]}^2$ terms, the
error $\<\error[\iter], \optvar[\iter] - \optvar^*\>$ terms, and then
control the summed $\smoothparam_\iter$ terms.  For the remainder of
the proof, we let $\mc{F}_{\iter-1}$ denote the $\sigma$-field
generated by the random variables $\statrv[1], \ldots,
\statrv[\iter-1]$, $\perturbrv[1]_1, \ldots, \perturbrv[\iter - 1]_1$,
and $\perturbrv[1]_2, \ldots, \perturbrv[\iter - 1]_2$.

%% Comparing to inequality~\eqref{eqn:starting-regret-bound} from the
%% proof of Theorem~\ref{theorem:main-convergence}, we see that the
%% remainder of the proof has essentially the same structure: we use
%% Lemma~\ref{lemma:difference-to-gradient-general} to bound the
%% expected sizes of the gradients $\ltwo{\subgrad[\iter]}^2$ and the
%% bias in the errors $\error[\iter]$, and then additionally calculate
%% the sum $\sum_\iter \smoothparam_{1,\iter}$.

\paragraph{Bounding $\<\error[\iter], \optvar[\iter] - \optvar^*\>$:}

Our first step is note that
Lemma~\ref{lemma:difference-to-gradient-general} implies
$\E[\error[\iter] \mid \mc{F}_{\iter-1}] =
\frac{\smoothparam_{2,\iter}}{\smoothparam_{1,\iter}} \lipobj
\direrrvec_\iter$, where the vector $\direrrvec_\iter \equiv
\direrrvec(\optvar[\iter], \smoothparam_{1,\iter},
\smoothparam_{2,\iter})$ satisfies $\ltwo{\direrrvec_\iter} \le \half
\E[\ltwo{\perturbrv_2}^3]$. As in the proof of
Theorem~\ref{theorem:main-convergence}, this gives
\begin{align*}
  \sum_{\iter=1}^\totaliter \E[\<\error[\iter], \optvar[\iter] - \optvar^*\>]
  \le \lipobj \sum_{\iter=1}^\totaliter
  \frac{\smoothparam_{2,\iter}}{\smoothparam_{1,\iter}}\,
  \E[\ltwo{\direrrvec_\iter} \ltwo{\optvar[\iter] - \optvar^*}]
  \le \half \, \E[\ltwo{\perturbrv_2}^3] \, \radius \lipobj
  \sum_{\iter=1}^\totaliter
  \frac{\smoothparam_{2,\iter}}{\smoothparam_{1,\iter}}.
\end{align*}
When Assumption~\ref{assumption:smoothing-dist-general} holds,
Lemma~\ref{lemma:vector-moments} implies the expectation bound
$\E[\ltwo{\perturbrv_2}^3] \le \sqrt{3} \dim^{3/2}$.  Thus
\begin{equation*}
  \sum_{\iter = 1}^\totaliter \E[\<\error[\iter],
    \optvar[\iter] - \optvar^*\>]
  \le \frac{\sqrt{3} \dim \sqrt{\dim}}{2} \radius \lipobj
  \sum_{\iter = 1}^\totaliter \frac{\smoothparam_{2,\iter}}{
    \smoothparam_{1,\iter}}.
\end{equation*}

\paragraph{Bounding $\ltwos{\subgrad[\iter]}^2$:}
Turning to the squared gradient terms from the
bound~\eqref{eqn:starting-regret-bound-general},
Lemma~\ref{lemma:difference-to-gradient-general} gives
\begin{align*}
  \E[\ltwo{\subgrad[\iter]}^2]
  = \E[\E[\ltwo{\subgrad[\iter]}^2 \mid \mc{F}_{\iter-1}]]
  \le c \, \lipobj^2 \dim \left(
    \sqrt{\frac{\smoothparam_{2,\iter}}{\smoothparam_{1,\iter}}}\,\dim
    + 1 + \log \dim\right)
  \le c' \, \lipobj^2 \dim \left(
    \sqrt{\frac{\smoothparam_{2,\iter}}{\smoothparam_{1,\iter}}}\,\dim
    + \log(2\dim)\right),
\end{align*}
where $c, c' > 0$ are numerical constants independent of
$\{\smoothparam_{1,\iter}\}$, $\{\smoothparam_{2,\iter}\}$.

\paragraph{Summing out the smoothing penalties:}
Applying the preceding estimates to our earlier
bound~\eqref{eqn:starting-regret-bound-general}, we get that
for a numerical constant $c$,
\begin{equation}
  \begin{split}
    \label{eqn:intermediate-regret-bound-general}
    \sum_{\iter=1}^\totaliter \E\big[f(\optvar[\iter])
      - f(\optvar^*)\big]
    & \le \frac{\radius^2}{2 \stepsize[\totaliter]}
    + c \lipobj^2\dim \log(2\dim)
    \sum_{\iter=1}^\totaliter \stepsize[\iter] \\
    & \qquad ~ + c \lipobj^2\dim^2
    \sum_{\iter=1}^\totaliter
    \sqrt{\frac{\smoothparam_{2,\iter}}{\smoothparam_{1,\iter}}}
    \,\stepsize[\iter]
    + \frac{\sqrt{3}}{2} \radius\lipobj\dim\sqrt{\dim}
    \sum_{\iter=1}^\totaliter
    \frac{\smoothparam_{2,\iter}}{\smoothparam_{1,\iter}}
    + \sqrt{3} \lipobj \sqrt{\dim}
    \sum_{\iter=1}^\totaliter \smoothparam_{1,\iter}.
  \end{split}  
\end{equation}
We bound the right hand side above using our choices of $\stepsize[\iter]$,
$\smoothparam_{1,\iter}$, and $\smoothparam_{2,\iter}$. We also
use the relations $\sum_{\iter=1}^\totaliter \iter^{-\half} \le
2 \sqrt{\totaliter}$ and $\sum_{\iter=1}^\totaliter \iter^{-1} \le
1 + \log \totaliter \le 2 \log \totaliter$ for $\totaliter \ge 3$.
With the setting $\stepsize[\iter] = \stepsize \radius/(\lipobj
\sqrt{\dim \log(2\dim)} \sqrt{\iter})$, the first two terms
in~\eqref{eqn:intermediate-regret-bound-general} become
\begin{align*}
  \frac{\radius^2}{2 \stepsize[\totaliter]}
  + c\lipobj^2\dim\log(2\dim)
  \sum_{\iter=1}^\totaliter \stepsize[\iter]
  & \le \frac{\radius\lipobj\sqrt{\dim \log(2\dim)}}{2\stepsize}
  \sqrt{\totaliter}
  + 2 c \stepsize \radius \lipobj \sqrt{\dim \log(2\dim)}
  \sqrt{\totaliter} \\
  & \le c' \max\{\stepsize, \stepsize^{-1}\}
  \radius \lipobj \sqrt{\dim \log(2\dim)} \sqrt{\totaliter}
\end{align*}
for a universal constant $c'$. Since we have chosen
$\smoothparam_{2,\iter}/\smoothparam_{1,\iter} = 1/(\dim^2\iter)$, we
may bound the third
term in expression~\eqref{eqn:intermediate-regret-bound-general} by
\begin{align*}
  c \lipobj^2\dim^2 \sum_{\iter=1}^\totaliter
  \sqrt{\frac{\smoothparam_{2,\iter}}{\smoothparam_{1,\iter}}}\,
  \stepsize[\iter]
  = c \lipobj^2\dim^2
  \left(\frac{\stepsize \radius}{\lipobj \sqrt{\dim \log(2\dim)}}\right)
  \frac{1}{\dim} \sum_{\iter=1}^\totaliter \frac{1}{\iter}
  \le \frac{c' \stepsize \radius \lipobj \sqrt{\dim}}{\sqrt{\log(2\dim)}}
  \log(2 \totaliter)
\end{align*}
for another universal constant $c'$.
Similarly, the fourth term in the
bound~\eqref{eqn:intermediate-regret-bound-general} becomes
\begin{align*}
  \frac{\sqrt{3}}{2}\radius\lipobj\dim\sqrt{\dim}
  \sum_{\iter=1}^\totaliter \frac{\smoothparam_{2,\iter}}{\smoothparam_{1,\iter}}
  = \frac{\sqrt{3}}{2}\radius\lipobj\dim\sqrt{\dim}\, \frac{1}{\dim^2}
  \sum_{\iter=1}^\totaliter \frac{1}{\iter}
  \le \frac{\sqrt{3} \radius \lipobj}{\sqrt{\dim}} \log(2\totaliter).
\end{align*}
Finally, since $\smoothparam_{1,\iter} = \smoothparam \radius / \iter$,
we may bound the last term in
expression~\eqref{eqn:intermediate-regret-bound-general} with
\begin{align*}
  \sqrt{3} \lipobj \sqrt{\dim}
  \sum_{\iter=1}^\totaliter \smoothparam_{1,\iter}
  = \sqrt{3} \lipobj \sqrt{\dim} \, \smoothparam \radius
  \sum_{\iter=1}^\totaliter \frac{1}{\iter}
  \le 2\sqrt{3} \smoothparam \radius \lipobj \sqrt{\dim}
  \log(2\totaliter).
\end{align*}
Using Jensen's inequality
to note that
$\E[f(\what{\optvar}(\totaliter)) - f(\optvar^*)]
\le \frac{1}{\totaliter} \sum_{\iter=1}^\totaliter
\E\left[f(\optvar[\iter]) - f(\optvar^*)\right]$
and eliminating lower-order terms,
we obtain the claim~\eqref{eqn:main-convergence-general}.

%-------------------------
% -*- Mode: latex; -*-

%--------------------------------
\subsection{Proof of Lemma~\ref{lemma:difference-to-gradient-general}}
\label{sec:proof-difference-to-gradient-general}

%% For convenience, we introduce the following notation. Let $\normal(0,I_{\dim
%%   \times \dim})$ denote the standard normal distribution in $\R^\dim$. Let
%% $\B^\dim(v,r)$ denote the $\ell_2$-ball in $\R^\dim$ with radius $r$ centered
%% at $v$, and similarly, $\S^{\dim-1}(v,r)$ denote the $(\dim-1)$-dimensional
%% $\ell_2$-sphere in $\R^\dim$ with radius $r$ centered at $v$. Let $\B^\dim =
%% \B^\dim(0,1)$ and $\S^{\dim-1} = \S^{\dim-1}(0,1)$.

The proof of Lemma~\ref{lemma:difference-to-gradient-general} relies
on the following key technical result:
\begin{lemma}
  \label{lemma:moment-bound}
  Let $k \ge 1$ and $\smoothparam \ge 0$.
  Let $\perturbrv_1 \sim \smoothingdist_1$ and $\perturbrv_2 \sim
  \smoothingdist_2$ be independent random variables in
  $\R^\dim$, where $\smoothingdist_1$ and $\smoothingdist_2$ satisfy
  Assumption~\ref{assumption:smoothing-dist-general}. There exists a
  constant $c_k$, depending only on $k$, such that for every $1$-Lipschitz
  convex function $h$,
  \begin{equation*}
    \E\left[\left|h(\perturbrv_1 + \smoothparam \perturbrv_2) -
      h(\perturbrv_1)\right|^k\right]
      \le c_k \smoothparam^k \left[
       \smoothparam \dim^{\frac{k}{2}}
       + 1 + \log^{\frac{k}{2}}(\dim+2k) \right].
  \end{equation*}
\end{lemma}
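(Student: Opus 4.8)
\emph{Plan.}
The strategy is to strip the estimate down, using only the $1$-Lipschitz continuity and convexity of $h$, to moment bounds for the linear functionals $\<\nabla h(\cdot),\perturbrv_2\>$; the real work is a careful bound for one such functional in the non-Gaussian cases, and this is where the logarithmic term enters.

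\emph{First reductions.}
Since $h$ is $1$-Lipschitz, $|h(\perturbrv_1+\smoothparam\perturbrv_2)-h(\perturbrv_1)|\le\smoothparam\ltwo{\perturbrv_2}$, so Lemma~\ref{lemma:vector-moments} already gives $\E[|h(\perturbrv_1+\smoothparam\perturbrv_2)-h(\perturbrv_1)|^k]\le c_k\smoothparam^k\dim^{k/2}$, which is at most $c_k\smoothparam^k(\smoothparam\dim^{k/2})$ whenever $\smoothparam\ge1$; hence I may assume $\smoothparam\le1$ (in the intended application $\smoothparam=\smoothparam_{2}/\smoothparam_{1}\le\tfrac12$ anyway). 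Because $\perturbrv_1$ has a density with respect to Lebesgue measure, $h$ is differentiable at $\perturbrv_1$ and at $\perturbrv_1+\smoothparam\perturbrv_2$ almost surely, and convexity sandwiches the increment:
\begin{equation*}
  \smoothparam\<\nabla h(\perturbrv_1),\perturbrv_2\>
  \;\le\; h(\perturbrv_1+\smoothparam\perturbrv_2)-h(\perturbrv_1)
  \;\le\; \smoothparam\<\nabla h(\perturbrv_1+\smoothparam\perturbrv_2),\perturbrv_2\>.
\end{equation*}
Thus $|h(\perturbrv_1+\smoothparam\perturbrv_2)-h(\perturbrv_1)|\le\smoothparam\bigl(|\<\nabla h(\perturbrv_1),\perturbrv_2\>|+|\<\nabla h(\perturbrv_1+\smoothparam\perturbrv_2),\perturbrv_2\>|\bigr)$, and using $(a+b)^k\le2^{k-1}(a^k+b^k)$ it suffices to bound $A:=\E[|\<\nabla h(\perturbrv_1),\perturbrv_2\>|^k]$ and $B:=\E[|\<\nabla h(\perturbrv_1+\smoothparam\perturbrv_2),\perturbrv_2\>|^k]$ each by $c_k\bigl(\smoothparam\dim^{k/2}+1+\log^{k/2}(\dim+2k)\bigr)$. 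The term $A$ is easy: $\nabla h(\perturbrv_1)$ lies in the unit $\ell_2$-ball and is independent of $\perturbrv_2$, so conditioning on $\perturbrv_1$ reduces $A$ to $\E[|\<g,\perturbrv_2\>|^k]$ with $\ltwo{g}\le1$ fixed, and for all three admissible laws of $\perturbrv_2$ (standard normal, uniform on $\sqrt{\dim+2}\,\B^\dim$, uniform on $\sqrt{\dim}\,\S^{\dim-1}$) the one-dimensional variable $\<g,\perturbrv_2\>$ has Gaussian-type moments of order $\ltwo{g}$ — exactly so in the normal case, and by a standard Beta-integral computation in the other two, the relevant marginal having density proportional to $(1-t^2)^{(\dim-c)/2}$ on $[-1,1]$ after rescaling. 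Hence $A\le c_k\ltwo{g}^k\le c_k$, absorbed in the ``$+1$.''

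\emph{The term $B$, and the main obstacle.}
Here $g_1:=\nabla h(\perturbrv_1+\smoothparam\perturbrv_2)$ is correlated with $\perturbrv_2$, and controlling this correlation is the crux. In the \emph{Gaussian} case I would condition on $W:=\perturbrv_1+\smoothparam\perturbrv_2$: then $\perturbrv_2\mid W\sim\normal\!\bigl(\tfrac{\smoothparam}{1+\smoothparam^2}W,\;\tfrac{1}{1+\smoothparam^2}I\bigr)$, so $\<\nabla h(W),\perturbrv_2\>\mid W$ is Gaussian with mean at most $\smoothparam\ltwo{W}$ in magnitude and variance at most $1$, giving $\E[|\<\nabla h(W),\perturbrv_2\>|^k\mid W]\le c_k(\smoothparam^k\ltwo{W}^k+1)$; since $\E\ltwo{W}^k\le c_k(1+\smoothparam^2)^{k/2}\dim^{k/2}\le c_k\dim^{k/2}$ by Lemma~\ref{lemma:vector-moments} and $\smoothparam\le1$, this yields $B\le c_k(\smoothparam^k\dim^{k/2}+1)\le c_k(\smoothparam\dim^{k/2}+1)$, with no logarithmic term. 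For the two \emph{non-Gaussian} cases the conditional law of $\perturbrv_2$ given $W$ is no longer tractable; instead I would bound $\<\nabla h(W),\perturbrv_2\>$ between the difference quotients $\smoothparam^{-1}[h(\perturbrv_1+\smoothparam\perturbrv_2)-h(\perturbrv_1)]$ and $\smoothparam^{-1}[h(\perturbrv_1+2\smoothparam\perturbrv_2)-h(\perturbrv_1+\smoothparam\perturbrv_2)]$ (both valid by convexity along $\perturbrv_2$) and then pass to normal/sphere variables by comparison — the uniform density on $\sqrt{\dim+2}\,\B^\dim$ is dominated pointwise by $\sqrt{\pi\dim}$ times the standard normal density on its support, and its radial coordinate concentrates at $\sqrt{\dim+2}$ to within $O(\dim^{-1/2})$, allowing a coupling of the uniform vectors to normal (resp.\ sphere) vectors with Euclidean error $O(1)$ (resp.\ $O(\smoothparam)$) — combining these via a case split on whether $\smoothparam\lesssim\dim^{-1/2}$. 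I expect this last step to be the main obstacle: it is precisely in controlling $B$ for the uniform-ball/sphere laws, where one must bound the contribution of directions along which $\nabla h(W)$ and $\perturbrv_2$ are correlated in a ``spiky'' way — so that the relevant moment behaves like $\E\linf{\perturbrv_2}^k$ rather than $\E\ltwo{\perturbrv_2}^k$ — that the factor $\log^{k/2}(\dim+2k)$ appears; it looks like an artifact of a union bound over coordinates rather than something intrinsic, but I do not see how to remove it by this route. Feeding the bounds on $A$ and $B$ back through the reduction of the first paragraph then gives the claimed inequality.
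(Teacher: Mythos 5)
Your proposal takes a genuinely different route from the paper, and its first reduction is sound, but it contains a real gap in two of the three cases the lemma must cover. You reduce via the convexity sandwich
\begin{equation*}
\smoothparam\langle\nabla h(\perturbrv_1),\perturbrv_2\rangle
\le h(\perturbrv_1 + \smoothparam\perturbrv_2) - h(\perturbrv_1)
\le \smoothparam\langle\nabla h(\perturbrv_1 + \smoothparam\perturbrv_2),\perturbrv_2\rangle,
\end{equation*}
splitting off the moments $A$ and $B$. The paper instead proceeds via a concentration event for $v\mapsto h(x+\smoothparam v)$ (Lemmas~\ref{lemma:rotation-concentration} and~\ref{lemma:concentration-h}), reduces the residual bias $\Delta_\smoothparam(\perturbrv_1)$ to a first moment by a crude pointwise bound, and then identifies the extremal $1$-Lipschitz convex $h$ as a shifted-norm function via a variational argument and stochastic dominance (Lemmas~\ref{lemma:linear-to-absolute-value} and~\ref{lemma:supremum-achieved-zero}). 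These are different decompositions, and yours has the attractive feature that in the Gaussian case the conditional-law computation for $B$ eliminates the $\log^{k/2}$ term entirely, which is strictly tighter than the paper's uniform treatment gives there.

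The gap is that your bound on $B = \E[|\langle\nabla h(\perturbrv_1+\smoothparam\perturbrv_2),\perturbrv_2\rangle|^k]$ is carried out only when both $\perturbrv_1,\perturbrv_2$ are standard normal. For the other two admissible pairs in Assumption~\ref{assumption:smoothing-dist-general} you sketch a density-comparison or coupling, but neither is executed and neither appears to close: pointwise domination of the uniform density on $\sqrt{\dim+2}\,\B^\dim$ by a multiple of the normal density costs a factor polynomial in $\dim$, and since one must apply it to the joint density of $(\perturbrv_1,\perturbrv_2)$ that factor is squared, which overwhelms the bound you need; and a coupling with $O(1)$ Euclidean error contributes an $O(1)$ term to $\langle\nabla h(W),\perturbrv_2\rangle$ only after $\nabla h(W)$ is controlled, which is the step in question. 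The alternative upper difference quotient you float, $\langle\nabla h(W),\perturbrv_2\rangle \le \smoothparam^{-1}[h(\perturbrv_1+2\smoothparam\perturbrv_2)-h(\perturbrv_1+\smoothparam\perturbrv_2)]$, is a quantity of exactly the type the lemma is trying to bound, so used naively it only shifts the problem to $\perturbrv_1+\smoothparam\perturbrv_2$, whose law is not one of the admissible ones. You have correctly identified $B$ in the ball and sphere cases as the sticking point, but as written the argument there is a sketch, not a proof, so the proposal is incomplete precisely where the $\log^{k/2}$ term has to come from.
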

\noindent
The proof is fairly technical, so we defer it to
Appendix~\ref{appendix:lipschitz-moment-bounds}.  It is based on the
dimension-free concentration of Lipschitz functions of standard
Gaussian vectors and vectors uniform on $\B^\dim$. \\

We return now to the proof of Lemma~\ref{lemma:difference-to-gradient-general}
proper, providing arguments for
inequalities~\eqref{eqn:non-smooth-expectation}
and~\eqref{eqn:non-smooth-second-moment}. For convenience
we recall the definition
$\lipobjfun(\statval)$ as the Lipschitz constant of $F(\cdot; \statval)$
(Assumption~\ref{assumption:lipschitz-ae}) and the
definition~\eqref{eqn:non-smooth-gradient-estimate} of the non-smooth
directional gradient
\begin{equation*}
  \gradestns(\optvar; \smoothparam_1, \smoothparam_2,
  \perturbval_1, \perturbval_2, \statval)
  = \frac{F(\optvar + \smoothparam_1 \perturbval_1
    + \smoothparam_2 \perturbval_2; \statval)
    - F(\optvar + \smoothparam_1 \perturbval_1; \statval)}{
    \smoothparam_2} \perturbval_2.
\end{equation*}
We begin with the second statement~\eqref{eqn:non-smooth-second-moment} of
Lemma~\ref{lemma:difference-to-gradient-general}. By applying
Lemma~\ref{lemma:moment-bound} to the $1$-Lipschitz convex function
$h(\altoptvar) = \frac{1}{\smoothparam_{1} \lipobjfun(\statrv)} F(\optvar +
\smoothparam_{1} \altoptvar; \statrv)$ and setting $\smoothparam =
\smoothparam_2 / \smoothparam_1$, we obtain
\begin{align}
  \E\left[\ltwo{
      \gradestns(\optvar; \smoothparam_1, \smoothparam_2,
      \perturbrv_1, \perturbrv_2, \statval)}^2\right]
  &= \frac{\smoothparam_{1}^2 \lipobjfun(\statval)^2}{
    \smoothparam_{2}^2}
  \E\left[ \left(h(\perturbrv_1 + (\smoothparam_{2} /
    \smoothparam_{1}) \perturbrv_2)
    - h(\perturbrv_1)\right)^2
    \ltwo{\perturbrv_2}^2
    \right] \nonumber \\
  & \le \frac{\lipobjfun(\statval)^2}{\smoothparam^2}
  \E\left[\left(h(\perturbrv_1 + \smoothparam
    \perturbrv_2) - h(\perturbrv_1)\right)^4\right]^\half
  \E\left[\ltwo{\perturbrv_2}^4\right]^\half.
  \label{eqn:intermediate-g-bound-general}
\end{align}
Lemma~\ref{lemma:vector-moments} implies that
$\E[\ltwo{\perturbrv_2}^4]^\half \le \sqrt{3} \dim$ for smoothing
distributions satisfying Assumption~\ref{assumption:smoothing-dist-general}.

It thus remains to bound
the first expectation in the product~\eqref{eqn:intermediate-g-bound-general}.
By Lemma~\ref{lemma:moment-bound},
\begin{equation*}
  \E\left[\left(h(\perturbrv_1 + \smoothparam
    \perturbrv_2) - h(\perturbrv_1)\right)^4 \right]
  \le c \smoothparam^4 \left[\smoothparam \dim^2
    + 1 + \log^2\dim \right]
\end{equation*}
for a numerical constant $c > 0$. Taking the square root of both sides of the
preceding display, then applying
inequality~\eqref{eqn:intermediate-g-bound-general}, yields
\begin{equation*}
  \E\left[\ltwo{
      \gradestns(\optvar; \smoothparam_1, \smoothparam_2,
      \perturbrv_1, \perturbrv_2, \statval)}^2\right]
  \le c \, \frac{\lipobjfun(\statval)^2}{
    \smoothparam^2} \,
  \smoothparam^2 \, \dim
  \left[\sqrt{\smoothparam} \dim
    + 1 + \log\dim \right].
\end{equation*}
Integrating over $\statval$ using the Lipschitz
Assumption~\ref{assumption:lipschitz-ae} proves the
inequality~\eqref{eqn:non-smooth-second-moment} in
Lemma~\ref{lemma:difference-to-gradient-general}.

For the first statement of the lemma, we define the shorthand
$F_{\smoothparam}(\optvar;\statval) = \E[F(\optvar + \smoothparam
  \perturbrv_1; \statval)]$,
where the expectation is over $\perturbrv_1 \sim \smoothingdist_1$, and note
that by Fubini's theorem, $\E[F_{\smoothparam} (\optvar;\statrv)] =
f_{\smoothparam}(\optvar)$.  By taking the expectation of
$\gradestns$ with respect to $\perturbrv_1$ only, we get
\begin{equation*}
  \E\left[
    \gradestns(\optvar; \smoothparam_1, \smoothparam_2,
    \perturbrv_1, \perturbval_2, \statval)\right]
  = \frac{F_{\smoothparam_1}(\optvar + \smoothparam_2 \perturbval_2;
    \statval) - F_{\smoothparam_1}(\optvar; \statval)}{
    \smoothparam_2} \perturbval_2.
%%   \subgrad[\iter] \mid \perturbrv[\iter]_2, \statrv[\iter],
%%     \mc{F}_{\iter-1}\right]
%%   = \frac{
%%     F_{\smoothparam_{1,\iter}}(\optvar[\iter]
%%     + \smoothparam_{2,\iter} \perturbrv[\iter]_2; \statrv[\iter])
%%     - F_{\smoothparam_{1,\iter}}(\optvar[\iter];\statrv[\iter])}
%%   {\smoothparam_{2,\iter}} \perturbrv[\iter]_2.
\end{equation*}
Since $\optvar \mapsto F(\optvar; \statval)$ is
$\lipobjfun(\statval)$-Lipschitz, Lemmas~E.2(iii) and E.3(iii) of the paper
by~\citet{DuchiBaWa12} imply $F_{\smoothparam}(\cdot;\statval)$ is
$\lipobjfun(\statval)$-Lipschitz, has
$\lipobjfun(\statval)/\smoothparam$-Lipschitz continuous gradient, and
satisfies the unbiasedness condition $\E[\nabla
  F_{\smoothparam}(\optvar;\statrv)] = \nabla
f_{\smoothparam}(\optvar)$. Therefore, the same argument bounding the
bias~\eqref{eqn:first-mean} in the proof of
Lemma~\ref{lemma:difference-to-gradient} (recall
inequalities~\eqref{eqn:difference-to-directional}
and~\eqref{eqn:expected-difference-to-gradient}) yields the
claim~\eqref{eqn:non-smooth-expectation}.

% Local Variables:
% TeX-master: "zero-order"
% End:

% Local Variables:
% TeX-master: "zero-order"
% End:

\section{Proofs of lower bounds}
\label{sec:lower-bound-proofs}

We now present the proofs for our lower bounds on the minimax
error~\eqref{eqn:minimax-opt-err-def}. Our lower bounds are based on
several techniques from the statistics and information-theory
literature~\cite[e.g.][]{YangBa99,Yu97,Arias-CastroCaDa13}.  Our basic
strategy is to reduce the optimization problem to several binary
hypothesis testing problems: we choose a finite set of functions, show
that optimizing well implies that one can solve each of the binary
hypothesis tests, and then, as in statistical minimax
theory~\cite{YangBa99,Yu97}, apply divergence-based lower bounds for
the probability of error in hypothesis testing problems.

%% We now present the proofs for our lower bounds on the minimax
%% error~\eqref{eqn:minimax-opt-err-def}. Our lower bounds are based
%% on the framework introduced by Agarwal et
%% al.~\cite{AgarwalBaRaWa12}, but we need more careful control of
%% several information-theoretic quantities than is necessary when
%% full gradient information is available.  The basic strategy is to
%% reduce the optimization problem to a testing problem: we choose a
%% finite set of (well-separated) functions, show that optimizing well
%% implies that one can identify the function being optimized, and
%% then, as in statistical minimax theory~\cite{Yu97,YangBa99}, apply
%% information-theoretic lower bounds on the probability of error in
%% hypothesis testing problems.

\subsection{Proof of Proposition~\ref{proposition:ltwo-lower-bound}}
\label{sec:proof-ltwo-lower-bound}

The basic outline of our proofs is similar.  At a high level, for each
binary vector $\packval$ in the Boolean hypercube $\packset =
\{-1,1\}^d$, we construct a linear function $f_\packval$ that is
``well-separated'' from the other functions $\{f_\altpackval,
\altpackval \neq \packval \}$.  Our notion of separation enforces the
following property: if $\optvar^\packval$ minimizes $f_\packval$ over
$\optdomain$, then for each coordinate $j \in [d]$ for which
$\sign(\what{\optvar}_j) \neq \sign(\optvar^\packval_j)$, there is an
additive penalty in the optimization accuracy
$f_\packval(\what{\optvar}) - f_\packval(\optvar^\packval)$.
Consequently, we can lower bound the optimization accuracy by the
testing error in the following \emph{canonical testing problem}:
nature chooses an index $\packval \in \packset$ uniformly at random,
and we must identify the indices $\packval_j$ based on the
observations $\obs[1], \ldots, \obs[\totaliter]$.  By applying lower
bounds on the testing error related to the Assouad and Le Cam
techniques for lower bounding minimax error~\cite{Yu97}, we thus
obtain lower bounds on the optimization error.

In more detail, consider (instantaneous) objective functions of the
form $F(\optvar; \statval) = \<\optvar, \statval\>$.  For each
$\packval \in \packset$, let $P_\packval$ denote the Gaussian
distribution $\normal(\delta \packval, \stddev^2 I_{d \times d})$,
where $\delta > 0$ is a parameter to be chosen, so that
\begin{align*}
f_\packval(\optvar) \defeq \E_{\statprob_\packval}[F(\optvar;
  \statrv)] = \delta \<\optvar, \packval\>.
\end{align*}
For each $\packval \in \packset$, let $\optvar^\packval$
minimize $f_\packval(\optvar)$ over
$\optdomain \defeq \{\optvar \in \R^d \mid \norm{\optvar}_q \le
\radius\}$.  A calculation shows that $\optvar^\packval =
-\radius\,d^{1/q} \,\packval$, so that $\sign(\optvar^\packval_j) =
-\packval_j$.  Next we claim that, for any vector $\what{\optvar} \in
\R^d$,
\begin{equation}
  f_\packval(\what{\optvar}) - f_\packval(\optvar^\packval)
  \ge \frac{1 - 1/q}{d^{1/q}} \delta \radius
  \sum_{j = 1}^d
  \indic{\sign(\what{\optvar}_j) \neq \sign(\optvar^\packval_j)}.
  \label{eqn:v-gap-lower-bound}
\end{equation}
Inequality~\eqref{eqn:v-gap-lower-bound} shows that if it is possible
to optimize well---that is, to find a vector $\what{\optvar}$ with a
relatively small optimality gap---then it is also possible to estimate
the signs of $\packval$.  To establish
inequality~\eqref{eqn:v-gap-lower-bound}, we state a
lemma providing a gap in optimality for solutions of related problems:
\begin{lemma}
  \label{lemma:onevec-optimization}
  For a given integer $i \in [d]$, consider the two optimization
  problems (over $\optvar \in \R^d$)
  \begin{equation*}
    \mbox{(A)} ~
    \begin{array}{rl}
      \minimize & \optvar^\top \onevec \\
      \subjectto & \norm{\optvar}_q \le 1
    \end{array}
    ~~~ \mbox{and} ~~~
    \mbox{(B)} ~
    \begin{array}{rl}
      \minimize & \optvar^\top \onevec \\
      \subjectto & \norm{\optvar}_q \le 1,
      ~ \optvar_j \ge 0 ~ \mbox{for~} j \in [i],
    \end{array}
  \end{equation*}
  with optimal solutions $\optvar^A$ and $\optvar^B$, respectively.
  Then $\<\onevec, \optvar^A\> \le \<\onevec, \optvar^B\> - (1 - 1/q)
  i / d^{1/q}$.
\end{lemma}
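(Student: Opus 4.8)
The plan is to solve problems (A) and (B) in closed form and then compare their optimal values by an elementary concavity estimate. Let $q'$ denote the conjugate exponent, $1/q + 1/q' = 1$. When $q = 1$ we have $1 - 1/q = 0$, and since the feasible region of (B) is contained in that of (A), the inequality $\<\onevec, \optvar^A\> \le \<\onevec, \optvar^B\>$ is immediate; so we may assume $q > 1$. The basic building block I would use is: for any subset $S \subseteq [d]$, the minimum of $\sum_{j \in S} \optvar_j$ over $\{\optvar \in \R^d : \norm{\optvar}_q \le 1\}$ equals $-|S|^{1/q'} = -|S|^{1 - 1/q}$, attained at the point with $\optvar_j = -|S|^{-1/q}$ for $j \in S$ and $\optvar_j = 0$ otherwise. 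This is just the equality case of H\"older's inequality applied to the all-ones vector supported on $S$.

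Taking $S = [d]$ immediately gives the optimal value of (A) as $\<\onevec, \optvar^A\> = -d^{1 - 1/q}$. For (B), I would first note that every feasible $\optvar$ obeys $\<\onevec, \optvar\> = \sum_{j=1}^i \optvar_j + \sum_{j > i} \optvar_j \ge \sum_{j > i} \optvar_j$, because the first $i$ coordinates are constrained to be nonnegative. Applying the building block with $S = \{i+1, \dots, d\}$ (the sign constraints on the remaining coordinates only shrink the feasible set, hence can only raise the minimum) yields $\<\onevec, \optvar\> \ge -(d-i)^{1 - 1/q}$, and this bound is attained by the feasible point $\optvar_j = 0$ for $j \le i$ and $\optvar_j = -(d-i)^{-1/q}$ for $j > i$. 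Hence $\<\onevec, \optvar^B\> = -(d-i)^{1 - 1/q}$.

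It remains to check that $d^{1 - 1/q} - (d-i)^{1 - 1/q} \ge (1 - 1/q)\, i \, d^{-1/q}$, since this rearranges precisely to the claimed inequality $\<\onevec, \optvar^A\> \le \<\onevec, \optvar^B\> - (1 - 1/q)\, i / d^{1/q}$. Writing $\alpha \defeq 1 - 1/q \in (0, 1]$, the function $\phi(t) = t^\alpha$ is concave on $[0, \infty)$, so it lies below its tangent line at $t = d$; evaluating that tangent inequality at $t = d - i$ gives $(d-i)^\alpha \le d^\alpha + \alpha d^{\alpha - 1}\bigl((d-i) - d\bigr) = d^\alpha - \alpha\, i\, d^{\alpha - 1}$, which is exactly the required estimate once one notes $d^{\alpha - 1} = d^{-1/q}$. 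This is essentially all there is to it: the only point that requires a moment's care is the reduction in (B) — making sure that, after passing to the sum over $j > i$, dropping the sign constraints on the remaining coordinates does not lower the optimal value — and this is handled cleanly by exhibiting the matching feasible point above, so I do not anticipate any genuine obstacle.
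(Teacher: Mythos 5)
Your proposal is correct and follows essentially the same route as the paper: compute the optimal values $-d^{1-1/q}$ and $-(d-i)^{1-1/q}$ for (A) and (B), then compare them via the tangent-line (concavity/convexity) estimate for $t \mapsto t^{1-1/q}$ at $t = d$. The additional detail you supply (the H\"older equality case, the explicit feasible point for (B), the separate treatment of $q = 1$) fills in steps the paper states without proof, but the argument is the same.
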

\noindent
See Appendix~\ref{sec:proof-onevec-optimization} for a proof.
Returning to inequality~\eqref{eqn:v-gap-lower-bound}, we note that
$f_\packval(\what{\optvar}) - f_\packval(\optvar^\packval) = \delta
\langle \packval, \what{\optvar} - \optvar^\packval\rangle$.  By
symmetry, Lemma~\ref{lemma:onevec-optimization} implies that for every
coordinate $j$ such that $\sign(\what{\optvar}_j) \neq
\sign(\optvar^\packval_j)$, the objective value
$f_\packval(\what{\optvar})$ must be at least a quantity $(1 - 1/q)
\delta \radius / d^{1/q}$ larger than the optimal value
$f_\packval(\optvar^\packval)$, which yields
inequality~\eqref{eqn:v-gap-lower-bound}.

Now we use inequality~\eqref{eqn:v-gap-lower-bound} to give a
probabilistic lower bound. Consider the mixture distribution $\P
\defeq (1/|\packset|) \sum_{\packval \in \packset}
\statprob_\packval$. For any estimator $\what{\optvar}$, we have
\begin{equation*}
  \max_\packval \E_{\statprob_\packval}[f_\packval(\what{\optvar}) -
    f_\packval(\optvar^\packval)] \ge \frac{1}{|\packset|}
  \sum_{\packval \in \packset} \E_{\statprob_\packval}[
    f_\packval(\what{\optvar}) - f_\packval(\optvar^\packval)] \ge
  \frac{1 - 1/q}{d^{1/q}} \, \delta \radius \sum_{j = 1}^d
  \P(\sign(\what{\optvar}_j) \neq -\packrv_j).
\end{equation*}
Consequently, the minimax error is lower bounded as
\begin{equation}
  \mxerr_\totaliter(\fnclass_{\lipobj,2}, \optdomain) \ge \frac{1 -
    1/q}{d^{1/q}} \, \delta \: \radius \: \Big \{ \inf_{\test}
  \sum_{j=1}^d \P(\test_j(\obs[1], \ldots, \obs[\totaliter]) \neq
  \packrv_j) \Big \},
  \label{eqn:testing-lower-bound}
\end{equation}
where $\test$ denotes any testing function mapping from the
observations $\{\obs[\iter]\}_{\iter=1}^\totaliter$ to $\{-1, 1\}^d$.

Next we lower bound the testing error by a total variation distance.
By Le Cam's inequality, for any set $A$ and distributions $P, Q$, we
have $P(A) + Q(A^c) \ge 1 - \tvnorm{P - Q}$. We apply this inequality
to the ``positive $j$th coordinate'' and ``negative $j$th coordinate''
sampling distributions
\begin{equation*}
  \statprob_{+j} \defeq \frac{1}{2^{d-1}} \sum_{\packval \in \packset
    : \packval_j = 1} \statprob_\packval ~~~ \mbox{and} ~~~
  \statprob_{-j} \defeq \frac{1}{2^{d-1}} \sum_{\packval \in \packset
    : \packval_j = -1} \statprob_\packval,
\end{equation*}
corresponding to conditional distributions over $\obs[\iter]$ given
the events $\{\packval_j = 1\}$ or $\{ \packval_j = -1\}$.  Applying
Le Cam's inequality yields
\begin{equation*}
  \P(\test_j(\obs[1:\totaliter]) \neq \packrv_j) = \half
  \statprob_{+j}(\test_j(\obs[1:\totaliter]) \neq 1) + \half
  \statprob_{-j}(\test_j(\obs[1:\totaliter]) \neq -1) \ge \half
  \left(1 - \tvnorm{\statprob_{+j} - \statprob_{-j}}\right).
\end{equation*}
Combined with the upper bound $\sum_{j=1}^d \tvnorm{\statprob_{+j} -
  \statprob_{-j}} \le \sqrt{d} (\sum_{j=1}^d \tvnorm{\statprob_{+j} -
  \statprob_{-j}}^2)^{\half}$ (from the Cauchy-Schwartz inequality), we obtain
\begin{align}
  \mxerr_\totaliter(\fnclass_{\lipobj,2}, \optdomain) & \ge \left(1 -
  \frac{1}{q}\right) \frac{\delta \radius}{2 d^{1/q}} \sum_{j = 1}^d
  \left(1 - \tvnorm{\statprob_{+j} - \statprob_{-j}}\right) \nonumber
  \\ 
& \geq \left(1 - \frac{1}{q}\right) \frac{d^{1 - 1/q} \delta \radius
  }{2} \Bigg(1 - \frac{1}{\sqrt{d}} \bigg(\sum_{j=1}^d
  \tvnorm{\statprob_{+j} - \statprob_{-j}}^2 \bigg)^\half\Bigg).
  \label{eqn:cs-lower-bound}
\end{align}

The remainder of the proof provides sharp enough bounds on $\sum_j
\tvnorm{\statprob_{+j} - \statprob_{-j}}^2$ to leverage
inequality~\eqref{eqn:cs-lower-bound}.  Define the covariance matrix
\begin{equation}
  \label{eqn:compute-covariance}
  \Sigma \defeq \stddev^2 \left[\begin{matrix}
      \ltwo{\optvar}^2 & \<\optvar, \altoptvar\> \\
      \<\optvar, \altoptvar\> & \ltwo{\altoptvar}^2
    \end{matrix} \right]
  = \stddev^2 \left[\optvar ~ \altoptvar\right]^\top
  \left[\optvar ~ \altoptvar \right],
\end{equation}
with the corresponding shorthand $\Sigma^\iter$ for the covariance
computed for the $t^{\text{th}}$ pair $(\optvar[\iter],
\altoptvar[\iter])$. We have:
\begin{lemma}
  \label{lemma:coordinate-tv-bound}
  For each $j \in \{1, \ldots, d\}$, the total variation norm is
  bounded as
  \begin{equation}
    \tvnorm{\statprob_{+j} - \statprob_{-j}}^2
    \le \delta^2 \sum_{\iter = 1}^\totaliter
    \E\left[
      \left[\begin{matrix} \optvar[\iter]_j \\ \altoptvar[\iter]_j
        \end{matrix}\right]^\top
      (\Sigma^\iter)^{-1}
      \left[\begin{matrix} \optvar[\iter]_j \\ \altoptvar[\iter]_j
        \end{matrix}\right]\right].
    \label{eqn:super-j-tv-bound}
  \end{equation}
\end{lemma}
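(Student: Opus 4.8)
The plan is to reduce the comparison of the two mixture distributions $\statprob_{+j}$ and $\statprob_{-j}$ to pairwise comparisons of Gaussians, apply Pinsker's inequality to each pair, and then invoke the chain rule for Kullback--Leibler divergence to handle the fact that the query pairs $(\optvar[\iter], \altoptvar[\iter])$ are chosen adaptively from the past observations. Write $\packval \oplus j$ for the vector obtained from $\packval$ by flipping its $j$th coordinate, and set $v_\iter \defeq (\optvar[\iter]_j, \altoptvar[\iter]_j)^\top \in \R^2$. Since
\[
  \statprob_{+j} - \statprob_{-j}
  = \frac{1}{2^{d-1}} \sum_{\packval \in \packset : \packval_j = 1}
  \big(\statprob_\packval - \statprob_{\packval \oplus j}\big),
\]
the triangle inequality for the total-variation norm followed by Jensen's inequality (convexity of $t \mapsto t^2$ on this convex combination) yields
\[
  \tvnorm{\statprob_{+j} - \statprob_{-j}}^2
  \le \frac{1}{2^{d-1}} \sum_{\packval : \packval_j = 1}
  \tvnorm{\statprob_\packval - \statprob_{\packval \oplus j}}^2 ,
\]
so it suffices to bound each pairwise term; if the algorithm is randomized I would first condition on its internal coins, which only contributes an outer expectation everywhere below.

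Fix $\packval$ with $\packval_j = 1$. Pinsker's inequality gives $\tvnorm{\statprob_\packval - \statprob_{\packval \oplus j}}^2 \le \half \, \mathrm{KL}(\statprob_\packval \,\|\, \statprob_{\packval \oplus j})$, and the chain rule for sequentially generated observations gives
\[
  \mathrm{KL}(\statprob_\packval \,\|\, \statprob_{\packval \oplus j})
  = \sum_{\iter=1}^\totaliter \E_{\statprob_\packval}\!\Big[
    \mathrm{KL}\big(\mathrm{Law}_\packval(\obs[\iter] \mid \mc{F}_{\iter-1})
    \,\big\|\, \mathrm{Law}_{\packval \oplus j}(\obs[\iter] \mid \mc{F}_{\iter-1})\big)\Big],
\]
where $\mc{F}_{\iter-1}$ is the $\sigma$-field generated by $\obs[1], \ldots, \obs[\iter-1]$. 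Conditionally on $\mc{F}_{\iter-1}$ the pair $(\optvar[\iter], \altoptvar[\iter])$ is a fixed measurable function of the past, and under $\statprob_\packval$ (i.e.\ with $\statrv[\iter]$ drawn i.i.d.\ from $\normal(\delta \packval, \stddev^2 I)$) the observation $\obs[\iter] = [\optvar[\iter] \ \altoptvar[\iter]]^\top \statrv[\iter]$ is Gaussian with mean $\delta [\optvar[\iter] \ \altoptvar[\iter]]^\top \packval$ and covariance $\Sigma^\iter = \stddev^2 [\optvar[\iter] \ \altoptvar[\iter]]^\top [\optvar[\iter] \ \altoptvar[\iter]]$, which does not depend on $\packval$. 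Hence the two conditional laws are Gaussians with common covariance $\Sigma^\iter$ whose means differ by $\delta [\optvar[\iter] \ \altoptvar[\iter]]^\top\big(\packval - (\packval \oplus j)\big) = 2\delta v_\iter$ (using $\packval_j = 1$, so the coordinate difference is $2e_j$). The KL divergence between two Gaussians of equal covariance then equals $\half (2\delta v_\iter)^\top (\Sigma^\iter)^{-1} (2\delta v_\iter) = 2\delta^2 \, v_\iter^\top (\Sigma^\iter)^{-1} v_\iter$; combining this with Pinsker's inequality cancels the factor $2$ and gives
\[
  \tvnorm{\statprob_\packval - \statprob_{\packval \oplus j}}^2
  \le \delta^2 \sum_{\iter=1}^\totaliter \E_{\statprob_\packval}\!\big[
    v_\iter^\top (\Sigma^\iter)^{-1} v_\iter\big].
\]
Averaging over $\packval$ with $\packval_j = 1$ — which is exactly how $\statprob_{+j}$ is built — recovers inequality~\eqref{eqn:super-j-tv-bound}, the expectation there being under $\statprob_{+j}$. (It is worth noting for the sequel that $v_\iter^\top(\Sigma^\iter)^{-1}v_\iter = \stddev^{-2}$ times the squared norm of the projection of $e_j$ onto the column span of $[\optvar[\iter]\ \altoptvar[\iter]]$, so summing over $j$ gives at most $2\stddev^{-2}$, reflecting the two-point structure of the feedback.)

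The argument involves no deep difficulty, but two technical points require care. The first is the adaptivity: because $\optvar[\iter]$ and $\altoptvar[\iter]$ depend on the observed data, one cannot treat the design as deterministic, and the chain-rule decomposition above is precisely what legitimizes the per-round analysis — conditionally on $\mc{F}_{\iter-1}$ both $\Sigma^\iter$ and the per-round mean gap $2\delta v_\iter$ are functions of the realized past only. The second is that $\Sigma^\iter$ can be singular (for instance when $\optvar[\iter]$ and $\altoptvar[\iter]$ are collinear, or both zero), so $(\Sigma^\iter)^{-1}$ must be read as the Moore--Penrose pseudoinverse; one then checks that the mean gap $2\delta v_\iter = 2\delta [\optvar[\iter] \ \altoptvar[\iter]]^\top e_j$ lies in the range of $\Sigma^\iter$, which is what makes the relevant KL divergence finite. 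This holds because $[\optvar[\iter] \ \altoptvar[\iter]]^\top e_j$ lies in the column space of $[\optvar[\iter] \ \altoptvar[\iter]]^\top$, which coincides with the range of $\Sigma^\iter = \stddev^2 [\optvar[\iter] \ \altoptvar[\iter]]^\top [\optvar[\iter] \ \altoptvar[\iter]]$. With these points addressed the remaining computation is routine.
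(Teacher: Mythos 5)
Your argument is correct and arrives at the bound by essentially the same route as the paper: reduce the mixture comparison to pairwise comparisons over $\packval$, apply a Pinsker-type inequality, use the chain rule for KL divergence to handle the adaptively chosen query pairs, and evaluate the per-round KL exactly for Gaussians with shared covariance. The one place you diverge is the convexity step: you first apply the triangle inequality to $\tvnorm{\statprob_{+j} - \statprob_{-j}}$ and then Jensen's inequality for $t \mapsto t^2$ to pass to pairwise TV distances, whereas the paper applies a symmetrized Pinsker bound directly to the mixtures and then invokes \emph{joint convexity of the KL divergence} to move to pairwise KLs. Both arguments are valid and yield the same bound (since for equal-covariance Gaussians the forward and reverse per-round KLs coincide); yours is marginally more elementary in that it avoids appealing to joint convexity of $\dkl{\cdot}{\cdot}$. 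The end result differs only in how the outer expectation's measure is written ($\statprob_{+j}$ for you versus $\tfrac12(\statprob_{+j}+\statprob_{-j})$ in the paper), a cosmetic distinction given the lemma's unspecified $\E$. Your explicit treatment of the potentially singular covariance $\Sigma^\iter$ via the Moore--Penrose pseudoinverse, and the observation that the mean gap lies in its range, is a careful addition that the paper leaves implicit.
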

\noindent
See Appendix~\ref{sec:proof-coordinate-tv-bound} for a proof
of this lemma.

Now we use the bound~\eqref{eqn:super-j-tv-bound} to provide a further
lower bound on inequality~\eqref{eqn:cs-lower-bound}. We first note
the identity
\begin{equation*}
  \sum_{j=1}^d
  \left[\begin{matrix} \optvar_j \\ \altoptvar_j
    \end{matrix}\right]
  \left[\begin{matrix} \optvar_j \\ \altoptvar_j
    \end{matrix}\right]^\top
  = \left[\begin{matrix}
      \ltwo{\optvar}^2 & \<\optvar, \altoptvar\> \\
      \<\optvar, \altoptvar\> & \ltwo{\altoptvar}^2 \end{matrix}\right].
\end{equation*}
Recalling the definition~\eqref{eqn:compute-covariance} of the covariance
matrix $\Sigma$, Lemma~\ref{lemma:coordinate-tv-bound} implies that
\begin{align}
  \sum_{j=1}^d
  \tvnorm{\statprob_{+j} - \statprob_{-j}}^2
  & \le \delta^2 \sum_{\iter = 1}^\totaliter
  \E\bigg[\sum_{j=1}^d \tr\bigg(
    (\Sigma^\iter)^{-1}
    \left[\begin{matrix} \optvar[\iter]_j \\ \altoptvar[\iter]_j
      \end{matrix}\right]\left[\begin{matrix}
        \optvar[\iter]_j \\ \altoptvar[\iter]_j
      \end{matrix}\right]^\top
    \bigg)\bigg] \nonumber \\
  & = \frac{\delta^2 }{\stddev^2}
  \sum_{\iter = 1}^\totaliter \E\left[\tr\left((\Sigma^\iter)^{-1}
    \Sigma^\iter\right)\right]
  = 2 \frac{\totaliter \delta^2}{\stddev^2}.
  \label{eqn:summed-tv-q-norms}
\end{align}
Returning to the estimation lower bound~\eqref{eqn:cs-lower-bound},
we thus find the nearly final lower bound
\begin{equation}
  \mxerr_\totaliter(\fnclass_{\lipobj,2}, \optdomain)
  \ge \left(1 - \frac{1}{q}\right)
  \frac{d^{1 - 1/q}\delta \radius}{2} \bigg(1 -
  \left(\frac{2\totaliter \delta^2}{d \stddev^2}\right)^\half
  \bigg).
  \label{eqn:q-key-inequality}
\end{equation}

Enforcing $(F, \statprob) \in \fnclass_{\lipobj,2}$ amounts to choosing the
parameters $\stddev^2$ and $\delta^2$ so that $\E[\ltwo{\statrv}^2] \leq
\lipobj^2$ for $\statrv \sim \normal(\delta\cubeval, \stddev^2 I_{d \times
  d})$, after which we may use inequality~\eqref{eqn:q-key-inequality} to
complete the proof of the lower bound. By construction, we have
$\E[\ltwo{\statrv}^2] = (\delta^2 + \stddev^2) \dim$, so choosing $\stddev^2 =
8 \lipobj^2 / 9 \dim$ and $\delta^2 = (\lipobj^2 / 9)\min\{1/\totaliter,
1/\dim\}$ guarantees that
\begin{equation*}
  1 - \left(\frac{2 \totaliter \delta^2}{d \stddev^2}\right)^\half
  \ge 1 - \left(\frac{18}{72}\right)^\half
  = \half
  ~~~ \mbox{and} ~~~
  \E[\ltwo{\statrv}^2] = \frac{8 \lipobj^2}{9}
  + \frac{\lipobj^2 \dim}{9} \min\left\{\frac{1}{\totaliter},
  \frac{1}{\dim}\right\} \le \lipobj^2.
\end{equation*}
Substituting these choices of $\delta$ and $\stddev^2$ in
inequality~\eqref{eqn:q-key-inequality}
gives the lower bound
\begin{equation*}
  \mxerr_\totaliter(\fnclass_{\lipobj,2}, \optdomain)
  \ge \frac{1}{12} \left(1 - \frac{1}{q}\right)
  d^{1 - 1/q} \radius \lipobj
  \min\left\{\frac{1}{\sqrt{\totaliter}}, \frac{1}{\sqrt{d}}\right\}
  = \frac{1}{12} \left(1 - \frac{1}{q}\right)
  \frac{d^{1 - 1/q} \radius \lipobj}{\sqrt{\totaliter}}
  \min\left\{1, \sqrt{\totaliter / d}\right\}.
\end{equation*}

To complete the proof of the claim~\eqref{eqn:ltwo-lower-bound}, we note that
the above lower bound also applies to any $d_0$-dimensional problem for
$d_0 \le d$. More rigorously, we choose $\packset = \{-1, 1\}^{d_0} \times
\{0\}^{d-d_0}$, and define the sampling distribution $\statprob_\packval$ on
$\statrv$ so that given $\packval \in \packset$, the coordinate distributions
of $\statrv$ are independent with $\statrv_j \sim \normal(\delta \packval_j,
\stddev^2)$ for $j \le d_0$ and $\statrv_j = 0$ for $j > d_0$. A reproduction
of the preceding proof, substituting $d_0 \le d$ for each appearance of the
dimension $d$, then yields
the claimed bound~\eqref{eqn:ltwo-lower-bound}
when we choose $d_0 = \min\{\totaliter, d\}$.

\paragraph{Remarks on multiple evaluations:}
By an extension of Lemma~\ref{lemma:coordinate-tv-bound}, we may consider
the case in which at each iteration, the method may
query for function values at the $\numobs$ points \mbox{$\optvar_{(1)},
\ldots, \optvar_{(\numobs)} \in \R^d$}. Let
$\optvar[\iter]_{j,(i)}$ denote the $j$th coordinate of the $i$th query
point in iteration $\iter$. In this case, an immediate analogue of
Lemma~\ref{lemma:coordinate-tv-bound} implies
\begin{equation*}
  \tvnorm{\statprob_{+j} - \statprob_{-j}}^2
  \le \delta^2 \sum_{\iter=1}^\totaliter
  \E\left[\begin{matrix} \optvar[\iter]_{j,(1)} \\
        \vdots \\ \optvar[\iter]_{j,(\numobs)} \end{matrix}\right]^\top
    (\Sigma^\iter)^{-1}
    \left[\begin{matrix} \optvar[\iter]_{j,(1)} \\
        \vdots \\ \optvar[\iter]_{j,(\numobs)} \end{matrix}\right],
\end{equation*}
where $\Sigma^\iter = \stddev^2 [\optvar[\iter]_{(1)} ~ \cdots ~
\optvar[\iter]_{(\numobs)}]^\top[\optvar[\iter]_{(1)} ~ \cdots ~
\optvar[\iter]_{(\numobs)}]$ denotes a covariance matrix as in
equation~\eqref{eqn:compute-covariance}. Following the calculation
of inequality~\eqref{eqn:summed-tv-q-norms}, we obtain
\begin{equation*}
  \sum_{j=1}^d
  \tvnorm{\statprob_{+j} - \statprob_{-j}}^2
  \le \frac{\delta^2}{\stddev^2} \sum_{\iter=1}^\totaliter
  \E\left[\tr\left((\Sigma^\iter)^{-1} \Sigma^\iter\right)\right]
  = \frac{\numobs \totaliter \delta^2}{\stddev^2}.
\end{equation*}
Substituting this inequality in place of~\eqref{eqn:summed-tv-q-norms}
and following the subsequent proof implies the lower 
bound $\frac{1}{10} (1 - q^{-1})d^{1 - 1/q} \radius \lipobj
/ \sqrt{\numobs \totaliter} \cdot \min\{1, \sqrt{\totaliter / d}\}$.
Replacing $d$ with $\min\{\totaliter, d\}$
gives inequality~\eqref{eqn:multi-point-lower-bound}.

%%%%%%%%%%%%%%%%%%%%%%%%%%%%%%%%%%%%%%%%%%%%%%%%%%%%%%%%%%%%%%%%%%%%%%%%%%%

\subsection{Proof of Proposition~\ref{proposition:lone-lower-bound}}
\label{sec:proof-lone-lower-bound}

The proof is similar to that of Proposition~\ref{proposition:ltwo-lower-bound},
except instead of using the set $\packset = \{-1,1\}^d$, we use the $2d$
standard basis vectors and their negatives, that is, $\packset = \{\pm
e_j\}_{j=1}^d$. We use the same sampling distributions as in the proof of
Proposition~\ref{proposition:ltwo-lower-bound}, so under $\statprob_\packval$
the random vectors $\statrv \sim \normal(\delta \packval, \stddev^2 I_{d
  \times d})$, and we have $f_\packval = \E_{\statprob_\packval}[F(\optvar;
  \statrv)] = \delta\<\optvar, \packval\>$.  Let us define $\statprob_j$ to be
the distribution $\statprob_\packval$ for $\packval = e_j$ and similarly for
$\statprob_{-j}$, and let $\optvar^\packval = \argmin_{\optvar} \{
f_\packval(\optvar) \mid \lone{\optvar} \le \radius\} = - \radius \packval$.

We now provide the reduction from optimization to testing.  First,
if $\packval = \pm e_j$, then any estimator $\what{\optvar}$ satisfying
$\sign(\what{\optvar}_j) \neq \sign(\optvar^\packval_j)$ must have
$f_\packval(\what{\optvar}) - f_\packval(\optvar^\packval) \ge \delta
\radius$.  We thus see that for $\packval \in \{\pm e_j\}$,
\begin{equation*}
  f_\packval(\what{\optvar}) - f_\packval(\optvar^\packval)
  \ge \delta \, \radius \; \indic{\sign(\what{\optvar}_j) \neq
    \sign(\optvar^\packval_j)}.
\end{equation*}
Consequently,
we obtain the multiple binary hypothesis testing lower bound
\begin{align*}
  \lefteqn{\max_{\packval} \E_{\statprob_\packval}[f_\packval(\what{\optvar})
      - f_\packval(\optvar^\packval)]
    \ge \frac{1}{2d} \sum_{\packval \in \packset}
    \E_{\statprob_\packval}[f_\packval(\what{\optvar})
      - f_\packval(\optvar^\packval)]} \\
  & \qquad\qquad\quad ~ \ge \frac{\delta \radius}{2d} \sum_{j=1}^d
  \left[\statprob_j(\sign(\what{\optvar}_j) \neq -1)
    + \statprob_{-j}(\sign(\what{\optvar}_j) \neq 1)\right]
  \stackrel{(i)}{\ge} \frac{\delta \radius}{2d} \sum_{j = 1}^d
  \left[1 - \tvnorm{\statprob_j - \statprob_{-j}}\right].
\end{align*}
For the final inequality~$(i)$, we applied Le Cam's inequality
as in the proof of Proposition~\ref{proposition:ltwo-lower-bound}. Thus,
as in
the derivation of inequality~\eqref{eqn:cs-lower-bound} from the
Cauchy-Schwarz inequality, this yields
\begin{equation}
  \mxerr_\totaliter(\fnclass_{\lipobj,\infty}, \optdomain) \ge
  \max_{\packval} \E_{\statprob_\packval}[f_\packval(\what{\optvar})
    - f_\packval(\optvar^\packval)]
  \ge \frac{\delta \radius}{2}
  \Bigg(1 - \frac{1}{\sqrt{d}} \bigg(\sum_{j=1}^d \tvnorm{\statprob_j
    - \statprob_{-j}}^2\bigg)^\half \Bigg).
  \label{eqn:l1-cs-lower-bound}
\end{equation}

We now turn to providing a bound on $\sum_{j=1}^d \tvnorm{\statprob_j
  - \statprob_{-j}}^2$ analogous to that in the proof of
Proposition~\ref{proposition:ltwo-lower-bound}. We claim that
\begin{equation}
  \label{eqn:l1-tv-sum-bound}
  \sum_{j=1}^d \tvnorm{\statprob_j - \statprob_{-j}}^2
  \le 2 \frac{\totaliter \delta^2}{\stddev^2}.
\end{equation}
Inequality~\eqref{eqn:l1-tv-sum-bound} is nearly immediate from
Lemma~\ref{lemma:coordinate-tv-bound}.
Indeed, given the pair $\pairoptvar = [\optvar ~ \altoptvar] \in \R^{d \times
  2}$, the observation $\obs = \pairoptvar^\top \statrv$ is distributed
(conditional on $\packval$ and $\pairoptvar$) as $\normal(\delta
\pairoptvar^\top \packval, \Sigma)$ where $\Sigma = \stddev^2 \pairoptvar^\top
\pairoptvar$ is the covariance~\eqref{eqn:compute-covariance}.
For $\packval = e_j$ and $\altpackval = -e_j$, we know that
$\<\optvar, \packval - \altpackval\> = 2 \optvar_j$ and so
\begin{equation*}
  \dkl{\normal(\delta \pairoptvar^\top \packval, \Sigma)}{\normal(\delta
    \pairoptvar^\top \altpackval, \Sigma)}
  = 2 \delta^2
  \left[\begin{matrix} \optvar_j \\ \altoptvar_j \end{matrix}\right]^\top
  \Sigma^{-1}\left[\begin{matrix} \optvar_j \\ \altoptvar_j
    \end{matrix}\right].
\end{equation*}
By analogy with the proof of Lemma~\ref{lemma:coordinate-tv-bound}, we may
repeat the derivation of inequalities~\eqref{eqn:super-j-tv-bound}
and~\eqref{eqn:summed-tv-q-norms} \emph{mutatis mutandis} to obtain
inequality~\eqref{eqn:l1-tv-sum-bound}.
%% \begin{equation*}
%%   \sum_{j=1}^d \tvnorm{\statprob_j - \statprob_{-j}}^2
%%   \le \frac{1}{4} \sum_{j=1}^d \left[\dkl{\statprob_j}{\statprob_{-j}}
%%   + \dkl{\statprob_{-j}}{\statprob_j}\right]
%%   \le 2\frac{\totaliter \delta^2}{\stddev^2}.
%% \end{equation*}
%
Combining inequalities~\eqref{eqn:l1-cs-lower-bound}
and~\eqref{eqn:l1-tv-sum-bound} then gives the lower bound
\begin{equation*}
  \mxerr_\totaliter(\fnclass_{\lipobj,\infty}, \optdomain)
  \ge \frac{\delta \radius}{2}
  \left(1 - \bigg(\frac{2 \delta^2 \totaliter}{d \stddev^2}\bigg)^\half
  \right).
\end{equation*}

It thus remains to choose $\delta$ and $\stddev^2$ to guarantee the
containment $(F, \statprob) \in \fnclass_{\lipobj,\infty}$.
Equivalently, we must establish the gradient bound
$\E[\linf{\statrv}^2] \le \lipobj^2$, with which the next lemma helps.
\begin{lemma}
  \label{lemma:normal-linf-norm}
  Given any vector with $\linf{\cubeval} \le 1$, and the random vector
  $\statrv \sim \normal(\delta \cubeval, \stddev^2 I_{d \times d})$, we
  have
  \begin{equation*}
    \E[\linf{\statrv}^2] \le
    3 \stddev^2 \log(3 \dim) + 4 \delta^2.
  \end{equation*}
\end{lemma}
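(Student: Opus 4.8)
The plan is to control $\E[\linf{\statrv}^2] = \E[\max_{j \le \dim} \statrv_j^2]$ by a Chernoff-type maximal inequality applied directly to the squared coordinates $\statrv_j^2$, carrying the mean shift \emph{inside} the exponential moment rather than peeling it off first (the latter is too lossy in the constants). Recall that the coordinates $\statrv_j$ are independent with $\statrv_j \sim \normal(\delta \cubeval_j, \stddev^2)$, and $\delta^2 \cubeval_j^2 \le \delta^2$ since $\linf{\cubeval} \le 1$.

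First I would record the exponential moment of a squared Gaussian: for $X \sim \normal(\mu, \sigma^2)$ and any $0 \le \lambda < 1/(2\sigma^2)$, completing the square in the defining integral gives
\[
  \E\big[\exp(\lambda X^2)\big]
  = \frac{1}{\sqrt{1 - 2\lambda \sigma^2}}\,
    \exp\!\Big(\frac{\lambda \mu^2}{1 - 2\lambda \sigma^2}\Big).
\]
Choosing $\lambda = 1/(3\stddev^2)$ makes $1 - 2\lambda\stddev^2 = 1/3$, so, using $\mu^2 = \delta^2 \cubeval_j^2 \le \delta^2$, every coordinate satisfies $\E[\exp(\statrv_j^2/(3\stddev^2))] \le \sqrt{3}\,\exp(\delta^2/\stddev^2)$. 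Then Jensen's inequality for $x \mapsto \exp(x/(3\stddev^2))$ together with $\max_j \exp(\statrv_j^2/(3\stddev^2)) \le \sum_j \exp(\statrv_j^2/(3\stddev^2))$ yields
\[
  \exp\!\Big(\frac{\E[\max_j \statrv_j^2]}{3\stddev^2}\Big)
  \le \E\Big[\max_j \exp\!\Big(\tfrac{\statrv_j^2}{3\stddev^2}\Big)\Big]
  \le \sum_{j=1}^{\dim} \E\Big[\exp\!\Big(\tfrac{\statrv_j^2}{3\stddev^2}\Big)\Big]
  \le \dim \sqrt{3}\,\exp\!\Big(\tfrac{\delta^2}{\stddev^2}\Big).
\]
Taking logarithms and multiplying through by $3\stddev^2$ gives $\E[\linf{\statrv}^2] \le 3\stddev^2 \log \dim + \frac{3}{2}\stddev^2 \log 3 + 3\delta^2$.

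To finish, I would weaken the constants: since $\frac{3}{2}\log 3 \le 3\log 3$ and $3\delta^2 \le 4\delta^2$, the bound is at most $3\stddev^2(\log\dim + \log 3) + 4\delta^2 = 3\stddev^2 \log(3\dim) + 4\delta^2$, as claimed. There is no real obstacle beyond bookkeeping; the only point needing care is the choice of the exponential parameter $\lambda = 1/(3\stddev^2)$, which is exactly what forces the leading term to be $3\stddev^2\log(3\dim)$ while leaving enough slack to absorb both the $\frac{3}{2}\stddev^2\log 3$ constant and the non-centrality contribution $3\delta^2$ into the stated form.
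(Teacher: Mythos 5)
Your argument is correct and arrives at the stated bound, but the route is genuinely different from the paper's. The paper first strips off the mean: writing $Z = \statrv - \delta\cubeval$ and using the elementary inequality $\linf{\statrv}^2 \le (1+\epsilon)\linf{Z}^2 + (1+\epsilon^{-1})\delta^2$, it reduces to a \emph{centered} Gaussian maximal inequality (cited from Buldygin--Kozachenko), and then tunes $(\epsilon,\lambda) = (1/3, 4/9)$ to match the stated constants. You instead compute the exact moment generating function of a non-central $\chi^2$ coordinate, $\E[\exp(\lambda X^2)] = (1-2\lambda\sigma^2)^{-1/2}\exp\bigl(\lambda\mu^2/(1-2\lambda\sigma^2)\bigr)$, and run the standard ``Jensen plus union bound'' maximal argument on the coordinates directly, keeping the mean shift inside the exponential. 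What this buys you is a fully self-contained proof (no external Gaussian maximal inequality to cite) and a sharper intermediate bound: your argument yields $3\stddev^2\log d + \tfrac{3}{2}\stddev^2\log 3 + 3\delta^2$, with $3\delta^2$ in place of the paper's $4\delta^2$, which you then deliberately weaken to land exactly on the claimed $3\stddev^2\log(3\dim) + 4\delta^2$. The paper's decomposition is arguably quicker to state if one is willing to treat the centered maximal inequality as a black box, but yours is tighter and requires only the closed-form Gaussian MGF; both set a single tuning parameter to force the coefficient $3$ in front of $\stddev^2\log(3\dim)$. One small expository note: the step $\exp(\E[\max_j \statrv_j^2]/(3\stddev^2)) \le \E[\max_j \exp(\statrv_j^2/(3\stddev^2))]$ uses Jensen plus the monotonicity of $\exp$ to pull the maximum inside; you state this correctly but might spell out that monotonicity is what lets $\exp$ and $\max$ commute.
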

\begin{proof}
  The vector $Z = \statrv - \delta \cubeval$ has $\normal(0, \stddev^2
  I_{d \times d})$ distribution.
  By Jensen's inequality, for all $\epsilon \ge 0$ we have
  \begin{equation*}
    \linf{\statrv}^2 \le (1 + \epsilon) \linf{Z}^2 + (1 + \epsilon^{-1})
    \delta^2 \linf{\cubeval}^2
    \le (1 + \epsilon) \linf{Z}^2 + (1 + \epsilon^{-1}) \delta^2.
  \end{equation*}
  Classical results on Gaussian vectors~\cite[Chapter
    2]{BuldyginKo00} imply
  $\E[\linf{Z}^2] \le \stddev^2 (\frac{1}{\lambda} \log d + 
  \frac{1}{2\lambda} \log \frac{1}{1 - 2\lambda})$ for all
  $\lambda \in [0, 1/2]$, so taking
  $\epsilon = 1/3$ and $\lambda = 4/9$ implies the lemma.
\end{proof}
\noindent
As a consequence of Lemma~\ref{lemma:normal-linf-norm}, by taking
\begin{equation*}
  \stddev^2 = \frac{2 \lipobj^2}{9\log(3d)}
  ~~~ \mbox{and} ~~~
  \delta^2 = \frac{\lipobj^2}{36 \log(3d)} \min \left\{1, \frac{d}{\totaliter}
  \right\},
\end{equation*}
we obtain the bounds
\begin{equation*}
  \E[\linf{\statrv}^2] \le \frac{2}{3} \lipobj^2
  + \frac{4}{36} \lipobj^2 < \lipobj^2
  ~~~ \mbox{and} ~~~
  1 - \left(\frac{2 \delta^2 \totaliter}{d \stddev^2}\right)^\half
  \ge 1 - \left(\frac{18}{72}\right)^\half = \half.
\end{equation*}
Substituting into the lower
bound on $\mxerr_\totaliter$ yields
\begin{equation*}
  \mxerr_\totaliter(\fnclass_{\lipobj,\infty}, \optdomain)
  \ge 
  \frac{\delta \radius}{4}
  \ge \frac{1}{24 \sqrt{\log(3d)}}
  \, \frac{\lipobj \radius}{\sqrt{\totaliter}}
  \min\left\{\sqrt{\totaliter}, \sqrt{d}\right\}.
\end{equation*}
Modulo this lower bound holding for each dimension $d_0 \le d$, this
completes the proof.

To complete the proof, we note that as in the proof of
Proposition~\ref{proposition:ltwo-lower-bound}, we may provide a lower bound
on the optimization error for any $d_0 \le d$-dimensional problem. In
particular fix $d_0 \le d$ and let $\packset = \{\pm e_j\}_{j=1}^{d_0} \subset
\R^d$. Now, conditional on $\packval \in \packset$, let
$\statprob_\packval$ denote the distribution on $\statrv$ with
independent coordinates whose distributions are
$\statrv_j \sim \normal(\delta \packval_j, \stddev^2)$ for
$j \le d_0$ and $\statrv_j = 0$ for $j > d_0$.
As in the proof Proposition~\ref{proposition:ltwo-lower-bound}, 
we may reproduce the preceding arguments by substituting $d_0 \le d$
for every appearance of the dimension $d$,
giving that for all $d_0 \le d$,
\begin{equation*}
  \mxerr_\totaliter(\fnclass_{\lipobj,\infty}, \optdomain)
  \ge \frac{1}{24 \sqrt{\log (3 d_0)}}
  \, \frac{\lipobj \radius}{\sqrt{\totaliter}}
  \min\left\{\sqrt{\totaliter}, \sqrt{d_0}\right\}.
\end{equation*}
Choosing $d_0 = \min\{d, \totaliter\}$ completes
the proof of Proposition~\ref{proposition:lone-lower-bound}.

%%%%%%%%%%%%%%%%%%%%%%%%%%%%%%%%%%%%%%%%%%%%%%%%%%%%%%%%%%%%%%%%%%%%%%%%%%%

\subsection{Proof of Proposition~\ref{proposition:single-observation}}

This proof is somewhat similar to that of
Proposition~\ref{proposition:ltwo-lower-bound}, in that we use the set
$\packset = \{-1, 1\}^d$ to construct a collection of functions whose minima
are relatively well-separated, but for which function evaluations are hard
to distinguish. In particular, for $\delta > 0$, we construct functions
$f_\packval$ whose minima---for different elements $\packval,
\altpackval$---are all of the order $\delta \lone{\packval - \altpackval}$
distant from one another, yet $\sup_{\optvar} |f_\packval(\optvar) -
f_\altpackval(\optvar)| \lesssim \delta$, so that many observations are
necessary to distinguish the functions.

In more detail, for $\packval \in\packset = \{-1, 1\}^d$, define the
probability distribution $\statprob_\packval$ to be supported on
$\{\packval\} \times \R$, where each independent draw $\statrv = (\packval,
\normalrv) \sim \statprob_\packval$ contains an independent $\normalrv \sim
\normal(0, \stddev^2)$. Fix $\delta \in \openleft{0}{\radius d^{-1/q}}$,
and define $\lipobj_d = \lipobj d^{-1/p}$.
Then for
$\statval = (\packval, \normalrv)$, we define
\begin{equation*}
  F(\optvar; \statval)
  = \lipobj_d \lone{\optvar - \delta \packval} + \normalrv,
  ~~~ \mbox{so} ~~~
  f_\packval(\optvar) = \lipobj_d \lone{\optvar - \delta\packval}
  ~~~ \mbox{and} ~~~
  F(\optvar; (\packval, \normalrv)) = f_\packval(\optvar) + \normalrv.
\end{equation*}
Consequently, we have $\delta \packval = \optvar^\packval \defeq
\argmin_{\optvar \in \optdomain} f_\packval(\optvar)$, as
$\norm{\delta \packval}_q \le \radius d^{-1/q} \norm{\packval}_q
= \radius$, and
the variance bound
$\E[(F(\optvar; \statrv) - f(\optvar))^2] \le \stddev^2$ is evident.
Moreover, we have
\begin{equation*}
  \norm{\partial F(\optvar; \statval)}_p
  = \lipobj_d \norm{\sign(\optvar - \delta \packval)}_p
  \le \lipobj d^{-1/p} d^{1/p} = \lipobj,
\end{equation*}
so the functions belong to $\fnclass_{\stddev,\lipobj,p}$.  By inspection,
we have the separation
\begin{equation*}
  f_\packval(\optvar) - f_\packval(\optvar^\packval)
  \ge \delta \lipobj_d \sum_{j=1}^d \indic{\sign(\optvar_j) \neq \packval_j},
\end{equation*}
which is analogous to inequality~\eqref{eqn:v-gap-lower-bound}.

Abusing notation and defining $\statprob_\packval$ to be the distribution
of the $\totaliter$ observations $F(\optvar[\iter]; \statrv[\iter])$
available to the method, our earlier extension~\eqref{eqn:cs-lower-bound}
of Assouad's method implies
\begin{equation}
  \label{eqn:assouad-single-evaluation}
  \mxerrone_\totaliter(\fnclass_\stddev, \optdomain)
  \ge \frac{\delta \lipobj_d}{2}
  \sum_{j=1}^d \left(1 - \tvnorm{\statprob_{+j} - \statprob_{-j}}\right)
  \ge \frac{d \delta \lipobj_d}{2}
  \Bigg(1 - \bigg(\frac{1}{2d} \sum_{j=1}^d
  \dkl{\statprob_{+j}}{\statprob_{-j}}\bigg)^\half\Bigg),
\end{equation}
where $\statprob_{+j} = 2^{1-d} \sum_{\packval : \packval_j = 1}
\statprob_\packval$, and similarly for $-j$.
Now, note that for
any $\packval, \altpackval \in \packset$ such that
$\lone{\packval - \altpackval} \le 2$, we have
the inequality
\begin{equation*}
  \sup_{\optvar} |f_\packval(\optvar) - f_\altpackval(\optvar)|
  \le \lipobj_d \lone{\delta \packval - \delta \altpackval}
  \le 2 \delta \lipobj_d
\end{equation*}
(compare with Lemma 10 of \citet{Shamir13}).  In particular, this uniform
inequality implies that for distributions $\statprob_\packval$ and
$\statprob_\altpackval$, the observations $F(\optvar; \statrv) =
f_\packval(\optvar) + \normalrv$ are normally distributed random
variables with (absolute) difference in means bounded by $\delta \lipobj_d
\lone{\packval - \altpackval}$ and variance $\stddev^2$.  Using that the KL
divergence is jointly convex in both its arguments, we have (by a completely
parallel argument to the proof of Lemma~\ref{lemma:coordinate-tv-bound} in
Appendix~\ref{sec:proof-coordinate-tv-bound}) that
\begin{align*}
  \dkl{\statprob_{+j}}{\statprob_{-j}}
  & \le \frac{1}{2^d}
  \sum_{\packval \in \packset}
  \dkl{\statprob_{\packval,+j}}{\statprob_{\packval,-j}} \\
  & \le \frac{\totaliter}{2^d}
  \sum_{\packval \in \packset}
  \dkl{\normal(\delta \lipobj_d, \stddev^2)}{
    \normal(-\delta \lipobj_d, \stddev^2)}
  = \frac{\totaliter}{2^d}
  \sum_{\packval \in \packset}
  \frac{1}{2 \stddev^2} 4 \lipobj_d^2 \delta^2
  = \frac{2 \totaliter \lipobj_d^2 \delta^2}{\stddev^2}.
\end{align*}
%% For the second inequality we used that the KL divergence between Gaussians
%% with variance $\stddev^2$ and whose means differ by at most $\mu$ is
%% bounded by $\mu^2 / 2 \stddev^2$, as the observations are Gaussian.
%%
Substituting the KL divergence bound in the preceding display into our
inequality~\eqref{eqn:assouad-single-evaluation}, we find
\begin{equation*}
  \mxerrone_\totaliter(\fnclass_\stddev, \optdomain)
  \ge \frac{d \delta \lipobj_d}{2}
  \left(1 - \sqrt{
    \totaliter \frac{\lipobj_d^2 \delta^2}{\stddev^2}}
  \right)
  = \frac{d \delta \lipobj_d}{2}
  \left(1 - \delta \frac{\sqrt{k} \lipobj_d}{\stddev}\right).
\end{equation*}
Choosing $\delta
= \min\{\radius d^{-1/q}, \stddev / 2 \lipobj_d \sqrt{\totaliter}\}$
and substituting $\lipobj_d = \lipobj d^{-1/p}$
gives the proposition.

% -*- Mode: latex; -*-

% Local Variables:
% TeX-master: "zero-order"
% End:

\section{Discussion}

We have analyzed algorithms for optimization problems that use only random
function values---as opposed to gradient computations---to minimize an
objective function.  The algorithms we present are optimal: their convergence
rates cannot be improved (in a minimax sense) by more than numerical constant
factors. In addition to showing the optimality of several algorithms for
smooth convex optimization without gradient information, we have also shown
that the non-smooth case is no more difficult from an iteration complexity
standpoint, though it requires more carefully constructed randomization
schemes.  As a consequence of our results, we have additionally
 attained sharp rates
for bandit online convex optimization problems with multi-point feedback.  We
have also shown the necessary transition in convergence rates between
gradient-based algorithms and those that compute only function values: when
(sub)gradient information is available, attaining $\epsilon$-accurate solution
to an optimization problem requires $\order(1 / \epsilon^2)$ gradient
observations, while at least $\Omega(d / \epsilon^2)$ observations---but no
more---are necessary using paired function evaluations, and at 
least $\Omega(d^2 / \epsilon^2)$ are necessary using only a single
function evaluation. An
interesting open question is to further understand this
last setting: what is the optimal iteration complexity in this case?

%% This result highlights the advantages of using gradient
%% information when it is available, but we recall that there are many
%% applications in which gradients are not available.

%Finally, one question that this work leaves open, and which we are
%actively attempting to address, is whether our convergence rates extend to
%non-smooth optimization problems. We conjecture that they do, though
%it will be interesting to understand the differences between smooth and
%non-smooth problems when only zeroth-order feedback is available.

\paragraph{Acknowledgments}

This material supported in part by ONR MURI grant N00014-11-1-0688 and
U.S.\ Army Research Office under grant no.\ W911NF-11-1-0391, as well
as by NSF grant CIF-31712-23800.  JCD was also supported by an NDSEG
fellowship and a Facebook PhD fellowship.  We also thank the editor,
Nicol\'o Cesa-Bianchi, and two reviewers for multiple constructive
suggestions.

% Local Variables:
% TeX-master: "zero-order"
% End:

\appendix

\section{Technical results for convergence arguments}

In this appendix, we collect the proofs of the various lemmas used in our
convergence arguments.

%%%%%%%%%%%%%%%%%%%%%%%%%%%%%%%%%%%%%%%%%%%%%%%%%%%%%%%%%%%%%%%%%%%%%%%%%%%%%

\subsection{Proof of Lemma~\ref{lemma:vector-moments}}
\label{appendix:proof-vector-moments}

We consider each of the distributions in turn.
When $\perturbrv$ has $\normal(0, I_{d \times d})$ distribution,
standard $\chi^2$-distributed random variable calculations
imply
\begin{equation*}
  \E\left[\ltwo{\perturbrv}^k\right]
  = 2^\frac{k}{2} \frac{\Gamma(\frac{k}{2} + \frac{d}{2})}{
    \Gamma(\frac{d}{2})}.
\end{equation*}
That $\E[\perturbrv \perturbrv^\top] = I_{d \times d}$ is immediate, and the
constant values $c_k$ for $k \le 4$ follow from direct calculations.  For
samples $\perturbrv$ from the $\ell_2$-sphere, it is clear that
$\ltwo{\perturbrv} = \sqrt{d}$, so we may take $c_k = 1$ in the statement of
the lemma. When $\perturbrv \sim \uniform(\B^d)$, the density $p(t)$ of
$\ltwo{\perturbrv}$ is given by $\dim \cdot t^{\dim-1}$; consequently, for any
$k > -\dim$ we have
\begin{equation}
  \label{eqn:moment-ball}
  \E[\ltwo{\perturbrv}^k]
  = \int_0^1 t^k p(t) \, dt
  = \dim \int_0^1 t^{\dim+k-1} \, dt
  = \frac{\dim}{\dim+k}.
\end{equation}
Thus for $\perturbrv \sim
\uniform(\sqrt{\dim+2}\,\B^d)$ we have
$\E[\perturbrv \perturbrv^\top] = I_{\dim \times \dim}$,
and $\E[\ltwo{\perturbrv}^k] = (d + 2)^{k/2} d / (d + k)$.

%%%%%%%%%%%%%%%%%%%%%%%%%%%%%%%%%%%%%%%%%%%%%%%%%%%%%%%%%%%%%%%%%%%%%%%%%%%%%

\subsection{Proof of Lemma~\ref{lemma:moment-bound}}
\label{appendix:lipschitz-moment-bounds}

\newcommand{\difffunc}{\Delta} % Difference function
\newcommand{\event}{A_{\optvar}}

The proof of Lemma~\ref{lemma:moment-bound} is based on a sequence of
auxiliary results.  Since the Lipschitz continuity of $h$ implies the
result for $\dim = 1$ directly, we focus on the case $\dim \geq 2$.
First, we have the following standard result on the
dimension-independent concentration of rotationally symmetric
sub-Gaussian random vectors. We use this to prove that the perturbed
$h$ is close to the unperturbed $h$ with high probability.

\begin{lemma}[Rotationally invariant concentration]
  \label{lemma:rotation-concentration}
  Let $\perturbrv$ be a random variable in $\R^d$ having one of the following
  distributions: $\normal(0,I_{\dim \times \dim})$,
  $\uniform(\sqrt{\dim+2}\,\B^d)$, or
  $\uniform(\sqrt{\dim}\,\S^{\dim-1})$. There is a universal (numerical)
  constant $c > 0$ such that for any $\lipobj$-Lipschitz continuous function
  $h$,
  \begin{equation*}
    \P\left(|h(\perturbrv)
        - \E[h(\perturbrv)]|
        > \epsilon \right)
    \le 2 \exp\left(-\frac{c\, \epsilon^2}{\lipobj^2} \right).
  \end{equation*}
  In the case of the normal distribution, we may take $c = \half$.
\end{lemma}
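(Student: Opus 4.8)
\emph{Plan.} The plan is to derive all three cases from two classical isoperimetric facts — Gaussian concentration (the Borell--Sudakov--Tsirelson inequality) and Lévy's spherical concentration — after noticing that the normalizations by $\sqrt{\dim}$ (for the sphere) and $\sqrt{\dim+2}$ (for the ball) are chosen precisely so that the ambient dimension cancels against the Lipschitz constant of the rescaled function, leaving a dimension-free estimate. Throughout, I would take $c$ to be the minimum of the three universal constants obtained, so that a single $c$ works for all three distributions and all $\dim$.

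For $\perturbrv \sim \normal(0, I_{\dim\times\dim})$ there is nothing to rescale: the Gaussian isoperimetric inequality gives, for every $\lipobj$-Lipschitz $h$, the bound $\P(|h(\perturbrv) - \E[h(\perturbrv)]| > \epsilon) \le 2\exp(-\epsilon^2/(2\lipobj^2))$, which is exactly the claim with $c=\tfrac12$; I would simply cite this as classical. For $\perturbrv$ uniform on $\sqrt{\dim}\,\S^{\dim-1}$, I would write $\perturbrv = \sqrt{\dim}\,U$ with $U$ uniform on the unit sphere $\S^{\dim-1}\subset\R^{\dim}$ and set $g(u) \defeq h(\sqrt{\dim}\,u)$, which is $\lipobj\sqrt{\dim}$-Lipschitz on $\S^{\dim-1}$. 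Lévy's isoperimetric inequality then gives $\P(|g(U) - \mathrm{med}(g(U))| > s) \le 2\exp\!\big(-(\dim-1)s^2/(2\,\mathrm{Lip}(g)^2)\big) \le 2\exp\!\big(-s^2/(4\lipobj^2)\big)$ for $\dim\ge 2$ (using $(\dim-1)/\dim \ge \tfrac12$), the dimension having cancelled. A standard median-to-mean comparison, $|\E[g(U)] - \mathrm{med}(g(U))| \le \int_0^\infty \P(|g(U)-\mathrm{med}(g(U))|>s)\,ds \lesssim \lipobj$, together with the triangle inequality, converts this into a sub-Gaussian bound around the mean with a (slightly worse) universal constant.

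For $\perturbrv$ uniform on $\sqrt{\dim+2}\,\B^\dim$, the key observation is the classical identity that if $U$ is uniform on the unit sphere $\S^{\dim+1}\subset\R^{\dim+2}$, then its orthogonal projection onto the first $\dim$ coordinates is uniform on the unit ball $\B^\dim$ — the projected density is proportional to $(1-\ltwo{x}^2)^{(n-k-2)/2}$ with ambient dimension $n=\dim+2$ and $k=\dim$ retained coordinates, so the exponent vanishes and the law is uniform. Hence $\perturbrv$ is distributed as $\sqrt{\dim+2}\,P U$ with $P\colon\R^{\dim+2}\to\R^\dim$ the coordinate projection, and since $P$ is $1$-Lipschitz the function $g(u)\defeq h(\sqrt{\dim+2}\,P u)$ is $\lipobj\sqrt{\dim+2}$-Lipschitz on $\S^{\dim+1}$. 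Applying the spherical estimate of the previous paragraph in ambient dimension $\dim+2$ yields $\P(|h(\perturbrv)-\E[h(\perturbrv)]|>\epsilon)\le 2\exp\!\big(-(\dim+1)\epsilon^2/((\dim+2)\cdot 2\lipobj^2)\big)$, and again the dimension cancels up to a factor bounded below by a universal constant. The degenerate case $\dim=1$ (and the one-dimensional sphere/ball) is trivial: there $\perturbrv$ has bounded support, so $|h(\perturbrv)-\E[h(\perturbrv)]|$ is bounded by a universal multiple of $\lipobj$ and the claimed inequality holds once $c$ is taken small enough.

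The only genuine bookkeeping is to keep the constant dimension-free: this reduces to checking that ratios such as $(\dim-1)/\dim$ and $(\dim+1)/(\dim+2)$ are bounded below uniformly for $\dim\ge 2$, and that the median-to-mean passage and the small-$\dim$ cases can all be absorbed into one small universal $c$. I expect the median-to-mean comparison to be the fiddliest point — one needs the sub-Gaussian rate to survive the shift $\epsilon\mapsto \epsilon - O(\lipobj)$ — but this is routine. An alternative that avoids the projection identity would be to invoke a dimension-free log-Sobolev (Bakry--Émery) estimate on the sphere and transfer it to the ball, but the projection identity is cleaner, since the uniform measure on the ball has a non-smooth (indicator) density and is not directly amenable to Bakry--Émery.
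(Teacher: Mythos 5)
Your argument is correct, and it uses exactly the ingredients the paper invokes: the paper's ``proof'' is a one-line citation to Ledoux (Propositions 1.10 and 2.9), namely Gaussian concentration and L\'evy's spherical isoperimetric inequality. You have simply spelled out the derivation that the citation delegates to the reader---in particular, the projection of $\uniform(\S^{\dim+1})$ onto $\dim$ coordinates to realize $\uniform(\B^\dim)$, the dimension-free cancellation coming from the $\sqrt{\dim}$, $\sqrt{\dim+2}$ scalings, and the routine median-to-mean passage---and each of these steps is sound.
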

\noindent
These results are standard (e.g., see Propositions~1.10 and 2.9 of
\citet{Ledoux01}). \\

Our next result shows that integrating out $\perturbrv_2$ leaves us
with a smoother deviation problem, at the expense of terms of order at
most $\smoothparam^k \log^{k/2}(\dim)$. To state the lemma, we define
the difference function $\difffunc_\smoothparam(\optvar) =
\E[h(\optvar + \smoothparam \perturbrv_2)] - h(\optvar)$. Note that
since $h$ is convex and $\E[\perturbrv_2] = 0$, Jensen's inequality
implies $\difffunc_\smoothparam(\optvar) \ge 0$.

\begin{lemma}
  \label{lemma:concentration-h}
  Under the conditions of Lemma~\ref{lemma:moment-bound}, we have
  \begin{equation*}
    \E\left[\left|h(\perturbrv_1 + \smoothparam \perturbrv_2) -
      h(\perturbrv_1)\right|^k\right]
    \le 2^{k-1} \E[\difffunc_\smoothparam(\perturbrv_1)^k]
      + c^{-\frac{k}{2}} 2^{k-1} k^{\frac{k}{2}}
        \smoothparam^k \log^{\frac{k}{2}} (\dim+2k)
      + \sqrt{2} \smoothparam^k
  \end{equation*}
  for any $k \ge 1$.  Here $c$ is the same constant in
  Lemma~\ref{lemma:rotation-concentration}.
\end{lemma}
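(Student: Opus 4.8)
The plan is to decompose $h(\perturbrv_1 + \smoothparam\perturbrv_2) - h(\perturbrv_1)$ into a nonnegative ``drift'' term and a mean-zero ``fluctuation'' term, leave the drift untouched (this is the smoother problem referenced in the text), and control the fluctuation with the dimension-free concentration of Lemma~\ref{lemma:rotation-concentration}. Conditionally on $\perturbrv_1$, write
\[
  h(\perturbrv_1 + \smoothparam\perturbrv_2) - h(\perturbrv_1)
  = \difffunc_\smoothparam(\perturbrv_1) + Y,
  \qquad
  Y \defeq h(\perturbrv_1 + \smoothparam\perturbrv_2)
    - \E\big[h(\perturbrv_1 + \smoothparam\perturbrv_2) \mid \perturbrv_1\big],
\]
so $\E[Y \mid \perturbrv_1] = 0$ and $\difffunc_\smoothparam(\perturbrv_1)\ge 0$ by Jensen's inequality. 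Since $\difffunc_\smoothparam(\perturbrv_1)\ge 0$, we have $|\difffunc_\smoothparam(\perturbrv_1)+Y| \le \difffunc_\smoothparam(\perturbrv_1) + |Y|$, and the elementary inequality $(a+b)^k \le 2^{k-1}(a^k+b^k)$ for $a,b\ge 0$ gives
\[
  \E\big[|h(\perturbrv_1 + \smoothparam\perturbrv_2) - h(\perturbrv_1)|^k\big]
  \le 2^{k-1}\,\E\big[\difffunc_\smoothparam(\perturbrv_1)^k\big]
    + 2^{k-1}\,\E\big[|Y|^k\big],
\]
so it remains to show that $2^{k-1}\E[|Y|^k]$ is bounded by the last two terms of the claimed inequality.

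To bound $\E[|Y|^k]$ I condition on $\perturbrv_1 = \perturbval_1$ and apply Lemma~\ref{lemma:rotation-concentration} to the map $\perturbval_2 \mapsto h(\perturbval_1 + \smoothparam\perturbval_2)$, which is $\smoothparam$-Lipschitz because $h$ is $1$-Lipschitz; this yields $\P(|Y| > \epsilon \mid \perturbrv_1) \le 2\exp(-c\,\epsilon^2/\smoothparam^2)$ for all $\epsilon \ge 0$, with the same constant $c$, uniformly in $\perturbval_1$. I then estimate the conditional $k$-th moment by truncating at a level $t_0 \ge 0$ to be chosen: by Cauchy--Schwarz,
\[
  \E\big[|Y|^k \mid \perturbrv_1\big]
  \le t_0^k
    + \big(\E[|Y|^{2k} \mid \perturbrv_1]\big)^{1/2}\,
      \big(\P(|Y| > t_0 \mid \perturbrv_1)\big)^{1/2}
  \le t_0^k
    + \sqrt{2}\,\big(\E[|Y|^{2k} \mid \perturbrv_1]\big)^{1/2}
      \exp\!\Big(-\frac{c\,t_0^2}{2\smoothparam^2}\Big).
\]
A crude Lipschitz bound gives $|Y| \le \smoothparam\ltwo{\perturbrv_2} + \smoothparam\,\E\ltwo{\perturbrv_2}$, so Lemma~\ref{lemma:vector-moments} (together with the $\chi^2$ moment identity $\E\ltwo{\perturbrv_2}^{2k} = \prod_{i=0}^{k-1}(\dim+2i) \le (\dim+2k)^k$ in the Gaussian case, and $\ltwo{\perturbrv_2}^2 \le \dim+2$ in the ball and sphere cases) shows $\E[|Y|^{2k}\mid\perturbrv_1] \le C_k\,\smoothparam^{2k}(\dim+2k)^k$ for a numerical constant $C_k$. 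Choosing $t_0^2$ of order $(\smoothparam^2 k/c)\log(\dim+2k)$ makes the exponential factor cancel the $(\dim+2k)^{k/2}$, leaving the tail term at most $2^{1-k}\sqrt{2}\,\smoothparam^k$ and $t_0^k$ at most $c^{-k/2}k^{k/2}\smoothparam^k\log^{k/2}(\dim+2k)$. Taking the expectation over $\perturbrv_1$, multiplying by $2^{k-1}$, and inserting into the previous display yields the lemma.

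I expect the main obstacle to be purely quantitative: arranging the truncation level and the crude second-moment estimate so that the constants come out exactly as stated --- in particular producing the clean argument $\dim+2k$ inside the logarithm (with no spurious multiplicative constant, which requires care in bounding $\E\ltwo{\perturbrv_2}^{2k}$) and the coefficient $c^{-k/2}2^{k-1}k^{k/2}$. The dependence on $k$ through $k^{k/2}$ and on $c$ through $c^{-k/2}$ then emerges from the chosen truncation level automatically. Handling the three admissible smoothing pairs of Assumption~\ref{assumption:smoothing-dist-general} uniformly is not a difficulty, since Lemmas~\ref{lemma:rotation-concentration} and~\ref{lemma:vector-moments} already cover all three with universal constants.
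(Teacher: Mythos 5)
Your plan shares the same ingredients as the paper's proof---the split into drift $\difffunc_\smoothparam(\perturbrv_1)$ plus a conditionally centered fluctuation $Y$, the concentration of Lemma~\ref{lemma:rotation-concentration}, and a truncation at a level of order $\smoothparam\sqrt{(k/c)\log(\dim+2k)}$---but the \emph{order of operations} is wrong, and this is not a cosmetic issue. Because you apply $(a+b)^k \le 2^{k-1}(a^k+b^k)$ \emph{before} truncating, every piece of the $\E[|Y|^k]$ bound inherits the $2^{k-1}$ prefactor. In particular, with $t_0^2 = (\smoothparam^2 k/c)\log(\dim+2k)$, the Cauchy--Schwarz tail term becomes $2^{k-1}\sqrt{2}\,(\E[|Y|^{2k}])^{1/2}(\dim+2k)^{-k/2}$. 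Even if you had the clean bound $\E[|Y|^{2k}]\le \smoothparam^{2k}(\dim+2k)^k$ this would be $2^{k-1}\sqrt{2}\smoothparam^k$, a factor $2^{k-1}$ larger than the claimed $\sqrt{2}\smoothparam^k$; but you do not even have that, since bounding $|Y|$ forces the crude estimate $|Y|\le \smoothparam\ltwo{\perturbrv_2}+\smoothparam\E\ltwo{\perturbrv_2}$ (the conditional mean cannot be dropped), contributing another $2^{k}$-scale constant $C_k$. You cannot repair this by enlarging $t_0$: any choice that kills the extra $2^{k-1}C_k^{1/2}$ through the exponential inflates $t_0^k$ by either a multiplicative factor on the $\log^{k/2}(\dim+2k)$ term or an additive $\smoothparam^k$ term, so the lemma no longer has the stated form.

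The paper avoids this by partitioning the expectation \emph{before} invoking the elementary inequality. Define $\event(\epsilon)=\{|Y|\le\epsilon\}$. On $\event(\epsilon)$ the elementary inequality is cheap: $|h(\perturbrv_1+\smoothparam\perturbrv_2)-h(\perturbrv_1)|^k=|\difffunc_\smoothparam+Y|^k\le 2^{k-1}\difffunc_\smoothparam^k+2^{k-1}\epsilon^k$, and the probability of the event is simply dropped. On $\event(\epsilon)^c$, one does not touch $Y$ at all: instead, the \emph{raw} difference is bounded by the Lipschitz estimate $|h(\perturbrv_1+\smoothparam\perturbrv_2)-h(\perturbrv_1)|\le\smoothparam\ltwo{\perturbrv_2}$, and then a single Cauchy--Schwarz against $\P(\event^c)^{1/2}$ together with $\E[\ltwo{\perturbrv_2}^{2k}]\le(\dim+2k)^k$ gives exactly $\sqrt{2}\smoothparam^k(\dim+2k)^{k/2}\exp(-c\epsilon^2/(2\smoothparam^2))$, with no $2^{k-1}$ prefactor and no extra $\E\ltwo{\perturbrv_2}$ term. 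Plugging in $\epsilon^2=(\smoothparam^2 k/c)\log(\dim+2k)$ then recovers $\sqrt{2}\smoothparam^k$ on the nose. This event-first split is the missing idea in your write-up, and it is what makes the constants come out as claimed rather than ``automatically'' as you expected.
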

\begin{proof}
  \renewcommand{\complement}{c} For each $\optvar \in \optdomain$, the
  function $\altoptvar \mapsto h(\optvar + \smoothparam \altoptvar)$
  is $\smoothparam$-Lipschitz, so that
  Lemma~\ref{lemma:rotation-concentration} implies
  that
\begin{equation*} 
\P \left( \big|h(\optvar + \smoothparam \perturbrv_2) - \E[h(\optvar +
  \smoothparam \perturbrv_2)] \big| > \epsilon \right) \leq 2\exp
\left(-\frac{c\,\epsilon^2}{\smoothparam^2}\right).
  \end{equation*}
On the event $\event(\epsilon) \defeq \{|h(\optvar + \smoothparam
\perturbrv_2) - \E[h(\optvar + \smoothparam \perturbrv_2)]| \le
\epsilon\}$, we have
  \begin{equation*}
    \left|h(\optvar + \smoothparam \perturbrv_2)-h(\optvar)\right|^k
    \le 2^{k-1}\left|h(\optvar + \smoothparam \perturbrv_2)
        - \E[h(\optvar + \smoothparam \perturbrv_2)\right|^k
        + 2^{k-1} \difffunc_\smoothparam(\optvar)^k
    \le 2^{k-1} \epsilon^k + 2^{k-1} \difffunc_\smoothparam(\optvar)^k,
  \end{equation*}
  which implies
  \begin{subequations}
    \begin{equation}
      \E\left[\left|h(\optvar + \smoothparam \perturbrv_2)-h(\optvar)\right|^k
        \cdot \indic{\event(\epsilon)}\right]
      \le 2^{k-1} \difffunc_\smoothparam(\optvar)^k + 2^{k-1} \epsilon^k.
      \label{eqn:bound-event}
    \end{equation}
    On the complement $\event^\complement(\epsilon)$, which occurs with
    probability at most $2\exp(-c\epsilon^2/\smoothparam^2)$, we use the
    Lipschitz continuity of $h$ and Cauchy-Schwarz inequality to obtain
    \begin{equation*}
      \E\left[\left|h(\optvar + \smoothparam \perturbrv_2)-h(\optvar)\right|^k
        \cdot \indic{\event(\epsilon)^\complement}\right]
      \le \E\left[\smoothparam^k\ltwo{\perturbrv_2}^k \cdot
        \indic{\event(\epsilon)^\complement} \right]
      \le \smoothparam^k \E[\ltwo{\perturbrv_2}^{2k}]^{\half} \cdot
      \P\left(\event(\epsilon)^\complement\right)^{\half}.
    \end{equation*}
    By direct calculations, Assumption~\ref{assumption:smoothing-dist-general}
    implies that $\E[\ltwo{\perturbrv_2}^{2k}] \le (d + 2k)^k$.
    %
    %%     For $\perturbrv_2 \sim \normal(0,I_{\dim \times \dim})$,
    %%     $\ltwo{\perturbrv_2}^2$ has $\chi^2$-distribution with $\dim$
    %%     degrees of freedom and moment bound $\E[\ltwo{\perturbrv_2}^{2k}] =
    %%     \dim(\dim+2)\cdots(\dim+2k-2) \le (\dim+2k)^k$. On the other hand,
    %%     for $\perturbrv_2 \sim \uniform(\sqrt{\dim+2}\,\B^\dim)$ and
    %%     $\perturbrv_2 \sim \uniform(\sqrt{\dim}\,\S^{\dim-1})$ we have
    %%     $\ltwo{\perturbrv_2}^{2k} \le (\dim+2)^k \le (\dim+2k)^k$.
    %
    Thus,
    \begin{equation}
      \E\left[\left|h(\optvar + \smoothparam \perturbrv_2)-h(\optvar)\right|^k
        \cdot \indic{\event(\epsilon)^\complement}\right]
      \le \smoothparam^k (\dim+2k)^{\frac{k}{2}} \cdot
      \sqrt{2} \exp\left(-\frac{c\,\epsilon^2}{2\smoothparam^2}\right).
      \label{eqn:bound-event-complement}
    \end{equation}
  \end{subequations}
  Combining the estimates~\eqref{eqn:bound-event}
  and~\eqref{eqn:bound-event-complement} gives
  \begin{equation*}
    \E\left[\left|h(\optvar + \smoothparam \perturbrv_2)-h(\optvar)\right|^k
      \right]
    \le 2^{k-1} \difffunc_\smoothparam(\optvar)^k
      + 2^{k-1} \epsilon^k
      + \sqrt{2} \smoothparam^k (\dim+2k)^{\frac{k}{2}}
        \exp\left(-\frac{c\,\epsilon^2}{2\smoothparam^2}\right).
  \end{equation*}
  Setting $\epsilon^2 = \frac{k}{c} \smoothparam^2 \log(\dim+2k)$ and
  taking expectations over $\perturbrv_1 \sim \smoothingdist_1$
  gives Lemma~\ref{lemma:concentration-h}.
\end{proof}

By Lemma~\ref{lemma:concentration-h}, it suffices to control the bias
$\E[\difffunc_\smoothparam(\perturbrv_1)] = \E[h(\perturbrv_1 + \smoothparam
  \perturbrv_2) - h(\perturbrv_1)]$. The following result allows us to
reduce this problem to one of bounding a certain one-dimensional expectation.

\begin{lemma}
  \label{lemma:linear-to-absolute-value}
  Let $\perturbrv$ and $\altperturbrv$ be random variables in $\R^\dim$
  with rotationally invariant distributions and finite first moments.
  Let $\mc{H}$ denote the set of $1$-Lipschitz convex functions
  $h \colon \R^\dim \to \R$, and for $h \in \mc{H}$, define
  $V(h) = \E[h(\altperturbrv) - h(\perturbrv)]$. Then
  \begin{equation*}
     \sup_{h \in \mc{H}} V(h) = \sup_{a \in \R_+}
      \E\left[|\ltwo{\altperturbrv} - a| - |\ltwo{\perturbrv} - a|\right].
  \end{equation*}
\end{lemma}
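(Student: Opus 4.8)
The plan is to peel the supremum over $\mc H$ down to a one-parameter family by successively simplifying the competing function $h$, using only that $V(h)=\E[h(\altperturbrv)]-\E[h(\perturbrv)]$ depends on $h$ solely through the (rotationally invariant) marginal laws of $\perturbrv$ and $\altperturbrv$. \textbf{Symmetrization:} for $h\in\mc H$ and a Haar-random orthogonal matrix $U$, the average $\bar h(x)=\E_U[h(Ux)]$ is again convex and $1$-Lipschitz, hence lies in $\mc H$, and is rotationally invariant; since the laws of $\perturbrv$ and $\altperturbrv$ are rotationally invariant we have $\E[h(U\perturbrv)]=\E[h(\perturbrv)]$ and likewise for $\altperturbrv$ for every fixed $U$, so $V(\bar h)=V(h)$ and the supremum is unchanged if we restrict to rotationally invariant $h$. \textbf{Radial reduction:} a rotationally invariant convex $h$ has the form $h(x)=\phi(\ltwo x)$, and comparing the values of $h$ along a line through the origin forces $\phi\colon\R_+\to\R$ to be convex and nondecreasing, with $h$ being $1$-Lipschitz iff $\phi$ is; writing $R=\ltwo{\perturbrv}$ and $S=\ltwo{\altperturbrv}$ (nonnegative, with finite means), we are left with $\sup_{h\in\mc H}V(h)=\sup\{\E[\phi(S)-\phi(R)]:\phi\ \text{convex, nondecreasing, }1\text{-Lipschitz on }\R_+\}$.

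\textbf{Hinge representation.} Normalizing $\phi(0)=0$ (which does not affect $V$), every admissible $\phi$ can be written as $\phi(t)=\beta t+\int_0^\infty (t-a)_+\,d\mu(a)$ with $\beta\ge 0$ and $\mu\ge 0$ a nonnegative measure satisfying $\beta+\mu(\R_+)=\phi'(\infty)\le 1$ (here $\mu=\phi''$ as a measure). Taking expectations and using $\E[S-R]=\E[(S-0)_+-(R-0)_+]$, the functional $\E[\phi(S)-\phi(R)]$ becomes a nonnegative combination, with total weight at most $1$, of the numbers $G(a):=\E[(S-a)_+-(R-a)_+]$, $a\ge 0$; since $G(a)\to 0$ as $a\to\infty$ (dominated convergence, using $\E S,\E R<\infty$), the supremum over $a$ is nonnegative, and we conclude $\sup_{h\in\mc H}V(h)=\sup_{a\ge 0}G(a)$, the supremum being realized along the admissible hinge profiles $t\mapsto(t-a)_+$.

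\textbf{Passage to absolute values, and the main obstacle.} It remains to identify $\sup_{a\ge 0}G(a)$ with $\sup_{a\ge 0}\E[|\ltwo{\altperturbrv}-a|-|\ltwo{\perturbrv}-a|]$. The elementary identity $2(t-a)_+=|t-a|+(t-a)$ gives $G(a)=\tfrac12\E[|S-a|-|R-a|]+\tfrac12\E[S-R]$, and at $a=0$ the hinge $t\mapsto(t-0)_+=t$ is admissible and realizes simultaneously $G(0)=\E[S-R]=\E[|S-0|-|R-0|]$; the claim is that this affine correction term is exactly absorbed when one passes to the supremum, so that replacing positive parts by absolute values leaves the supremum unchanged. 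Carrying out this last bookkeeping carefully — rather than merely comparing the two quantities — is the delicate point, and is the step I expect to require the most work; by contrast the symmetrization and hinge representation in the earlier steps are routine.
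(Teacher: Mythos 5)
Your symmetrization, radial reduction, and hinge decomposition are all correct, and they yield the clean identity $\sup_{h \in \mc{H}} V(h) = \sup_{a \ge 0} G(a)$ with $G(a) = \E[(S-a)_+ - (R-a)_+]$, where $S = \ltwo{\altperturbrv}$ and $R = \ltwo{\perturbrv}$; your insistence that the radial profile be \emph{nondecreasing} (so that it lifts back to a convex function on $\R^\dim$) is in fact more careful than the paper's own argument, which silently drops that constraint. The problem is the step you defer. Writing $H(a) = \E[|S-a| - |R-a|]$, your identity $G(a) = \tfrac{1}{2} H(a) + \tfrac{1}{2}\E[S-R]$, together with $H(0) = \E[S-R]$, shows that $\sup_a G(a) = \sup_a H(a)$ holds if and only if $\sup_{a \ge 0} H(a) = H(0)$, i.e.\ iff $H$ is maximized at $a=0$. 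That is false under the lemma's hypotheses alone: take $\altperturbrv \equiv 0$ and $\perturbrv \sim \uniform(\S^{\dim-1})$. Then $V(h) = h(0) - \E[h(\perturbrv)] \le 0$ by Jensen's inequality, so the left-hand side of the lemma is $0$ (consistent with your $\sup_a G(a) = \sup_{a}\,(-(1-a)_+) = 0$), while $H(a) = a - |1-a| = 1$ for all $a \ge 1$, so the right-hand side is $1$. So the ``bookkeeping'' you postpone is not merely delicate --- it cannot be done, and the stated equality is only true as the inequality $\sup_{h \in \mc{H}} V(h) \le \sup_{a \in \R_+} \E[|\ltwo{\altperturbrv} - a| - |\ltwo{\perturbrv} - a|]$.

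Fortunately that one-sided bound is all the paper ever uses: in the proof of Lemma~\ref{lemma:moment-bound} it is combined with Lemma~\ref{lemma:supremum-achieved-zero}, which establishes precisely that $a \mapsto H(a)$ is maximized at $a = 0$ for the particular smoothing distributions of Assumption~\ref{assumption:smoothing-dist-general}, via a stochastic-dominance argument specific to those laws --- exactly the ingredient your general argument cannot supply. Your decomposition gives the needed direction in one line, $\sup_a G(a) = \tfrac{1}{2}\sup_a H(a) + \tfrac{1}{2} H(0) \le \sup_a H(a)$, and I would finish the proof that way, stating the result as an inequality. For comparison, the paper reaches the same one-sided conclusion by a different extremal-point route: it enlarges the radial class to \emph{all} $1$-Lipschitz convex functions on $\R$ and identifies the extreme points of the derivative class as $\pm 1$ step functions, whose antiderivatives are $t \mapsto |t-a| - |a|$; that enlargement is likewise only an upper bound, for the same reason (decreasing radial profiles such as $t \mapsto |t-a|$ near $0$ do not lift to convex functions on $\R^\dim$). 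So your route is sound, and arguably sharper, up to the last step --- but the affine correction is not ``absorbed,'' and equality genuinely requires extra hypotheses on the two distributions.
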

\begin{proof}
  First, we note that $V(h) = V(h \circ U)$ for any
  unitary transformation $U$; since $V$ is linear,
  if we define $\hat{h}$ as the average of $h \circ U$ over all unitary
  $U$ then $V(h) = V(\hat{h})$.
  Moreover, for $h \in \mc{H}$, we have $\hat{h}(\optvar)
  = \hat{h}_1(\ltwo{\optvar})$ for some $\hat{h}_1 : \R_+ \to \R$, which is
  necessarily $1$-Lipschitz and convex.

  Letting $\mc{H}_1$ denote the $1$-Lipschitz convex $h : \R \to \R$
  satisfying $h(0) = 0$, we thus have $\sup_{h \in \mc{H}} V(h) =
  \sup_{h \in \mc{H}_1} \E[h(\ltwo{\altperturbrv}) -
    h(\ltwo{\perturbrv})]$.  Now, we define $\mc{G}_1$ to be the set
  of measurable non-decreasing functions bounded in $[-1, 1]$. Then by
  known properties of convex functions~\cite{HiriartUrrutyLe96ab}, for
  any $h \in \mc{H}_1$, we can write $h(t) = \int_0^t g(s) ds$ for
  some $g \in \mc{G}_1$.  Using this representation, we have
\begin{align}
\sup_{h \in \mc{H}} V(h) & = \sup_{h \in \mc{H}_1}
\left\{\E[h(\ltwo{\altperturbrv}) - h(\ltwo{\perturbrv})]\right\}
\nonumber \\ 
& = \sup_{g \in \mc{G}_1} \left\{\E[h(\ltwo{\altperturbrv}) -
  h(\ltwo{\perturbrv})], ~ \mbox{where} ~ h(t) = \int_0^t g(s) ds
\right \}.
\label{eqn:derivative-suprema}
\end{align}

  Let $g_a$ denote the $\{-1, 1\}$-valued function with step at $a$,
  that is, $g_a(t) = -\indic{t \le a} + \indic{t > a}$. We define
  $\mc{G}_1^{(n)}$ to be the set of non-decreasing step functions
  bounded in $[-1, 1]$ with at most $n$ steps, that is, functions of
  the form $g(t) = \sum_{i = 1}^n b_i g_{a_i}(t)$, where $|g(t)| \le
  1$ for all $t \in \R$. We may then further simplify the
  expression~\eqref{eqn:derivative-suprema} by replacing $\mc{G}_1$
  with $\mc{G}_1^{(n)}$, that is,
  \begin{equation*}
    \sup_{h \in \mc{H}} V(h)
    = \sup_{n \in \N}
    \sup_{g \in \mc{G}_1^{(n)}}
    \left\{\E[h(\ltwo{\altperturbrv}) - h(\ltwo{\perturbrv})],
    ~ \mbox{where} ~ h(t) = \int_0^t g(s) ds\right\}.
  \end{equation*}
  The extremal points of $\mc{G}_1^{(n)}$ are the step functions $\{g_a \mid a
  \in \R\}$, and since the supremum~\eqref{eqn:derivative-suprema} is linear
  in $g$, it may be taken over such $g_a$.
  Lemma~\ref{lemma:linear-to-absolute-value} then follows by noting the
  integral equality
  $\int_0^t g_a(s) ds = |t - a| - |a|$.  The restriction to $a \ge 0$
  in the lemma follows
  since $\ltwo{v} \ge 0$ for all $v \in \R^\dim$.
\end{proof}

By Lemma~\ref{lemma:linear-to-absolute-value}, for any $1$-Lipschitz $h$, the
associated difference function has expectation bounded as
\begin{equation*}
  \E[\difffunc_\smoothparam(\perturbrv_1)]
  = \E[h(\perturbrv_1 + \smoothparam \perturbrv_2) - h(\perturbrv_1)]
  \le \sup_{a \in \R_+} \E\left[\left|
    \ltwo{\perturbrv_1 + \smoothparam \perturbrv_2}
    - a\right|
    - \left|\ltwo{\perturbrv_1} - a\right|\right].
\end{equation*}
For the distributions identified by
Assumption~\ref{assumption:smoothing-dist-general}, we can in fact show that
the preceding supremum is attained at $a = 0$.

\begin{lemma}
  \label{lemma:supremum-achieved-zero}
  Let $\perturbrv_1 \sim \smoothingdist_1$ and
  $\perturbrv_2 \sim \smoothingdist_2$ be independent, where 
  $\smoothingdist_1$ and $\smoothingdist_2$ satisfy
  Assumption~\ref{assumption:smoothing-dist-general}. For any
  $\smoothparam \ge 0$, the function
  \begin{equation*}
    a \mapsto \zeta(a) \defeq
    \E\left[|\ltwo{\perturbrv_1 + \smoothparam \perturbrv_2} - a|
     - |\ltwo{\perturbrv_1} - a|\right]
  \end{equation*}
  is non-increasing in $a \ge 0$.
\end{lemma}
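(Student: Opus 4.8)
The plan is to prove that $\zeta$ is non-increasing on $\R_+$ by establishing a first-order stochastic dominance relation between the norms $R \defeq \ltwo{\perturbrv_1}$ and $R' \defeq \ltwo{\perturbrv_1 + \smoothparam \perturbrv_2}$, namely that $\P(R' \le t) \le \P(R \le t)$ for all $t \ge 0$ (the case $\smoothparam = 0$ being trivial, as then $\zeta \equiv 0$). Granting this, the lemma follows immediately: fix $0 \le a \le a'$ and set $\psi(y) \defeq |y - a'| - |y - a|$. This function is piecewise linear with slopes $0$, $-2$, and $0$ on $(-\infty, a)$, $(a, a')$, and $(a', \infty)$ respectively, hence bounded and non-increasing on $\R$. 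Since $\zeta(a') - \zeta(a) = \E[\psi(R')] - \E[\psi(R)]$ and $-\psi$ is a bounded non-decreasing function, the stochastic dominance of $R'$ over $R$ forces $\E[\psi(R')] \le \E[\psi(R)]$, i.e. $\zeta(a') \le \zeta(a)$.

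To prove the dominance, I would first condition on $\perturbrv_2$. Because $\perturbrv_1$ and $\perturbrv_2$ are independent, it is enough to show that for every fixed vector $w \in \R^\dim$,
\begin{equation*}
  \P\left(\ltwo{\perturbrv_1 + w} \le t\right) \;=\; \P\left(\perturbrv_1 \in \B^\dim(-w, t)\right)
  \;\le\; \P\left(\perturbrv_1 \in \B^\dim(0, t)\right) \;=\; \P\left(\ltwo{\perturbrv_1} \le t\right)
  \quad\text{for all } t \ge 0,
\end{equation*}
and then substitute $w = \smoothparam \perturbrv_2$ and integrate out $\perturbrv_2$. The crucial structural fact is that under Assumption~\ref{assumption:smoothing-dist-general} the smoothing distribution $\smoothingdist_1$ is always either standard normal or uniform on a Euclidean ball centered at the origin; in both cases $\perturbrv_1$ has a density $p$ on $\R^\dim$ whose super-level sets $\{p > s\}$ are Euclidean balls centered at the origin (or empty), so $p$ is symmetric-decreasing. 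Since $\B^\dim(-w, t)$ and $\B^\dim(0,t)$ have equal Lebesgue measure, the bathtub principle (a standard rearrangement inequality) gives $\int_{\B^\dim(-w,t)} p \le \int_{\B^\dim(0,t)} p$, which is exactly the displayed inequality. Integrating over $\perturbrv_2 \sim \smoothingdist_2$ then yields $\P(R' \le t) \le \P(R \le t)$ for all $t \ge 0$, completing the argument.

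The only non-bookkeeping point — and the step I expect to warrant the most care — is the observation that $\smoothingdist_1$ always has a radially symmetric, radially \emph{non-increasing} density under Assumption~\ref{assumption:smoothing-dist-general}, which is what makes the rearrangement inequality applicable. It is worth flagging that the conclusion genuinely fails if $\perturbrv_1$ is uniform on a sphere: adding a small fixed vector can then strictly decrease the probability that the norm falls below a radius slightly smaller than that of the sphere, so the monotonicity of $a \mapsto \zeta(a)$ would break. This is precisely why the assumption restricts $\smoothingdist_1$ to the Gaussian or ball cases, while still permitting $\smoothingdist_2$ to be the uniform law on a sphere (since $\smoothingdist_2$ only ever enters through the conditioning step, where no regularity on it is needed).
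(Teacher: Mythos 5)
Your argument is correct and reaches the result by a genuinely different route. Both reductions to stochastic dominance are valid: the paper differentiates $\zeta$ and reads off the CDF difference, while you replace that by the monotone-test-function computation $\zeta(a') - \zeta(a) = \E[\psi(R')] - \E[\psi(R)]$ with $\psi(y) = |y-a'| - |y-a|$ non-increasing and bounded, which is cleaner and sidesteps any worry about differentiating under the integral. The real divergence is in how stochastic dominance is established. The paper handles the three cases of Assumption~\ref{assumption:smoothing-dist-general} one at a time: for the Gaussian pair it uses the identity $\ltwo{\perturbrv_1 + \smoothparam \perturbrv_2} \eqd \sqrt{1+\smoothparam^2}\sqrt{T_\dim}$ (which exploits that a sum of independent Gaussians is Gaussian), and for the two ball/sphere cases it writes out the densities $p_0(t), p_\smoothparam(t)$ of the radii explicitly via convolution and checks $p_\smoothparam \le p_0$ pointwise on $[0,r]$. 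You instead condition on $\perturbrv_2 = w$, observe that in every admissible case $\smoothingdist_1$ has a radially symmetric, radially non-increasing density, and invoke the bathtub principle to get $\P(\perturbrv_1 \in \B^\dim(-w,t)) \le \P(\perturbrv_1 \in \B^\dim(0,t))$; integrating out $w$ finishes the job. This unifies all three cases into a single argument and also makes it transparent that no regularity on $\smoothingdist_2$ is needed (which explains why the assumption permits $\smoothingdist_2$ uniform on a sphere but never puts $\smoothingdist_1$ there). The trade-off is that the paper's proof is self-contained at the level of explicit one-dimensional calculus, whereas yours leans on a standard but slightly less elementary rearrangement inequality. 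Your remark that the lemma would fail with $\smoothingdist_1$ uniform on a sphere is also correct and is a useful sanity check on the structure of the assumption.
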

\noindent
We return to prove this lemma at the end of the section.

With the intermediate results above, we can complete our proof of
Lemma~\ref{lemma:moment-bound}.
In view of Lemma~\ref{lemma:concentration-h}, we only need to bound
$\E[\difffunc_\smoothparam(\perturbrv_1)^k]$, where
$\difffunc_\smoothparam(\optvar) = \E[h(\optvar+\smoothparam \perturbrv_2)]
- h(\optvar)$. Recall that $\difffunc_\smoothparam(\optvar) \geq 0$ since
$h$ is convex. Moreover, since $h$ is $1$-Lipschitz,
\begin{equation*}
  \difffunc_\smoothparam(\optvar)
  \le \E\left[\big|h(\optvar+\smoothparam \perturbrv_2)
    - h(\optvar)\big|\right]
  \le \E\big[\|\smoothparam \perturbrv_2\|_2\big]
  \le u\E\big[\|\perturbrv_2\|_2^2\big]^{1/2}
  = \smoothparam \sqrt{\dim},
\end{equation*}
where the last equality follows from the choices of $\perturbrv_2$ in
Assumption~\ref{assumption:smoothing-dist-general}.  Therefore, we have the
crude but useful bound
\begin{equation}
  \label{eq:bound-eta-to-k}
  \E[\difffunc_\smoothparam(\perturbrv_1)^k]
  \le \smoothparam^{k-1} \dim^{\frac{k-1}{2}}
  \: \E[\difffunc_\smoothparam(\perturbrv_1)]
  = \smoothparam^{k-1} \dim^{\frac{k-1}{2}}\,
  \E[h(\perturbrv_1+\smoothparam \perturbrv_2)-h(\perturbrv_1)],
\end{equation}
where the last expectation is over both $\perturbrv_1$ and
$\perturbrv_2$. Since $\perturbrv_1$ and $\perturbrv_2$ both have rotationally
invariant distributions, Lemmas~\ref{lemma:linear-to-absolute-value}
and~\ref{lemma:supremum-achieved-zero} imply that the
expectation in expression~\eqref{eq:bound-eta-to-k}
is bounded by
\begin{equation*}
  \E[h(\perturbrv_1+\smoothparam \perturbrv_2)-h(\perturbrv_1)]
  \le \E\left[\ltwo{\perturbrv_1 + \smoothparam \perturbrv_2}
    - \ltwo{\perturbrv_1}\right].
\end{equation*}
Lemma~\ref{lemma:moment-bound} then follows by bounding the norm
difference in the preceding display for each choice of the smoothing
distributions in Assumption~\ref{assumption:smoothing-dist-general}.
We claim that
\begin{equation}
  \label{eqn:bound-expectation-diff}
  \E\left[\ltwo{\perturbrv_1 + \smoothparam \perturbrv_2}
    - \ltwo{\perturbrv_1}\right]
  \le \frac{1}{\sqrt{2}} \smoothparam^2 \sqrt{d}.
\end{equation}
To see this inequality, we consider the possible distributions for the pair
$\perturbrv_1, \perturbrv_2$ under
Assumption~\ref{assumption:smoothing-dist-general}.

\begin{enumerate}
\item  Let $T_\dim$ have $\chi^2$-distribution with
  $\dim$ degrees of freedom. Then
  for $\perturbrv_1, \perturbrv_2$ independent and
  $\normal(0, I_{d \times d})$-distributed,
  we have the distributional identities
  $\ltwo{\perturbrv_1 + \smoothparam \perturbrv_2} \eqd
  \sqrt{1+\smoothparam^2} \sqrt{T_\dim}$ and $\ltwo{\perturbrv_1}
  \eqd \sqrt{T_\dim}$. Using
  the inequalities $\sqrt{1+\smoothparam^2} \le 1 + \half \smoothparam^2$
  and $\E[\sqrt{T_\dim}] \le \E[T_\dim]^\half = \sqrt{\dim}$, we
  obtain
  \begin{equation*}
    \E\left[\ltwo{\perturbrv_1 + \smoothparam \perturbrv_2}
      - \ltwo{\perturbrv_1}\right]
    = \left(\sqrt{1+\smoothparam^2}-1\right) \E[\sqrt{T_\dim}]
    \le \half \smoothparam^2 \sqrt{\dim}.
  \end{equation*}
\item By Assumption~\ref{assumption:smoothing-dist-general},
  if $\perturbrv_1$ is uniform on $\sqrt{\dim+2}\,\B^d$ then
  $\perturbrv_2$ has either $\uniform(\sqrt{\dim+2}\,\B^d)$
  or $\uniform(\sqrt{\dim}\,\S^{\dim-1})$ distribution. Using the
  inequality $\sqrt{a+b} - \sqrt{a} \le b/(2\sqrt{a})$,
  valid for $a \ge 0$ and $b \ge -a$, we may write
  \begin{align*}
    \ltwo{\perturbrv_1 + \smoothparam \perturbrv_2} - \ltwo{\perturbrv_1}
    & = \sqrt{\ltwo{\perturbrv_1}^2 + 2\smoothparam \< \perturbrv_1, 
      \perturbrv_2 \> + \smoothparam^2 \ltwo{\perturbrv_2}^2}
    - \sqrt{\ltwo{\perturbrv_1}^2} \\
    & \le \frac{2\smoothparam \< \perturbrv_1, \perturbrv_2 \>
      + \smoothparam^2 \ltwo{\perturbrv_2}^2}{2\ltwo{\perturbrv_1}}
    = \smoothparam \< \frac{\perturbrv_1}{\ltwo{\perturbrv_1}}, \,
    \perturbrv_2 \> + \half \smoothparam^2
    \frac{\ltwo{\perturbrv_2}^2}{\ltwo{\perturbrv_1}}.
  \end{align*}
  Since $\perturbrv_1$ and $\perturbrv_2$ are independent and
  $\E[\perturbrv_2] = 0$, the expectation of the first term on the
  right hand side above vanishes. For the second term,
  the independence of $\perturbrv_1$ and $\perturbrv_2$
  and moment calculation~\eqref{eqn:moment-ball} imply
  \begin{equation*}
    \E\left[\ltwo{\perturbrv_1 + \smoothparam \perturbrv_2}
      - \ltwo{\perturbrv_1}\right]
    \le \half \smoothparam^2
    \,\E\left[\frac{1}{\ltwo{\perturbrv_1}}\right]
    \,\E\left[\ltwo{\perturbrv_2}^2\right]
    = \half \smoothparam^2\cdot\frac{1}{\sqrt{\dim+2}}\,
    \frac{\dim}{(\dim-1)}\cdot\dim
    \le \frac{1}{\sqrt{2}} \smoothparam^2 \sqrt{\dim},
  \end{equation*}
  where the last inequality holds for $d \ge 2$.
\end{enumerate}
\noindent
We thus obtain the claim~\eqref{eqn:bound-expectation-diff}, and
applying inequality~\eqref{eqn:bound-expectation-diff} to our earlier
computation~\eqref{eq:bound-eta-to-k} yields
\begin{equation*}
  \E[\difffunc_\smoothparam(\perturbrv_1)^k]
  \le \frac{1}{\sqrt{2}} \smoothparam^{k+1} \dim^{\frac{k}{2}}.
\end{equation*}
Plugging in this bound on $\difffunc_\smoothparam$ to
Lemma~\ref{lemma:concentration-h}, we obtain the result
\begin{align*}
  \E\left[\left|h(\perturbrv_1 + \smoothparam \perturbrv_2) -
    h(\perturbrv_1)\right|^k\right]
  &\le 2^{k-\frac{3}{2}} \smoothparam^{k+1} \dim^{\frac{k}{2}}
  + c^{-\frac{k}{2}} 2^{k-1} k^{\frac{k}{2}}
  \smoothparam^k \log^{\frac{k}{2}} (\dim+2k)
  + \sqrt{2} \smoothparam^k \\
  &\le c_k \smoothparam^k \left[
    \smoothparam \dim^{\frac{k}{2}}
    + 1 + \log^{\frac{k}{2}}(\dim+2k) \right],
\end{align*}
where $c_k$ is a numerical constant that only depends on $k$.
This is the desired statement of Lemma~\ref{lemma:moment-bound}.

We now return to prove the remaining intermediate lemma.
\paragraph{Proof of Lemma~\ref{lemma:supremum-achieved-zero}}
Since the quantity $\ltwo{\perturbrv_1 + \smoothparam \perturbrv_2}$ has a
density with respect to Lebesgue measure, standard results on differentiating
through an expectation~\cite[e.g.,][]{Bertsekas73} imply
\begin{equation*}
  \frac{d}{da} \E\left[\left|\ltwo{\perturbrv_1 + \smoothparam \perturbrv_2}
    - a \right| \right]
  = \E[\sign(a - \ltwo{\perturbrv_1 + \smoothparam \perturbrv_2})]
  = \P(\ltwo{\perturbrv_1 + \smoothparam \perturbrv_2} \le a)
  - \P(\ltwo{\perturbrv_1 + \smoothparam \perturbrv_2} > a),
\end{equation*}
where we used that the subdifferential of $a \mapsto |v - a|$ is
$\sign(a - v)$. As a consequence, we find that
\begin{align}
  \frac{d}{da} \zeta(a)
  & = \P(\ltwo{\perturbrv_1 + \smoothparam \perturbrv_2} \le a)
  - \P(\ltwo{\perturbrv_1 + \smoothparam \perturbrv_2} > a)
  - \P(\ltwo{\perturbrv_1} \le a) + \P(\ltwo{\perturbrv_1} > a)
  \nonumber \\
  & = 2\left[ \P\left(\ltwo{\perturbrv_1 + \smoothparam \perturbrv_2}
    \le a\right) 
    - \P\left(\ltwo{\perturbrv_1} \le a\right) \right].
  \label{eqn:derivative-zeta}
\end{align}
If we can show the quantity~\eqref{eqn:derivative-zeta} is non-positive
for all $a$, we obtain our desired result. It thus remains
to prove that $\ltwo{\perturbrv_1 + \smoothparam \perturbrv_2}$
stochastically dominates $\ltwo{\perturbrv_1}$ for each choice of
$\smoothingdist_1,\smoothingdist_2$ satisfying
Assumption~\ref{assumption:smoothing-dist-general}. We enumerate each
of the cases below.

\begin{enumerate}
\item 
  Let $T_d$ have $\chi^2$-distribution with $d$ degrees of freedom and
  $\perturbrv_1, \perturbrv_2 \sim \normal(0,I_{\dim \times \dim})$.  Then by
  definition we have $\ltwo{\perturbrv_1 + \smoothparam \perturbrv_2} \eqd
  \sqrt{1+\smoothparam^2} \sqrt{T_\dim}$ and $\ltwo{\perturbrv_1} \eqd
  \sqrt{T_\dim}$, and
  \begin{equation*}
    \P\left(\ltwo{\perturbrv_1 + \smoothparam \perturbrv_2}
    \le a\right)
    = \P\left(\sqrt{T_d} \le \frac{a}{\sqrt{1+\smoothparam^2}}\right)
    \le \P\left(\sqrt{T_d} \le a\right)
    = \P\left(\ltwo{\perturbrv_1} \le a\right)
  \end{equation*}
  as desired.
\item Now suppose $\perturbrv_1, \perturbrv_2$ are independent
  and distributed as $\uniform(r\,\B^\dim)$;
  our desired result will follow by setting $r = \sqrt{\dim+2}$.
  Let $p_0(t)$ and $p_\smoothparam(t)$ denote the densities of
  $\ltwo{\perturbrv_1}$ and $\ltwo{\perturbrv_1 + \smoothparam
    \perturbrv_2}$, respectively, with respect to Lebesgue measure on
  $\R$. We now compute them explicitly. For $p_0$, for $0 \le t \le r$
  we have
  \begin{equation*}
    p_0(t) = \frac{d}{dt} \P(\ltwo{\perturbrv_1} \le t)
    = \frac{d}{dt} \left(\frac{t}{r}\right)^\dim
    = \frac{\dim\,t^{\dim-1}}{r^\dim},
  \end{equation*}
  and $p_0(t) = 0$ otherwise. For $p_\smoothparam$, let $\lambda$ denote
  the Lebesgue measure in $\R^\dim$ and $\sigma$ denote the
  $(\dim-1)$-dimensional surface area in $\R^\dim$.
  The random variables $\perturbrv_1$ and $\smoothparam \perturbrv_2$
  have densities, respectively,
  \begin{equation*}
    q_1(x) = \frac{1}{\lambda(r\,\B^\dim)}
    = \frac{1}{r^\dim \lambda(\B^\dim)}
    \quad \text{ for } x \in r \B^\dim
  \end{equation*}
  and
  \begin{equation*}
    q_\smoothparam(x) = \frac{1}{\lambda(\smoothparam r \,\B^\dim)}
    = \frac{1}{\smoothparam^\dim r^\dim \lambda(\B^\dim)}
    \quad \text{ for } x \in \smoothparam r \B^\dim,
  \end{equation*}
  and $q_1(x) = q_\smoothparam(x) = 0$ otherwise. Then the density of
  $\perturbrv_1 + \smoothparam \perturbrv_2$ is given by the convolution
  \begin{align*}
    \tilde q(z)
    = \int_{\R^d} q_1(x) q_\smoothparam(z-x) \: \lambda(dx)
    = \int_{E(z)} \frac{1}{r^\dim \lambda(\B^\dim)} \cdot
    \frac{1}{\smoothparam^\dim r^\dim \lambda(\B^\dim)}
    \: \lambda(dx)
    = \frac{\lambda(E(z))}{\smoothparam^\dim\,r^{2\dim}
      \lambda(\B^\dim)^2}.
  \end{align*}
  Here $E(z) \defeq \B^\dim(0,r) \cap \B^\dim(z, \smoothparam r)$
  is the domain of integration, in which the densities $q_1(x)$ and
  $q_\smoothparam(z-x)$ are nonzero. The volume 
  $\lambda(E(z))$---and hence also $\tilde q(z)$---depend on $z$ only
  via its norm $\ltwo{z}$. Therefore, the density $p_\smoothparam(t)$
  of $\ltwo{\perturbrv_1 + \smoothparam \perturbrv_2}$ can be expressed
  as
  \begin{align*}
    p_\smoothparam(t)
    = \tilde q(t\basis_1)\,\sigma(t\S^{\dim-1})
    = \frac{\lambda(E(t\basis_1))\,t^{\dim-1}\,\sigma(\S^{\dim-1})}{
      \smoothparam^\dim\,r^{2\dim}\,\lambda(\B^\dim)^2}
    %     = \frac{\lambda(E(t\basis_1))\,t^{\dim-1} \dim \lambda(\B^\dim)}{
    %       \smoothparam^\dim\,r^{2\dim}\,\lambda(\B^d)^2}
    = \dim \, \frac{\lambda(E(t\basis_1))\,t^{\dim-1}}
    {\smoothparam^\dim\,r^{2\dim}\,\lambda(\B^\dim)},
  \end{align*}
  where the last equality above follows from the relation
  $\sigma(\S^{\dim-1}) = \dim \lambda(\B^\dim)$.
  Since $E(t\basis_1) \subseteq \B^\dim(t\basis_1,\smoothparam r)$
  by definition,
  \begin{equation*}
    \lambda(E(t\basis_1))
    \le \lambda\left(\B^\dim(t\basis_1,\smoothparam r)\right)
    = \smoothparam^\dim r^\dim \,\lambda(\B^\dim),
  \end{equation*}
  so for all $0 \le t \le (1+\smoothparam)r$ we have
  \begin{equation*}
    p_\smoothparam(t)
    = \dim \, \frac{\lambda(E(t\basis_1))\,t^{\dim-1}}
    {\smoothparam^\dim\,r^{2\dim}\,\lambda(\B^\dim)}
    \le \frac{\dim\,t^{\dim-1}}{r^\dim},
  \end{equation*}
  and clearly $p_\smoothparam(t) = 0$ for $t > (1+\smoothparam)r$.
  In particular, $p_\smoothparam(t) \le p_1(t)$ for $0 \le t \le r$,
  which gives us our desired stochastic dominance
  inequality~\eqref{eqn:derivative-zeta}: for $a \in [0, r]$,
  \begin{equation*}
    \P(\ltwo{\perturbrv_1 + \smoothparam \perturbrv_2} \le a)
    = \int_0^a p_\smoothparam(t)\,dt
    \le \int_0^a p_0(t)\,dt
    = \P(\ltwo{\perturbrv_1} \le a),
  \end{equation*}
  and for $a > r$ we have $\P(\ltwo{\perturbrv_1 + \smoothparam
    \perturbrv_2} \le a) \le 1 = \P(\ltwo{\perturbrv_1} \le a)$.
\item Finally, consider the case when $\perturbrv_1 \sim
  \uniform(\sqrt{\dim+2}\,\B^d)$ and $\perturbrv_2 \sim
  \uniform(\sqrt{\dim}\,\S^{\dim-1})$. As in the previous case,
  we will show that $p_0(t) \le p_\smoothparam(t)$ for $0 \le t \le
  \sqrt{\dim+2}$, where $p_0(t)$ and $p_\smoothparam(t)$ are the
  densities of $\ltwo{\perturbrv_1}$ and $\ltwo{\perturbrv_1 + \smoothparam
    \perturbrv_2}$, respectively. We know that the density of
  $\ltwo{\perturbrv_1}$ is
  \begin{equation*}
    p_0(t) = \frac{\dim\,t^{\dim-1}}{(\dim+2)^{\frac{\dim}{2}}}
    \quad \text{ for } 0 \le t \le \sqrt{\dim+2},
  \end{equation*}
  and $p_0(t) = 0$ otherwise. To compute $p_\smoothparam$, we first
  determine the density $\tilde q(z)$ of the random variable $\perturbrv_1 +
  \smoothparam \perturbrv_2$ with respect to the Lebesgue measure $\lambda$
  on $\R^\dim$. The usual convolution formula does not directly apply as
  $\perturbrv_1$ and $\perturbrv_2$ have densities with respect to different
  base measures ($\lambda$ and $\sigma$, respectively). However, as
  $\perturbrv_1$ and $\perturbrv_2$ are both uniform, we can argue as
  follows. Integrating over the surface $\smoothparam \sqrt{\dim} \S^{\dim -
    1}$ (essentially performing a convolution), each point $\smoothparam y
  \in \smoothparam \sqrt{\dim}\, \S^{\dim-1}$ contributes the amount
  \begin{equation*}
    \frac{1}{\sigma(\smoothparam \sqrt{\dim}\,\S^{\dim-1})} \cdot
    \frac{1}{\lambda(\sqrt{\dim+2}\,\B^\dim)}
    = \frac{1}{\smoothparam^{\dim-1}\,\dim^{\frac{\dim-1}{2}}\,
      (\dim+2)^{\frac{\dim}{2}}\,\sigma(\S^{\dim-1})\,\lambda(\B^\dim)}
  \end{equation*}
  to the density $\tilde q(z)$, provided $\ltwo{z-\smoothparam y} \le
  \sqrt{\dim+2}$. For fixed $z \in (\sqrt{\dim + 2} + \smoothparam
  \sqrt{\dim}) \B^\dim$, the set of such contributing points $uy$ can be
  written as $E(z) = \B^\dim(z,\sqrt{\dim+2}) \cap
  \S^{\dim-1}(0,u\sqrt{\dim})$. Therefore, the density of $\perturbrv_1 +
  \smoothparam \perturbrv_2$ is given by
  \begin{equation*}
    \tilde q(z) = \frac{\sigma(E(z))}
           {\smoothparam^{\dim-1}\,\dim^{\frac{\dim-1}{2}}\,
             (\dim+2)^{\frac{\dim}{2}}\,\sigma(\S^{\dim-1})\,\lambda(\B^\dim)}.
  \end{equation*}
  Since $\tilde q(z)$ only depends on $z$ via its norm $\ltwo{z}$,
  the formula above also gives us the density $p_\smoothparam(t)$
  of $\ltwo{\perturbrv_1 + \smoothparam \perturbrv_2}$:
  \begin{equation*}
    p_\smoothparam(t) = \tilde q(t\basis_1) \, \sigma(t\S^{\dim-1})
    = \frac{\sigma(E(z))\,t^{\dim-1}}
    {\smoothparam^{\dim-1}\,\dim^{\frac{\dim-1}{2}}\,
      (\dim+2)^{\frac{\dim}{2}}\,\lambda(\B^\dim)}.
  \end{equation*}
  Noting that $E(z) \subseteq \S^{\dim-1}(0,\smoothparam \sqrt{\dim})$
  gives us
  \begin{equation*}
    p_\smoothparam(t)
    \le \frac{\sigma(\smoothparam \sqrt{\dim}\,\S^{\dim-1})\,t^{\dim-1}}
        {\smoothparam^{\dim-1}\,\dim^{\frac{\dim-1}{2}}\,
          (\dim+2)^{\frac{\dim}{2}}\,\lambda(\B^\dim)}
        = \frac{\dim\,t^{\dim-1}}{(\dim+2)^{\frac{\dim}{2}}}.
  \end{equation*}
  In particular, we have $p_\smoothparam(t) \le p_0(t)$ for $0 \le t
  \le \sqrt{\dim+2}$, which, as we saw in the previous case, gives us
  the desired stochastic dominance
  inequality~\eqref{eqn:derivative-zeta}.
\end{enumerate}

% Local Variables:
% TeX-master: "zero-order"
% End:

\section{Technical proofs associated with lower bounds}
\label{sec:proof-lower-bound}

In this section, we prove the technical results necessary
for the proofs of Propositions~\ref{proposition:ltwo-lower-bound}
and~\ref{proposition:lone-lower-bound}.

\subsection{Proof of Lemma~\ref{lemma:onevec-optimization}}
\label{sec:proof-onevec-optimization}

First, note that the optimal vector $\optvar^A = -d^{-1/q}\onevec$ with
optimal value $-d^{1 - 1/q}$, and $\optvar^B = -(d-i)^{-1/q} \onevec_{i+1:d}$,
where $\onevec_{i+1:d}$ denotes the vector with 0 entries in its first $i$
coordinates and $1$ elsewhere. As a consequence, we have $\<\optvar^B,
\onevec\> = -(d - i)^{1-1/q}$. Now we use the fact that by convexity of the
function $x \mapsto -x^{1 - 1/q}$ for $q \in [1, \infty]$,
\begin{equation*}
  - d^{1 - 1/q} \le - (d - i)^{1 - 1/q}
  - \frac{1 - 1/q}{d^{1/q}} i,
\end{equation*}
since the derivative of $x \mapsto -x^{1-1/q}$ at $x = d$ is given by $-(1 -
1/q) / d^{1/q}$ and the quantity $-x^{1-1/q}$ is non-increasing in $x$ for $q
\in [1, \infty]$.

\subsection{Proof of Lemma~\ref{lemma:coordinate-tv-bound}}
\label{sec:proof-coordinate-tv-bound}

For notational convenience, let the distribution $\statprob_{\packval,+j}$ be
identical to the distribution $\statprob_\packval$ but with the $j$th
coordinate $\packval_j$ forced to be $+1$ and similarly for
$\statprob_{\packval,-j}$. Using Pinsker's inequality and the joint convexity
of the KL-divergence, we have
\begin{equation*}
\begin{split}
  \tvnorm{\statprob_{+j} - \statprob_{-j}}^2
  &\le \frac{1}{4} \left[\dkl{\statprob_{+j}}{\statprob_{-j}}
  + \dkl{\statprob_{-j}}{\statprob_{+j}}\right] \\
  &\le \frac{1}{2^{d+2}} \sum_{\packval \in \packset}
  \left[\dkl{\statprob_{\packval,+j}}{\statprob_{\packval,-j}}
  + \dkl{\statprob_{\packval,-j}}{\statprob_{\packval,+j}}\right].
\end{split}
\end{equation*}
By the chain-rule for KL-divergences~\cite{CoverTh06}, if we define
$\statprob_\packval^\iter(\cdot \mid \obs[1:\iter-1])$ to be the distribution
of the $\iter^{\text{th}}$ observation $\obs[\iter]$ conditional on
$\packval$ and $\obs[1:\iter-1]$, then we have
\begin{equation*}
  \dkl{\statprob_{\packval,+j}}{\statprob_{\packval,-j}}
  = \sum_{\iter = 1}^\totaliter
  \int_{\obsdomain^{\iter-1}} \dkl{\statprob^\iter_{\packval,+j}(\cdot
    \mid \obs[1:\iter-1] = \obsval)}{\statprob^\iter_{\packval,-j}(\cdot
    \mid \obs[1:\iter-1] = \obsval)} d \statprob_{\packval,+j}(\obsval).
\end{equation*}

We show how to bound the preceding sequence of KL-divergences for the
observational scheme based on function-evaluations we allow.  Let $\pairoptvar
= [\optvar ~ \altoptvar] \in \R^{d \times 2}$ denote the pair of query points,
so we have by construction that the observation $\obs = \pairoptvar^\top
\statrv$ where $\statrv \mid \packrv = \packval \sim \normal(\delta \packval,
\stddev^2 I_{d \times d})$.  In particular, given $\packval$ and the pair
$\pairoptvar$, the vector $\obs \in \R^d$ is normally distributed with mean
$\delta \pairoptvar^\top \packval$ and covariance $\stddev^2 \pairoptvar^\top
\pairoptvar = \Sigma$, where the covariance $\Sigma$ is defined in
equation~\eqref{eqn:compute-covariance}.
The KL divergence between normal
distributions is $\dkl{\normal(\mu_1, \Sigma)}{\normal(\mu_2, \Sigma)} = \half
(\mu_1 - \mu_2)^\top \Sigma^{-1} (\mu_1 - \mu_2)$. Note that if
$\packval$ and
$\altpackval$ differ in only coordinate $j$, then $\<\packval - \altpackval,
\optvar\> = (\packval_j - \altpackval_j)\optvar_j$. We thus obtain
\begin{equation*}
  \dkl{\statprob^\iter_{\packval,+j}(\cdot \mid \obsval[1:\iter-1])}{
    \statprob^\iter_{\packval,-j}(\cdot \mid \obsval[1:\iter-1])}
  \le 2 \delta^2 \E\left[
    \left[\begin{matrix} \optvar[\iter]_j \\ \altoptvar[\iter]_j
      \end{matrix}\right]^\top
    (\Sigma^\iter)^{-1}
    \left[\begin{matrix} \optvar[\iter]_j \\ \altoptvar[\iter]_j
      \end{matrix}\right]
    \mid \obsval[1:\iter-1]\right]
\end{equation*}
where the expectation is taken with respect to any additional randomness in
the construction of the pair $(\optvar[\iter], \altoptvar[\iter])$ (as,
aside from this randomness, they are measureable $\obs[1:\totaliter-1]$). We
obtain an identical bound for $\dkls{\statprob^\iter_{\packval,-j}(\cdot
  \mid \obsval[1:\iter-1])}{\statprob^\iter_{\packval,+j}(\cdot \mid
  \obsval[1:\iter-1])}$. Combining the sequence of inequalities from the
preceding paragraph, we thus obtain
\begin{align*}
  \tvnorm{\statprob_{+j} - \statprob_{-j}}^2
  & \le \frac{\delta^2}{2^{d+1}} \sum_{\iter = 1}^\totaliter
  \sum_{\packval \in \packset}
  \int_{\obsdomain^{\iter - 1}} 
  \E\left[
    \left[\begin{matrix} \optvar[\iter]_j \\ \altoptvar[\iter]_j
      \end{matrix}\right]^\top
    (\Sigma^\iter)^{-1}
    \left[\begin{matrix} \optvar[\iter]_j \\ \altoptvar[\iter]_j
      \end{matrix}\right]
    \mid \obsval[1:\iter-1]\right]
  (d\statprob_{\packval,+j}(\obsval[1:\iter-1])
  + d\statprob_{\packval,-j}(\obsval[1:\iter-1])) \nonumber \\
  & = \frac{\delta^2}{2} \sum_{\iter=1}^\totaliter
  \int_{\obsdomain^{\iter-1}}
  \E\left[
    \left[\begin{matrix} \optvar[\iter]_j \\ \altoptvar[\iter]_j
      \end{matrix}\right]^\top
    (\Sigma^\iter)^{-1}
    \left[\begin{matrix} \optvar[\iter]_j \\ \altoptvar[\iter]_j
      \end{matrix}\right]
    \mid \obsval[1:\iter-1]\right]
  \left(d\statprob_{+j}(\obsval[1:\iter-1])
  + d\statprob_{-j}(\obsval[1:\iter-1])\right),
\end{align*}
where for the equality we used the definitions of
the distributions $\statprob_{\packval,\pm j}$ and $\statprob_{\pm j}$.
Integrating over the observations $\obsval$ proves
the claimed inequality~\eqref{eqn:super-j-tv-bound}.

{\small
\bibliographystyle{abbrvnat}
\bibliography{bib}
}

\end{document}